\newtheorem{theorem}{Theorem}[section]
\newtheorem{lemma}[theorem]{Lemma}
\newtheorem{proposition}[theorem]{Proposition}
\newtheorem{corollary}[theorem]{Corollary}
\newenvironment{proof}[1][Proof]{\begin{trivlist}
\item[\hskip \labelsep {\bfseries #1}]}{\end{trivlist}}
\newenvironment{definition}[1][Definition]{\begin{trivlist}
\item[\hskip \labelsep {\bfseries #1}]}{\end{trivlist}}
\newenvironment{example}[1][Example]{\begin{trivlist}
\item[\hskip \labelsep {\bfseries #1}]}{\end{trivlist}}
\newenvironment{remark}[1][Remark]{\begin{trivlist}
\item[\hskip \labelsep {\bfseries #1}]}{\end{trivlist}}
\newcommand{\qed}{\nobreak \ifvmode \relax \else
      \ifdim\lastskip<1.5em \hskip-\lastskip
      \hskip1.5em plus0em minus0.5em \fi \nobreak
      \vrule height0.75em width0.5em depth0.25em\fi}
\DeclareMathOperator{\End}{End}
\DeclareMathOperator{\Hom}{Hom}
\DeclareMathOperator{\id}{id}
\DeclareMathOperator{\Sym}{Sym}
\begin{document}

\begin{titlepage}
\title{Universal measuring coalgebras and R - transformation algebras}
\author{M. Batchelor$^a$ \and J. Thomas$^b$ \footnote{Supported by a Graduate Research Fellowship from the National Science Foundation, USA.}}

\maketitle

\begin{center}
$^a$ Corresponding Author\\
Dept. of Pure Mathematics and Mathematical Statistics,\\
Centre for Mathematical Sciences, University of Cambridge,\\
Wilberforce Road, Cambridge, CB3 0WB, United Kingdom\\
Tel:  +44 (0) 1223 765896, Fax: +44 (0) 1223 337920 , Email: mb139@cam.ac.uk
\end{center}

\begin{center}
$^b$ Email: jwthomas@post.harvard.edu
\end{center}

\bigskip
\begin{abstract}
\medskip
Universal measuring coalgebras provide an enrichment of the category of algebras over the category of coalgebras.  By considering the special case of the tensor algebra on a vector space $V$, the category of linear spaces itself becomes enriched over coalgebras, and the universal measuring coalgebra is the dual coalgebra of the tensor algebra $T(V\otimes V^*)$.    Given a braiding $R$ on $V$  the universal measuring coalgebra $P_R(V)$  which preserves the grading is naturally dual to the Fadeev-Takhtadjhan-Reshitikin bialgebra $A(R)$ and therefore provides a representation of the quantized universal enveloping algebra as an algebra of transformations.  The action of $P_R(V)$ descends to actions on quotients of the tensor algebra, whenever the kernel of the quotient map is preserved by the action of a generating subcoalgebra of $P_R(V)$.  This allows representations of quantized enveloping algebras as transformation groups of suitably quantized spaces.  

\bigskip
Keywords: quantum groups, enveloping algebras, braidings, measuring coalgebras

\end{abstract}
\end{titlepage}

\doublespacing

The technology of measuring coalgebras provides an enrichment of the category of algebras over coalgebras.  For forty years (\cite{kostant, marjmanifolds}) this technology has been employed in the spirit of algebraic geometry, to extend structures arising in differential topology to categories where the differential structure has been replaced by algebraic structure.  Chief among these applications have been to recover geometric interpretations of supermanifolds and Lie superalgebras.  More recently there have been attempts to interpret quantum groups in this framework \cite{majid, marjrapallo, marjadvances}.

The technique centres on the universal measuring coalgebra $P(A,B)$, which is a coalgebra that compares the structure of algebras $A$ and $B$.  Initially we hoped to represent quantized enveloping algebras as sub-bialgebras of universal measuring coalgebras $P(A,A)$, thus acting as endomorphisms of suitable ``function" algebras $A$.  This is the quantum version of the alternative construction of the universal enveloping algebra from a representation of the Lie algebra as derivations of $A$ \cite{marjrapallo, marjadvances}

In developing the technology to do this, a far simpler truth emerged.  The universal measuring coalgebra provides a seemingly harmless enrichment of the category of vector spaces over coalgebras.  Yet when $V$ is a vector space with a braiding $R: V \otimes V \to V \otimes V$, we can ask that the universal measuring coalgebra respects this additional structure.  The resulting ``$R$-transformation algebra" $P_R(V)$ is a bialgebra with the following desirable properties:

\begin{enumerate}
	\item $P_R(V)$ is defined by a universal property
	\item If $V$ is finite-dimensional, then $P_R(V)$ is canonically isomorphic to the dual of the Faddeev-Reshetikhin-Takhtajan (FRT) bialgebra $A(R)$.
	\item If V is a vector space on which the generators of a quantized enveloping algebra act appropriately, then $P_R(V)$ contains the quantized enveloping algebra.
	\item If $A = TV/J$ is an algebra on which the generators of a quantized enveloping algebra act appropriately, there is a homomorphism of bialgebras $P_R(V)$ to $P(A,A)$.
\end{enumerate}

Described in this setting, the quantized enveloping algebras are exactly transformations of a vector space which preserve a braiding, just as the orthogonal or symplectic algebras are transformations which preserve a form.

The plan of the paper is to present the minimum material on measuring coalgebras required to describe the construction, state the results concerning the representation of enveloping algebras as measuring coalgebras, and state and prove the main theorem which is the isomorphism of $P_R(V)$ with $A(R)^\circ$, the finite dual of the FRT bialgebra.  The proofs of the results on representations are contained in sections \ref{classicalembedproof} and \ref{quantizedalgebras}. The necessary results about measuring coalgebras are included in the appendix \ref{universalmeasuringcoalgebra}.  The sections are as follows:

\begin{enumerate}
	\item Basic definitions of measuring coalgebras.
	\item Applications to classical and quantized enveloping algebras.
	\item Measuring coalgebras and tensor algebras and their quotients.
	\item Definition and properties of $R$-transformation algebras.
	\item The representations of $U\mathfrak{g}$ in $P(A,A) $.
	\item The representations of $U_q\mathfrak {g}$ in  $P(A,A)$.
	\item Appendix  Properties of measuring coalgebras.
	\end{enumerate}

We would like to thank Martin Hyland for many helpful discussions.  We would also like to recognize the contribution of Ben Fairbairn. His extensive calculations in the preliminary stages of this work provided us with the experience on which to base our understanding.

\section{ Measuring coalgebras}

\subsection{Definitions} Let $C$ be a coalgebra.  We will use Sweedler's notation \cite{sweedler}, so the the comultiplication 

\[
 \triangle : C \to C\otimes C
  \]

is denoted

\[ 
\triangle c = \sum_{(c)} c_{(1)}\otimes c_{(2).}
\]

Let $A$, $B$ be algebras (over a field $k$ which will be either $\mathbb{R}$ or $\mathbb{C}$ throughout the paper).  A map $\sigma : C \to \Hom_k(A,B)$ is called a \emph{measuring map} if 

\[ \sigma (c) (aa') =  \sum_{(c)} \sigma (c_{(1)})(a)\sigma (c_{(2)})(a'),  \ \ \sigma (c) (1_A) = \epsilon (c)1_B\]
and
\[
\sigma (c)(1_A) = \epsilon (c)1_B
\]

for $a$, $a'$ in $A$ and $\epsilon $ the counit.  This is equivalent to requiring that the map

\[ \sigma : A \to \Hom(C,B)\]

is an algebra homomorphism, where the algebra structure on $\Hom(C,B)$ is the convolution product determined by the comultiplication on $C$.

$C$ together with $\sigma$ is called a \emph{measuring coalgebra}. 

If $(C',\sigma ')$ is another measuring coalgebra, a coalgebra map $ \rho :C \to C' $ is a \emph{morphism} of measuring coalgebras if  $\sigma = \sigma' \circ \rho $.  The category of measuring coalgebras for a pair of algebras (A,B) has a final object called the \emph{universal measuring coalgebra}.

\begin{proposition} \label{meascondition}Given a pair of algebras (A,B), there exists a measuring coalgebra P(A,B)
\[
\pi:P(A,B)\to \Hom(A,B)
\]
such that if $\sigma :C \to \Hom(A,B)$ is any measuring coalgebra, there exists a unique map $\rho :P(A,B) \to C$ such that $\pi \circ \rho = \sigma$.
\end{proposition}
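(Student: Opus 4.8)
The plan is to realize $P(A,B)$ as the largest subcoalgebra of the cofree coalgebra on the vector space $\Hom(A,B)$ on which the canonical map is forced to measure. The one external ingredient I would import is the existence of the cofree coalgebra: for any vector space $V$ there is a coalgebra $\mathcal{C}(V)$ together with a linear map $\pi_V : \mathcal{C}(V) \to V$ such that every linear map $f : D \to V$ out of a coalgebra $D$ factors as $f = \pi_V \circ \tilde f$ for a unique coalgebra morphism $\tilde f : D \to \mathcal{C}(V)$. This is the fact I would isolate, and either cite from Sweedler or establish in the appendix. Applying it with $V = \Hom(A,B)$ produces a universal object $\mathcal{C} := \mathcal{C}(\Hom(A,B))$ with projection $\pi := \pi_V$, and the whole construction takes place inside $\mathcal{C}$.

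Next I would define $P(A,B)$ to be the sum of all subcoalgebras $C' \subseteq \mathcal{C}$ on which $\pi$ restricts to a measuring map. A sum of subcoalgebras is again a subcoalgebra, so the only thing to verify is that $\pi$ still measures on $P(A,B)$ itself. Here the key observation is that, for fixed $a,a' \in A$, both sides of the defining identity are \emph{linear} in the coalgebra variable: the assignments $c \mapsto \pi(c)(aa')$ and $c \mapsto \sum_{(c)} \pi(c_{(1)})(a)\,\pi(c_{(2)})(a')$ are linear functionals on $\mathcal{C}$ (the latter is a composite of $\triangle$, $\pi$, and the multiplication of $B$; and the Sweedler components of an element of a subcoalgebra stay inside that subcoalgebra, so no cross terms appear), and the same applies to the counit identity $\pi(c)(1_A) = \epsilon(c)1_B$. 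Since these linear identities hold on each summand, they hold on their sum, so $P(A,B)$ is indeed the largest measuring subcoalgebra of $\mathcal{C}$.

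To verify the universal property, take any measuring coalgebra $\sigma : C \to \Hom(A,B)$ and lift its underlying linear map through the cofree property to the unique coalgebra morphism $\tilde\sigma : C \to \mathcal{C}$ with $\pi \circ \tilde\sigma = \sigma$. The image $\tilde\sigma(C)$ is a subcoalgebra, and since $\tilde\sigma$ is a surjection of coalgebras onto it with $\pi \circ \tilde\sigma = \sigma$ measuring, a short computation using compatibility of $\tilde\sigma$ with comultiplication shows that $\pi$ measures on $\tilde\sigma(C)$; hence $\tilde\sigma(C) \subseteq P(A,B)$. Corestricting yields the required $\rho : C \to P(A,B)$ with $\pi \circ \rho = \sigma$. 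Uniqueness is inherited directly from the cofree property: any $\rho'$ with $\pi \circ \rho' = \sigma$ composes with the inclusion $P(A,B) \hookrightarrow \mathcal{C}$ to give a lift of $\sigma$, which must coincide with $\tilde\sigma$, and injectivity of the inclusion forces $\rho' = \rho$.

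The genuinely substantial step is the existence of the cofree coalgebra $\mathcal{C}(V)$; everything after that is bookkeeping. I expect to treat it separately, either by citation or by the standard construction that assembles $\mathcal{C}(V)$ from the finite-dimensional coalgebras mapping to $V$ (equivalently, from dualizing finite-dimensional quotients of the free algebra), invoking the fundamental theorem that every coalgebra is the directed union of its finite-dimensional subcoalgebras. The only other point demanding care is the linearity argument above — that the measuring identities, being equalities of linear functionals in the coalgebra variable, survive passage to sums of subcoalgebras — but this is routine once it is phrased correctly.
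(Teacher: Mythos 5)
Your proof is correct, but it takes a genuinely different route from the paper's. The paper builds $P(A,B)$ from below: it considers the family of all finite-dimensional measuring coalgebras $(C_\mu,\sigma_\mu)$, observes that their coproduct again measures, and defines $P(A,B)$ as the coequalizer identifying each finite-dimensional measuring coalgebra with its images under measuring maps --- a colimit construction whose universal property rests on Theorem \ref{finite} (every element of a coalgebra lies in a finite-dimensional subcoalgebra), since an arbitrary measuring coalgebra is the directed union of its finite-dimensional measuring subcoalgebras, each of which maps canonically into the colimit. You instead build $P(A,B)$ from above, as the largest subcoalgebra of the cofree coalgebra on $\Hom(A,B)$ on which the canonical projection measures; this is Sweedler's original construction. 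Your key steps are sound: the sum of measuring subcoalgebras measures because both sides of the measuring identity are linear functionals in the coalgebra variable and Sweedler components of an element of a subcoalgebra stay inside it, and existence and uniqueness of $\rho$ are inherited directly from cofreeness (you also correctly read the proposition as asserting finality, $\rho : C \to P(A,B)$, which is the only direction in which $\pi\circ\rho=\sigma$ typechecks, despite the misprinted arrow in the statement). The trade-offs: your route makes the universal property nearly automatic and sidesteps a set-theoretic wrinkle the paper glosses over --- ``the set of all finite-dimensional measuring coalgebras'' is really a proper class needing a choice of representatives, whereas the subcoalgebras of one fixed cofree coalgebra form an honest set --- but it outsources the substantial work to the existence of cofree coalgebras, a result of comparable depth whose standard proof uses the same finiteness theorem. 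The paper's approach, given Theorem \ref{finite}, is self-contained and exhibits $P(A,B)$ explicitly as a colimit of finite-dimensional measuring coalgebras, a description the appendix later exploits when relating $P(A,B)$ to Majid's universal comeasuring algebra via $M(A,B)^\circ \cong P(A,B)$.
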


The proofs are deferred to Appendix A.  The functorial properties are summarized in the following theorem.

\begin{theorem} \label{univmeasproperties}
\begin{enumerate}
		\item If $A, B, C$ are algebras, there is a map of coalgebras 
		\[
		P(A,B)\otimes P(B,C) \to P(A,C).
		\]
		\item The universal measuring coalgebra $P(*,*)$ is functorial in both variables.
		\item In particular $P(A,A)$ is a bialgebra and the measuring map is an algebra homomorphism. 
		\item If $A$ is a bialgebra, then $\Hom(A,B)$ is an algebra with the convolution product, $P(A, B)$ is a bialgebra, and the measuring map is an algebra homomorphism.
		\item $P(A,k) = A^\circ \subset Hom(A,k)$, the dual coalgebra of $A$.
\end{enumerate}
\end{theorem}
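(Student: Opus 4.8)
The plan is to derive every clause from the single universal property of Proposition \ref{meascondition}: to construct a structure map into $P(A,C)$ (respectively $P(A,B')$, $P(A,A)$, $P(A,B)$) it suffices to exhibit a measuring map out of the relevant coalgebra into the appropriate $\Hom$-space, whereupon uniqueness in the universal property supplies both the map and all the compatibility axioms (associativity, unit, functoriality, coalgebra-compatibility) automatically. Clause (1) is the template. Given the universal measuring maps $\pi_{A,B}$ and $\pi_{B,C}$, I would define $\Phi\colon P(A,B)\otimes P(B,C)\to\Hom(A,C)$ on the tensor-product coalgebra by $\Phi(p\otimes q)=\pi_{B,C}(q)\circ\pi_{A,B}(p)$ and verify the measuring identity: applying the measuring property of $\pi_{A,B}(p)$ to $aa'$ and then that of $\pi_{B,C}(q)$ to the resulting product in $B$ produces exactly the sum $\sum (q_{(1)}\circ p_{(1)})(a)\,(q_{(2)}\circ p_{(2)})(a')$ dictated by the comultiplication of $P(A,B)\otimes P(B,C)$. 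The point to stress is that the composition structure aligns the $B$-valued factors in the correct order, so no commutativity hypothesis is needed; universality then yields the unique coalgebra map $P(A,B)\otimes P(B,C)\to P(A,C)$.

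Clauses (2) and (3) are then formal. For functoriality, an algebra map $g\colon B\to B'$ induces the measuring map $p\mapsto g\circ\pi_{A,B}(p)$ into $\Hom(A,B')$ (covariance in the second variable) and an algebra map $f\colon A'\to A$ induces $p\mapsto\pi_{A,B}(p)\circ f$ into $\Hom(A',B)$ (contravariance in the first); each factors uniquely through the relevant universal coalgebra, and respect for composition and identities is immediate from uniqueness. Specialising clause (1) to $A=B=C$ gives a coalgebra map $m\colon P(A,A)\otimes P(A,A)\to P(A,A)$, while the measuring coalgebra $(k,\,1\mapsto\id_A)$ gives a unit $u\colon k\to P(A,A)$. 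Associativity and the unit axioms hold because both sides of each identity are coalgebra maps whose composite with $\pi_{A,A}$ is the same measuring map (namely $\pi(p)\circ\pi(q)\circ\pi(r)$, respectively $\pi(p)$), so uniqueness forces agreement; since $m$ and $u$ are coalgebra maps by construction, $P(A,A)$ is a bialgebra and $\pi_{A,A}$ is an algebra homomorphism into $(\End A,\circ)$ by the defining property of $m$.

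Clause (5) I would treat as the identification of $P(A,k)$ with the classical dual coalgebra. Here $\Hom(A,k)=A^{*}$, and for a coalgebra mapping into $A^{*}$ the measuring condition $f(aa')=\sum f_{(1)}(a)f_{(2)}(a')$ with $f(1_A)=\epsilon(f)$ is precisely the condition that each element lie in the finite dual $A^{\circ}$, carrying the comultiplication dual to $m_A$. Conversely the inclusion $A^{\circ}\hookrightarrow A^{*}$ is itself a measuring coalgebra, and any measuring coalgebra into $A^{*}$ must land in $A^{\circ}$ and be a coalgebra map onto its image, which is exactly the required universality.

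Finally, clause (4) is the one I expect to be delicate. The intended multiplication should make $\pi_{A,B}$ an algebra homomorphism for the convolution product, so one must show that $\Phi(p\otimes q)=\pi(p)*\pi(q)$ is a measuring map $P(A,B)\otimes P(A,B)\to\Hom(A,B)$. Expanding both sides using that $\Delta_A$ is an algebra map, so that $\Delta_A(aa')=\sum a_{(1)}a'_{(1)}\otimes a_{(2)}a'_{(2)}$, together with the measuring property of $p$ and $q$, reduces the identity to commuting two of the four resulting factors inside $B$. This reordering is the heart of the matter and the main obstacle: unlike the composition product of clause (1), the convolution product does not align the $B$-valued factors, so one must invoke the compatibility between the comultiplication of $A$ and the product of the target. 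The cleanest instance is $B=k$, where commutativity is automatic and clause (4) specialises to the assertion that $A^{\circ}=P(A,k)$ is a bialgebra, recovering and extending clause (5). Once $\Phi$ is known to be a measuring map, universality supplies the multiplication and, exactly as in clause (3), all remaining bialgebra axioms follow from uniqueness.
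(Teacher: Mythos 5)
Your overall strategy --- derive every clause from the universal property of Proposition \ref{meascondition}, constructing each structure map by exhibiting a measuring map into the relevant $\Hom$-space and letting uniqueness supply the compatibilities --- is exactly the paper's (Appendix \ref{universalmeasuringcoalgebra}): there the multiplication on $P(A,B)$ is induced, via the same universality argument, from $\mu\circ(\pi\otimes\pi)$ where $\mu$ is the multiplication of $\Hom(A,B)$. Your clauses (1), (2), (3) are correct and actually fill in the verifications the paper leaves implicit, and your clause (5) argument (a coalgebra measuring into $A^{*}$ necessarily lands in the finite dual and does so as a coalgebra map, so $A^{\circ}$ is final) is the standard one; the paper instead obtains the inclusion $P(A,k)\subset\Hom(A,k)$ through cofinite ideals, but the content is equivalent.

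On clause (4), your hesitation is not a defect of your write-up: the obstacle you isolated is genuine, and the paper does not overcome it either --- its appendix proof simply asserts that ``the map $\mu\circ\sigma\otimes\sigma$ measures,'' which is precisely the middle-factor swap you identified. In fact the clause as stated fails for noncommutative $B$. Take $A=k[x]$ with $x$ primitive and $B=M_2(k)$. Any algebra map $h:A\to B$ is the measuring map of a one-dimensional grouplike coalgebra, so it lifts to a grouplike $p_h\in P(A,B)$ with $\pi(p_h)=h$; and the measuring condition on a grouplike element says its image under $\pi$ is multiplicative. If $P(A,B)$ were a bialgebra with $\pi$ an algebra map into the convolution algebra, then $p_f p_g$ would again be grouplike, so $f*g$ would be an algebra map for all algebra maps $f,g$. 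But with $f(x)=X$, $g(x)=Y$ one computes $(f*g)(x)=X+Y$ while $(f*g)(x^2)=X^2+2XY+Y^2$, so multiplicativity forces $XY=YX$, which fails for suitable $X,Y$. Hence clause (4) is only correct under an extra hypothesis such as commutativity of $B$: your remark that $B=k$ is the clean case is exactly right, and $B=k$ (where the clause produces the bialgebra structure on $A^{\circ}$) is in fact the only case in which the paper ever invokes it (Proposition \ref{tensorW}, part 2 of Theorem \ref{embed}). So treat your ``obstacle'' not as a gap to be closed but as a needed correction to the statement.
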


\section{Applications of the universal measuring coalgebra}

\subsection{Motivation: the embeddings of $U\mathfrak{g}$ and $U_q\mathfrak{g}$ }

Given that our ambition is to suggest a definition of $R$-transformation algebra which encompasses Lie groups, Lie algebras and quantized enveloping algebras, in this section and the next, we present the major results describing how universal enveloping algebras and their quantized equivalents can be represented as sub-bialgebras of universal measuring coalgebras.

The classical model that motivated the project is as follows. Let $\mathfrak g$ be a Lie algebra, and suppose that $A$ is algebra on which $\mathfrak g$ acts faithfully as derivations, so that there is a linear map $\phi: \mathfrak g \to \End(A)$ which is a homomorphism of Lie algebras.  This can be restated in terms of measuring coalgebras. Let $C = \mathbf C 1 \oplus \mathfrak g$ be given the structure of a coalgebra with $1$ group-like and elements of $\mathfrak g$ primitive.  The statement that elements of $\mathfrak g$ act as derivations is equivalent to saying that the extension of $\phi$ to all of $C$ sending $1$ to the identity in $End(A)$ measures. 

Let $\pi: P(A, A) \to \End(A)$ be the universal measuring map.  The observation is that the subbialgebra of $P(A,A)$ generated by $C$ is exactly the universal enveloping algebra of $\mathfrak g$.

\begin{theorem}\label{embed}
\begin{enumerate}
	\item The universal enveloping algebra $U\mathfrak g$ includes in $P(A,A)$ as a measuring bialgebra.
	\item Let $e:A \to k$ be an algebra homomorphism, and define $r = e\circ \phi:\mathfrak g \to \Hom(A,k)$. Suppose additionally that $r$ is injective on $\mathfrak g$.  If $A$ is a bialgebra, then $U\mathfrak g$ includes in $P(A,k)$ as a bialgebra.
	\item With $A$, $e$ as above, the map
	\[
	P(1,e): P(A,A) \to P(A,k),
	\]
	generated by $e \circ \pi: P(A, A) \to \Hom(A, k)$, sends $U\mathfrak g$ considered as a subalgebra of $P(A,A)$ isomorphically onto its image in $P(A,k)$.
\end{enumerate}
\end{theorem}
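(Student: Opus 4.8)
The plan is to derive all three statements from two universal properties---that of the enveloping algebra $U\mathfrak g$ (a Lie map into an associative algebra extends uniquely to an algebra map) and that of $P(A,B)$ (Proposition \ref{meascondition})---together with the Heyneman--Radford criterion that a coalgebra map out of a connected coalgebra is injective as soon as it is injective on the first term $C_1$ of the coradical filtration (see \cite{sweedler}). For (1) I would first extend $\phi:\mathfrak g\to\End(A)$ to an algebra homomorphism $\Phi:U\mathfrak g\to\End(A)=\Hom(A,A)$. The key lemma is that $\Phi$ \emph{measures}: the set of $u$ satisfying the measuring identity is closed under multiplication, since for measuring $u,v$ one computes, using that $\Phi$ and $\triangle$ are algebra maps,
\[
\Phi(uv)(aa')=\Phi(u)\Big(\textstyle\sum_{(v)}\Phi(v_{(1)})(a)\,\Phi(v_{(2)})(a')\Big)=\textstyle\sum\Phi\big((uv)_{(1)}\big)(a)\,\Phi\big((uv)_{(2)}\big)(a'),
\]
and it contains $1$ (the identity operator) and all of $\mathfrak g$ (derivations), hence all of $U\mathfrak g$. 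The universal property then gives a unique coalgebra map $\Psi:U\mathfrak g\to P(A,A)$ with $\pi\circ\Psi=\Phi$. That $\Psi$ is a bialgebra map comes for free from uniqueness: the two coalgebra maps $u\otimes v\mapsto\Psi(uv)$ and $u\otimes v\mapsto\Psi(u)\Psi(v)$ out of $U\mathfrak g\otimes U\mathfrak g$ agree after composing with $\pi$ (because $\pi$ is an algebra map by Theorem \ref{univmeasproperties}(3) and $\Phi$ is multiplicative), so they coincide. Finally $\Psi$ is injective by Heyneman--Radford: $U\mathfrak g$ is connected with $C_1=k1\oplus\mathfrak g$, where $\Psi(1)=1$ is grouplike, $\Psi(\mathfrak g)$ is primitive, and $\Psi|_{\mathfrak g}$ is injective since $\pi\circ\Psi|_{\mathfrak g}=\phi$ is injective by faithfulness. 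Thus $\Psi$ identifies $U\mathfrak g$ with the sub-bialgebra generated by $C$.

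For (3), composing with the algebra homomorphism $e$ turns $\pi$ into a measuring map $e\circ\pi:P(A,A)\to\Hom(A,k)$, and the universal property of $P(A,k)$ produces the coalgebra map $P(1,e)$ with $\pi'\circ P(1,e)=e\circ\pi$. To prove $P(1,e)\circ\Psi$ injective I would again apply Heyneman--Radford and check injectivity only on $C_1$. On $\mathfrak g$ we have $\pi'\big(P(1,e)\Psi(x)\big)=e\circ\Phi(x)=r(x)$, injective by hypothesis; and the grouplike $P(1,e)\Psi(1)$, whose image under $\pi'$ is the functional $e$, is linearly independent from $P(1,e)\Psi(\mathfrak g)$, as one sees by evaluating at $1_A$: the derivations kill $1_A$, so $r(x)(1_A)=0$ while $e(1_A)=1$. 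Hence $P(1,e)|_{\Psi(U\mathfrak g)}$ is injective, i.e. it carries the copy of $U\mathfrak g$ isomorphically onto its image.

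Statement (2) would then follow by exhibiting that image as a sub-bialgebra of $P(A,k)$. Since $A$ is a bialgebra, $P(A,k)=A^\circ$ is a bialgebra (Theorem \ref{univmeasproperties}(4),(5)) and $\pi'$ is the injective inclusion $A^\circ\subset\Hom(A,k)$, so $\Psi'(u):=P(1,e)\Psi(u)$ is literally the matrix coefficient $a\mapsto e(\Phi(u)a)$, and by (3) the map $\Psi'$ is an injective coalgebra map. The one nontrivial point---and where I expect the real work to lie---is that $\Psi'$ respects the algebra structure, i.e. that $u\mapsto e\circ\Phi(u)$ converts the product of $U\mathfrak g$ into the convolution product of $A^\circ$. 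Unlike the multiplicativity of $\Psi$ in part (1), this is not formal: unitality already forces $\Psi'(1)=e$ to be the convolution unit, pinning down $e$ as the counit of $A$, and multiplicativity $c_{uv}=c_u*c_v$ is exactly the point at which the comultiplication $\triangle_A$ must interact with the action of $\mathfrak g$ (the derivations behaving as coderivations), as happens automatically in the motivating geometric picture where matrix coefficients of the regular representation send operator composition to convolution. I would therefore isolate this compatibility as the heart of the argument; granting it, $\Psi'$ is an injective bialgebra homomorphism and its image is the desired sub-bialgebra of $P(A,k)$.
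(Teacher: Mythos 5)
Your treatment of parts (1) and (3) is correct but follows a genuinely different route from the paper's. The paper never extends $\phi$ to an algebra map into $\End(A)$; instead it sends the coalgebra $C=k1\oplus\mathfrak g$ into $P(A,A)$ by the universal property, forms the ideal $J$ generated by $\rho X\rho Y-\rho Y\rho X-\rho[X,Y]$ \emph{inside} $P(A,A)$, checks that $J$ is a coideal lying in $\ker\pi$, and then invokes a key lemma (Lemma \ref{coideal}: any coideal of $P(A,A)$ contained in $\ker\pi$ is zero, since otherwise $P(A,A)/J$ would again be universal); only once the Lie relations are known to hold in $P(A,A)$ does the universal property of $U\mathfrak g$ produce the bialgebra map, and injectivity is finished with Proposition \ref{rigid} exactly as you do. Your order of operations --- extend to $\Phi:U\mathfrak g\to\End(A)$ first, show the measuring elements form a subalgebra, lift through the universal property, and extract multiplicativity of $\Psi$ from uniqueness --- avoids the coideal lemma entirely and is in the same spirit as the appendix's construction of the multiplication on $P(A,A)$. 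For (3), the paper shows that $P(1,e)\circ\mu$ coincides with the embedding $\tilde\mu$ of part (2) (both are bialgebra maps agreeing on $\mathfrak g$ after composing with $\tilde\pi$), so injectivity is inherited from (2); you instead prove injectivity of $P(1,e)\circ\Psi$ directly by rigidity on $C_1$, evaluating at $1_A$ to separate the grouplike from the primitives. Both are valid; yours makes (3) logically independent of (2).

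Part (2) is where your proposal has a genuine gap, which you yourself flag: you never prove that $\Psi'(u)=e\circ\Phi(u)$ is multiplicative from $U\mathfrak g$ into the convolution algebra, you only ``grant'' it. Your instinct that this is not formal is correct, and in fact it cannot be proved from the stated hypotheses. What is needed is $r(X)*r(Y)-r(Y)*r(X)=r([X,Y])$ in $(\Hom(A,k),*)$, and nothing ties the $\mathfrak g$-action to $\Delta_A$, which is all that convolution sees. Concretely: let $A=k[x,y]$ with $x,y$ primitive, so $\Hom(A,k)$ is a \emph{commutative} convolution algebra; let $\mathfrak g$ be the two-dimensional Lie algebra with $[X,Y]=X$, acting faithfully by the derivations $X\mapsto\partial_x$, $Y\mapsto x\partial_x+\partial_y$, and let $e=\epsilon_A$ be evaluation at the origin. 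Then $r(X)(f)=f_x(0,0)$ and $r(Y)(f)=f_y(0,0)$, so $r$ is injective, yet $r(X)*r(Y)-r(Y)*r(X)=0\neq r(X)=r([X,Y])$; hence no algebra map of $U\mathfrak g$ into the convolution algebra can extend $r$, and part (2)'s conclusion fails outright here. Be aware that the paper does not fill this gap either: its proof of (2) says ``the proof continues as above'' and rests on the unproved lemma that the ideal $\hat J$ (now generated using the convolution product) lies in $\ker\pi$, which the example above contradicts. So the compatibility you isolated --- in the geometric model, left-invariance of the vector fields, i.e. $\Delta_A\circ\phi(X)=(\id\otimes\,\phi(X))\circ\Delta_A$, which holds in the paper's intended examples such as $A=k[G]$ with $e$ the counit --- is a genuinely missing hypothesis, not something you could have derived; to complete (2) one must add it (after which your matrix-coefficient computation of $c_{uv}=c_u*c_v$ goes through), rather than hope it follows from injectivity of $r$ alone.
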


The proof is deferred to section \ref{classicalembedproof}. 

There are many examples of suitable algebras $A$ - the coordinate ring of $\mathfrak{g}$, polynomials on a vector space on which $\mathfrak{g}$ acts faithfully, and the exterior algebra on such a space all provide faithful representations of $U \mathfrak{g}$ as measuring coalgebras.  If $G$ is an algebraic group with Lie algebra $\mathfrak{g}$ and $k[G]$ is the coordinate ring of $G$, then $k[G]$ is a bialgebra, and so convolution in $\Hom(k[G],k)$ determines a bialgebra structure on $P(k[G],k)$, providing a representation of $U \mathfrak{g}$ in $P(k[G],k)$.

A statement analogous to \ref{embed} can be made replacing $U\mathfrak{g}$ by $U_q\mathfrak{g}$.  The construction above depends only on the fact that $C$, the generating set for $U\mathfrak{g}$, is a measuring coalgebra.  The generating set for $U_q \mathfrak g$ is also a coalgebra and the same technique will provide representations of $U_q\mathfrak{g}$ as sub-bialgebras of suitable $P(A,A)$ or $P(A,k)$. Details are given in section \ref{quantizedalgebras}.  Suitable algebras on which this coalgebra measures are constructed as quotients of tensor algebras of $U_q\mathfrak{g}$-modules.  It turns out that the tensor algebra and the universal measuring coalgebra play complementary roles.  Describing this relationship is the subject of the next section.

\section{Measuring coalgebras and tensor algebras and their quotients.}

The success of the project of recovering the classical transformation algebras and their quantized versions encouraged us to consider replacing the vector space of linear transformations $\Hom(V,W)$ by the universal measuring coalgebra $P(TV,TW)$ where $TV ($resp\ $ TW)$ is the tensor algebra on $V ($resp\ $ W)$. This provides an enrichment of the category of vector spaces over coalgebras.

\begin{proposition}\label{tensorV}
\begin{enumerate}
	\item
	Let $V, W$ be  vector spaces, and let $H$ a coalgebra together with a map
	\[
	\sigma:H \to \Hom(V,TW)
	\]
	Then $\sigma$ extends uniquely to a measuring map
	\[ 
	\hat{\sigma}:H \to \Hom(TV, TW). 
	\]
	\item  If H is in fact a bialgebra then $\hat{\sigma}$ is also an algebra homomorphism.
\end{enumerate}	
\end{proposition}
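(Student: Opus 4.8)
The guiding idea is that $TV$ is the free algebra on $V$, so a measuring map out of a coalgebra is forced by, and recoverable from, its restriction to the degree-one part $V$; rather than merely invoke this, I would make it explicit. Suppose $\hat{\sigma}$ measures and restricts to $\sigma$ on $V$. Writing a spanning tensor as a product $v_1\cdots v_n$ in $TV$ and applying the measuring identity $n-1$ times, together with the normalization $\hat{\sigma}(c)(1_{TV})=\epsilon(c)1_{TW}$ and coassociativity, forces
\[
\hat{\sigma}(c)(v_1\cdots v_n)=\sum_{(c)}\sigma(c_{(1)})(v_1)\,\sigma(c_{(2)})(v_2)\cdots\sigma(c_{(n)})(v_n),
\]
where $\sum_{(c)}c_{(1)}\otimes\cdots\otimes c_{(n)}=\triangle^{(n-1)}c$ is the iterated comultiplication and the right-hand product is taken in $TW$. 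This displayed formula is the only possible candidate, and an induction on $n$ reading it off degree by degree proves the uniqueness half of part (1) at once.

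For existence I would turn the formula into the definition. Its right-hand side is multilinear in $(v_1,\dots,v_n)$, so it descends to a linear map on $V^{\otimes n}$; summing over $n$ (with degree-zero piece $c\mapsto\epsilon(c)1_{TW}$) and using that $\triangle^{(n-1)}$ and $\sigma$ are linear yields a well-defined linear map $\hat{\sigma}:H\to\Hom(TV,TW)$ restricting to $\sigma$ on $V$. The one genuine verification is the measuring identity $\hat{\sigma}(c)(xy)=\sum_{(c)}\hat{\sigma}(c_{(1)})(x)\,\hat{\sigma}(c_{(2)})(y)$; it suffices to test it on decomposables $x=v_1\cdots v_p$ and $y=w_1\cdots w_q$, where both sides expand to the same product in $TW$ precisely because
\[
\triangle^{(p+q-1)}=\bigl(\triangle^{(p-1)}\otimes\triangle^{(q-1)}\bigr)\circ\triangle .
\]
I expect the alignment of the $n$ legs of $\triangle^{(n-1)}c$ with the cut of $xy$ into $x$ and $y$ to be the only fiddly point, but it is purely formal coassociativity bookkeeping, so no real obstacle arises in part (1).

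For part (2), promoting $H$ from a coalgebra to a bialgebra supplies a single new fact: $\triangle_H$ is an algebra map, equivalently $\triangle^{(n-1)}(cd)=\triangle^{(n-1)}(c)\,\triangle^{(n-1)}(d)$ in $H^{\otimes n}$, with $1_H$ group-like. I would substitute this into the formula of part (1) and compare $\hat{\sigma}(cd)$ with the induced product of $\hat{\sigma}(c)$ and $\hat{\sigma}(d)$, again testing on decomposable tensors and on $1_H$; multiplicativity of the coproduct is exactly what forces the two expansions to coincide, so that $\hat{\sigma}$ respects products. Here the main obstacle is conceptual rather than computational: one must first pin down the product on $\Hom(TV,TW)$ with respect to which $\hat{\sigma}$ is asserted to be a homomorphism, and check the unit condition, since the right notion uses the bialgebra structure of $TV$ and $TW$ and feeds the multiplicativity of $\triangle_H$ through the iterated coproducts. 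Once that product is identified, the verification is a short rerun of the part (1) bookkeeping driven by $\triangle(cd)=\triangle(c)\triangle(d)$.
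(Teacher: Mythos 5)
Your part (1) is correct, and it is the explicit, degree-by-degree version of the argument the paper gives in one stroke. The paper transposes $\sigma$ to a linear map $V \to \Hom(H,TW)$, observes that $\Hom(H,TW)$ is an algebra under the convolution product $\alpha\beta(b)=\sum_{(b)}\alpha(b_{(1)})\beta(b_{(2)})$, and invokes the universal property of the tensor algebra to extend to an algebra map $TV \to \Hom(H,TW)$, whose transpose is $\hat{\sigma}$; existence \emph{and} uniqueness then both come for free from freeness of $TV$ together with the equivalence, stated in Section 1, between measurings $H \to \Hom(A,B)$ and algebra maps $A \to \Hom(H,B)$. Your displayed formula $\hat{\sigma}(c)(v_1\cdots v_n)=\sum_{(c)}\sigma(c_{(1)})(v_1)\cdots\sigma(c_{(n)})(v_n)$ is exactly that convolution product $\sigma(v_1)\cdots\sigma(v_n)$ evaluated at $c$, so the mathematical content is identical; you simply re-prove the freeness and transposition steps by Sweedler bookkeeping, which is longer but self-contained and records a formula the paper never writes out.

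Part (2), however, has a genuine gap, located exactly at the point you flag and then set aside. You cannot verify a homomorphism property without fixing the product on $\Hom(TV,TW)$, and once it is fixed, the bialgebra structure of $H$ alone cannot suffice. The product the paper intends is composition on $\End(TV)$ (take $W=V$; this is how the proposition is used for $U_q\mathfrak{g}$ acting on $TV$, and it is what part 3 of Theorem \ref{univmeasproperties}, cited in the paper's proof, concerns). With that product, $\hat{\sigma}(cd)$ expands into terms $\sigma(c_{(i)}d_{(i)})(v_i)$, while $\hat{\sigma}(c)\circ\hat{\sigma}(d)$ expands into terms $\sigma(c_{(i)})\bigl(\sigma(d_{(i)})(v_i)\bigr)$: multiplicativity of $\triangle_H$ only aligns the Sweedler legs, and equality of the individual terms requires that $\sigma$ itself be multiplicative in degree one, i.e.\ that $\sigma$ restrict to an algebra map $H \to \End(V)$ (an $H$-module structure on $V$). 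That is a hypothesis on $\sigma$, not a consequence of $H$ being a bialgebra: for $H=k[x]$ with $x$ primitive, $V=kv$, $\sigma(x)v=v$ and $\sigma(x^n)v=0$ for $n\geq 2$, one gets $\hat{\sigma}(x^2)(v)=0$ while $(\hat{\sigma}(x)\circ\hat{\sigma}(x))(v)=v$, so no choice of product rescues the claim for arbitrary linear $\sigma$. (To be fair, the paper's own statement of part (2) leaves this hypothesis implicit too, but its proof points to the appendix, where the measuring map is explicitly assumed to be an algebra homomorphism, and where the multiplication on $P(TV,TV)$ is constructed via the universal property precisely so that the universal measuring map $\pi\colon P(TV,TV)\to\End(TV)$ is an algebra homomorphism.) To repair your plan, add the hypothesis that $\sigma$ is an algebra map into $\End(V)$, as it is in all of the paper's applications; with that in hand, your part (1) formula together with $\triangle^{(n-1)}(cd)=\triangle^{(n-1)}(c)\,\triangle^{(n-1)}(d)$ does yield $\hat{\sigma}(cd)=\hat{\sigma}(c)\circ\hat{\sigma}(d)$, and the unit condition is immediate since $1_H$ is group-like.
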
	
	
\begin{proof}
\begin{enumerate}
\item The map $\sigma$ can be thought of as a map
\[
\sigma: V \to \Hom(H,TW).
\]
But since $T(V)$ is an algebra and $H$ is a coalgebra, $\Hom(H,TW)$ has the structure of an algebra, where
\[
\alpha \beta (b) = \sum_{(b)} \alpha (b_{(1)})\beta(b_{(2)}).
\]
The universal property of tensor algebras then extends $\sigma$ to an algebra map
\[
\sigma: TV \to \Hom(H,TW)
\]
or equivalently
\[
\hat{\sigma} : H \to \Hom(TV,TW)
\]
measures as required. 

\item The second part follows from the third part of \ref{univmeasproperties}, which is proved in the Appendix.
\end{enumerate}
\end{proof}

The object of interest here is  subcoalgebra of $P(TV,TW)$ whose elements determine maps from $V$ to $W$.

\begin{definition} Define
\[
P(V,W) = \{ p \in P(TV,TW): \pi(p)(V) \subset W \}.
\]
\end{definition}

(Here $\pi$ is the measuring map $\pi: P(TV,TW) \to \Hom (TV,TW)$.) It is easy to check that $P(V,W)$ is a subcoalgebra of $P(TV,TW)$. Replacing $\Hom(V,W)$ by  $P(V,W)$ gives an enrichment of the category of linear spaces over the category of coalgebras.  In the case of particular interest where $V = W$, $P(V,V)$ will be denoted simply by $P(V)$.  By the third part of \ref{tensorV}, $P(V)$ is a bialgebra.

Composition gives $\Hom(TV,TV) $ and hence $P(TV,TV)$ (resp $P(V)$) has the structure of a bialgebra.  

The other common multiplicative structure on $\Hom(D, A)$ is convolution when $D$ is a coalgebra and A is an algebra,  For any vector space $W$, the concept of a dual space can be "enriched" by considering the universal measuring coalgebra $P(TW,k)$.  When the vector space $W$ is replaced by a coalgebra $D$ this coalgebra becomes a bialgebra.

\begin{proposition}\label{tensorW}
\begin{enumerate}
	\item Let $C$ be any coalgebra, $W$ any vector space and let
	\[
	\mu: C \to \Hom(W,k)
	\]
	be any linear map.  Then $\mu$ extends uniquely to a measuring map
	\[ 
	\hat{\mu}: C \to \Hom(TW, k)
	\]
	\item If $D$ is a coalgebra, comultiplication in $D$ extends uniquely to a bialgebra structure on $TD$.
	\item Suppose $D$ is a coalgebra so that $Hom(D,k)$ is an algebra under convolution.  If $C$ is a bialgebra and $\mu$ is an algebra homomorphism, then $\hat{\mu}$ is also a bialgebra map from $C$ to $(TD)^\circ$.
\end{enumerate}
\end{proposition}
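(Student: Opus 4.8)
For part (1), note that this is the special case of Proposition \ref{tensorV}(1) in which the codomain tensor algebra is $T(0)=k$; equivalently, and just as directly, I would rewrite $\mu$ as a map $W\to\Hom(C,k)$. Since $C$ is a coalgebra, $\Hom(C,k)$ is an algebra under the convolution product dual to the comultiplication of $C$, so the universal property of the tensor algebra extends $\mu$ uniquely to an algebra homomorphism $TW\to\Hom(C,k)$, equivalently to a measuring map $\hat\mu:C\to\Hom(TW,k)$, and uniqueness of $\hat\mu$ is uniqueness of the algebra extension. For part (2), I would extend the comultiplication $\triangle_D:D\to D\otimes D\subset TD\otimes TD$ by the universal property of the tensor algebra (using that $TD\otimes TD$ is an algebra) to an algebra homomorphism $\triangle_{TD}:TD\to TD\otimes TD$, and likewise extend the counit $\epsilon_D:D\to k$ to an algebra homomorphism $\epsilon_{TD}:TD\to k$. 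That these are algebra maps is built into the construction, so it remains only to verify coassociativity and the counit axiom; both sides of each identity are algebra homomorphisms out of $TD$ that agree on the generators $D$ by the coalgebra axioms of $D$, hence agree by uniqueness of the algebra extension. The same uniqueness shows this is the only bialgebra structure extending $\triangle_D$.

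For part (3), I would first assemble the ambient structures. By part (2) the tensor algebra $TD$ is a bialgebra, so by Theorem \ref{univmeasproperties}(4)--(5) the space $\Hom(TD,k)$ is an algebra under the convolution product dual to $\triangle_{TD}$, the dual coalgebra $(TD)^\circ=P(TD,k)$ is a bialgebra, and the universal measuring map $\pi:(TD)^\circ\to\Hom(TD,k)$ is an injective algebra homomorphism. The measuring map $\hat\mu$ of part (1) makes $C$ a measuring coalgebra for the pair $(TD,k)$, so by the universal property (Proposition \ref{meascondition}) it factors as $\pi$ composed with a coalgebra map $C\to(TD)^\circ$; this factorization is the map whose bialgebra property is to be established. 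Since $\pi$ is injective and multiplicative, it suffices to show that $\hat\mu:C\to\Hom(TD,k)$ is multiplicative for convolution, i.e. that $\hat\mu(cc')=\hat\mu(c)*\hat\mu(c')$ for all $c,c'\in C$.

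I would prove this multiplicativity by invoking the uniqueness in part (1), now applied to the coalgebra $C\otimes C$. Consider the two maps $C\otimes C\to\Hom(TD,k)$ given by $c\otimes c'\mapsto\hat\mu(cc')$ and $c\otimes c'\mapsto\hat\mu(c)*\hat\mu(c')$. The first is $\hat\mu\circ m_C$, a composite of a coalgebra map (multiplication in the bialgebra $C$) with a measuring map, hence measures; the second, writing $\hat\mu$ also for its factorization through $(TD)^\circ$, is $\pi\circ m_{(TD)^\circ}\circ(\hat\mu\otimes\hat\mu)$, a composite of coalgebra maps (the tensor square of $C\to(TD)^\circ$ followed by multiplication in the bialgebra $(TD)^\circ$) with the universal measuring map $\pi$, hence also measures. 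Both are therefore measuring extensions of their common restriction to $\Hom(D,k)$: on $d\in D$ the first gives $\mu(cc')(d)$, and since $\triangle_{TD}$ restricts to $\triangle_D$ on the generators the second gives $\sum\mu(c)(d_{(1)})\mu(c')(d_{(2)})=(\mu(c)*\mu(c'))(d)=\mu(cc')(d)$, using that $\mu$ is an algebra homomorphism. By the uniqueness statement of part (1) the two measuring maps coincide, which is exactly the desired multiplicativity. Preservation of the counit is automatic for a coalgebra map, and preservation of the unit, $\hat\mu(1_C)=\epsilon_{TD}$, follows from the same uniqueness applied to the one-dimensional coalgebra $k$.

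The main obstacle in part (3) is the bookkeeping of the two algebra structures living on the dual side: the convolution product on $\Hom(TD,k)$ dual to $\triangle_{TD}$ versus the convolution on $\Hom(D,k)$ dual to $\triangle_D$, which must be matched correctly on generators. The genuinely substantive point is confirming that the convolution $\hat\mu(c)*\hat\mu(c')$ defines a measuring map at all, and it is precisely here that parts (1) and (2) combine with Theorem \ref{univmeasproperties}: the bialgebra structure of $(TD)^\circ$ exhibits this map as a composite ending in the universal measuring map $\pi$, so that the uniqueness of part (1) applies verbatim and the remaining verifications are formal.
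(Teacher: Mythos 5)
Your proposal is correct, and for parts (1) and (2) it is the paper's own argument: flip $\mu$ to a map $W\to\Hom(C,k)$ into the convolution algebra and apply the universal property of $TW$; extend $\triangle_D$ and $\epsilon_D$ to algebra maps out of $TD$. (Where the paper dismisses coassociativity and the counit axiom as ``routine verification,'' your observation that both sides are algebra homomorphisms out of $TD$ agreeing on the generators $D$ is the clean way to discharge it, and it also yields the uniqueness claim.) The real divergence is in part (3): the paper's entire proof is the sentence ``this is a direct application of the fourth part of Theorem \ref{univmeasproperties}.'' That citation supplies the ambient structure you also invoke --- $\Hom(TD,k)$ is a convolution algebra, $(TD)^\circ=P(TD,k)$ is a bialgebra, and $\pi$ is an (injective) algebra homomorphism --- but it does not by itself address the point on which the conclusion actually turns, namely that $\hat\mu:C\to\Hom(TD,k)$ is multiplicative for convolution given only that $\mu:C\to\Hom(D,k)$ is. You identify this bridging step explicitly and prove it by a genuinely different mechanism: applying the uniqueness of part (1) to the coalgebra $C\otimes C$, comparing the two measuring maps $c\otimes c'\mapsto\hat\mu(cc')$ and $c\otimes c'\mapsto\hat\mu(c)*\hat\mu(c')$, and checking they agree on the generators $D$ (which is exactly where the hypothesis that $\mu$ is an algebra homomorphism enters). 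The paper's implicit alternative would run the uniqueness through the universal property of $P(TD,k)$ instead, but as written it presupposes the very multiplicativity of $\hat\mu$ that you prove. So your version is longer but self-contained, and it localizes precisely where each hypothesis is used; the paper's is terser at the cost of leaving the substantive verification to the reader.
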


\begin{proof}
\begin{enumerate}
	\item As before, $\Hom(C,k)$ has the structure of an algebra (since $C$ is a coalgebra), so that 
	\[
	\mu: W \to \Hom(C,k)
	\]
	extends to 
	\[ 
	\hat{\mu}: TW \to \Hom(C,k)
	\]
	\item The inclusion $D\otimes D \subset TD \otimes TD$, provides a linear map
	\[
	\triangle : D \to TD \otimes TD.
	\]
	By the universal property of the tensor algebra, this extends uniquely to an algebra homomorphism 
	
	\[
	T\triangle : TD \to TD \otimes TD.
	\]
	Routine verification shows that $T\triangle$ is coassociative.  Similarly, the counit $\epsilon :D\to k$ extends to an algebra homomorphism $T: TD \to k$.  Again, the required identities can be verified by direct calculation.
	\item This is a direct application of the fourth part of \ref{univmeasproperties} which will be proved in the appendix.
	\end{enumerate}
\end{proof}

In the event that $W = V\otimes V^*$, observe that we can identify $V\otimes V^*$ in $T(V\otimes V^*)$ with $V\otimes V^*$ in $TV\otimes TV^*$, and hence make the inclusion
\[
T(V\otimes V^*) \subset TV\otimes TV^*.
\]

The case of interest is when $D = V\otimes V^*$, where there are isomorphisms
\[
\Hom(V,V) \\\ = \\\ V\otimes V^* \\\ = \\\ (V\otimes V^*)^*
\]
The Killing form identifies the space $V\otimes V^*$ with its dual.  Thus the vector space $V\otimes V^*$ carries both an algebra structure and a coalgebra structure, but these two structures are as incompatible as possible.  The function of $P(V)$ is to repair the incompatibility.  Moreover, the above identifications are as algebras (with the algebra structure on the last being given by convolution).  Thus the two apparently different cases of $\sigma$ and $\mu$ above in fact describe the same situation. 

\textbf{Notation}. It quickly becomes confusing whether $V\otimes V^*$ (or $TW$) is being regarded as an algebra or a coalgebra.  When there is danger of confusion, we will write $V\otimes_aV^*$ (or $T_aW$) when considering these spaces as an algebra, and $V\otimes_cV^*$ (or $T_cW$) when they are considered as coalgebras.
\begin{theorem}
\label{unification}
\[
P(V) = (T_a(V\otimes_c V^*))^\circ
\]
\end{theorem}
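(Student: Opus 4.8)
The plan is to recognize that both sides solve one and the same universal problem, and then to promote the resulting coalgebra isomorphism to a bialgebra isomorphism. First I would use part (5) of Theorem~\ref{univmeasproperties} to rewrite the right-hand side as $(T_a(V\otimes_c V^*))^\circ = P(T_a(V\otimes_c V^*),k)$. I then claim that both $P(V)$ and $P(T_a(V\otimes_c V^*),k)$ are terminal in the category $\mathcal{D}$ whose objects are pairs $(C,f)$ with $C$ a coalgebra and $f\colon C\to\End(V)$ a linear map, and whose morphisms $(C,f)\to(C',f')$ are coalgebra maps $g$ with $f'\circ g=f$. Here I use the identification $\End(V)=V\otimes V^*=(V\otimes V^*)^*=\Hom(V\otimes V^*,k)$ coming from the trace pairing; this is the one point at which the argument needs $V$ finite dimensional, since otherwise $\End(V)$ sits only as a proper subspace of $(V\otimes V^*)^*$.

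To establish the universal property of the left side, given $(C,f)\in\mathcal{D}$ I would regard $f$ as a map $C\to\Hom(V,V)\subset\Hom(V,TV)$ and apply part (1) of Proposition~\ref{tensorV} to extend it uniquely to a measuring map $\hat f\colon C\to\Hom(TV,TV)$; the universal property of $P(TV,TV)$ in Proposition~\ref{meascondition} then yields a unique coalgebra map $C\to P(TV,TV)$ over $\hat f$, and since $\hat f(c)$ agrees with $f(c)$ on $V$ this map lands in the subcoalgebra $P(V)$. Uniqueness follows by chaining the two uniqueness clauses backwards, so $P(V)$, equipped with the corestriction $\pi_V\colon p\mapsto\pi(p)|_V$, is terminal in $\mathcal{D}$. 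The finite dual is treated by the identical pattern, now using the universal property of $P(-,k)$ together with part (1) of Proposition~\ref{tensorW}: a linear map $C\to\Hom(V\otimes V^*,k)$ extends uniquely to a measuring map into $\Hom(T(V\otimes V^*),k)$, and under the identification $\Hom(V\otimes V^*,k)=\End(V)$ this exhibits $(T_a(V\otimes_c V^*))^\circ$ as terminal in $\mathcal{D}$ as well. Terminal objects being unique up to unique isomorphism, there is a canonical coalgebra isomorphism $\Phi\colon P(V)\to(T_a(V\otimes_c V^*))^\circ$ with $\mu\circ\Phi=\pi_V$, where $\mu$ denotes restriction to the generators $V\otimes V^*$.

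It then remains to check that $\Phi$ is multiplicative. I would argue that each multiplication is itself pinned down by terminality. Endomorphisms preserving $V$ are closed under composition, so $P(V)$ is a sub-bialgebra and its product is the unique coalgebra map $m_{P}\colon P(V)\otimes P(V)\to P(V)$ inducing $(p,q)\mapsto\pi_V(p)\circ\pi_V(q)$ on the target, since by part (3) of Theorem~\ref{univmeasproperties} $\pi$ is an algebra map for composition. On the other side, the coalgebra $V\otimes_c V^*$ is by construction the linear dual of the composition algebra $\End(V)$, so on generators $t^i_j=e_i\otimes e^j$ one has $T\triangle(t^i_j)=\sum_k t^i_k\otimes t^k_j$, whence $(f*g)(t^i_j)=\sum_k f(t^i_k)\,g(t^k_j)$; that is, restriction to $V\otimes V^*$ carries convolution to matrix product, so the product $m_{T}$ on $(T_a(V\otimes_c V^*))^\circ$ is the unique coalgebra map inducing the same composition on $\End(V)$. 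Because $\Phi$ intertwines $\pi_V$ with $\mu$, uniqueness forces $\Phi\circ m_{P}=m_{T}\circ(\Phi\otimes\Phi)$, and since $\Phi$ is already a coalgebra isomorphism it is a bialgebra isomorphism.

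The hard part will be exactly this final compatibility. One must verify that the coalgebra structure on $V\otimes_c V^*$ genuinely is the linear dual of the composition algebra $\End(V)$, and in doing so keep careful track of the transpose and opposite-order conventions buried in the trace pairing, so that convolution on the dual matches composition rather than its opposite. Everything else is a bookkeeping application of the universal properties of Propositions~\ref{tensorV} and~\ref{tensorW}; it is only here that an actual index computation, and the choice of identifications making it come out on the nose, does any work.
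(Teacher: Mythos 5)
Your proposal is correct and is essentially the paper's own argument: the paper also constructs the isomorphism by using the universal property of $P(T_a(V\otimes_c V^*),k)$ in one direction and that of $P(TV,TV)$ (via Proposition~\ref{tensorV}) in the other, under the identification $\Hom(V,V)=V\otimes_a V^*=\Hom(V\otimes_c V^*,k)$ --- which is exactly your terminal-object formulation unwound into explicit mutually inverse maps. Your more careful treatment of multiplicativity (pinning down both products by terminality and checking that convolution restricts to composition on generators) is a sharpened version of the paper's one-line assertion that the identification preserves the multiplicative structure, not a different method.
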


\begin{proof}
This is nearly tautological: by the definition of $P(V)$, the measuring map $\pi :P(V) \to \Hom(TV,TV)$ restricts to a linear map $\pi:P(V) \to \Hom(V,V)$.   But $\Hom(V,V) = V\otimes_a V^* = \Hom(V\otimes_c V^*, k)$.  Thus by the universal property of $P(T(V\otimes_c V^*), k)$,  the following diagram commutes

\begin{center}
\setlength{\unitlength}{1mm}
\begin{picture}(100,40)

	\put(30,10){\makebox(0,0){$P(T(V\otimes_aV^*,k)$}}
	\put(30,30){\makebox(0,0){$P(V)$}}
	\put(80,30){\makebox(0,0){$\Hom(V,V)$}}
	\put(80,20){\makebox(0,0){$\Hom(V\otimes_cV^*,k,)$}}
	\put(80,10){\makebox(0,0){$\Hom(T_a(V\otimes_c V^*),k)$}}
	
	\put(30,25){\vector(0,-1){10}}
	\put(37,30){\vector(1,0){31}}
	\put(80,25){\vector(0,-1){3}}
	\put(80,17){\vector(0,-1){3}}
	\put(50,10){\vector(1,0){10}}

	\put(55,33){\makebox(0,0){$\pi$}}
	\put(25,20){\makebox(0,0){$\rho$}}
	\put(55,13){\makebox(0,0){$\pi$}}
	\put(82,25){\makebox(0,0){$\cong$}}
\end{picture}	
\end{center}

and the map 
\[
\rho: P(V) \to   P(T(V\otimes V^*), k) = (T(V\otimes V^*))^\circ.
\]
is unique.
Conversely, the measuring map $\pi: P(T(V\otimes_c V^*), k) \to \Hom(T(V\otimes_c V^*), k)$ restricts to a linear map $\pi: P(T(V\otimes_c V^*), k) \to \Hom(V\otimes_c V^*, k) = \Hom(V,V).$ The resulting map of measuring coalgebras $P(T(V\otimes V^*), k) \to P(V)$ provides an inverse to $\rho$.

Since the identification

\[
\Hom(V\otimes _cV^*, k) \cong (V\otimes_a V* \cong \Hom(V,V)
\]
preserves the multiplicative structure of these spaces, the maps $\rho$ and its inverse are bialgebra maps.

\end{proof}

The maps induced by such $\sigma, \mu$ of \ref{tensorV}, \ref{tensorW} descend to quotients of $TV$ provided the ideal in question is preserved.  This is the chief tool for constructing enveloping algebras, both classical and quantized, as sub-bialgebras of universal measuring coalebras.

\begin{proposition}
\begin{enumerate}
\label{quotient}
	\item Suppose that $C$ is a coalgebra and that $\sigma:C\to \End(TV)$ measures.  Let $J$ be an ideal in $TV$.  Suppose additionally that  $\sigma(C)(J) \subset J$. Then $\sigma$ induces a measuring map $\tilde{\sigma}: C \to \End(TV/J)$ 
	
	\item If $\hat{\mu}: C \to \Hom(TV,k)$ measures, and $J$ is an ideal as above such that $c(J) = 0$ for all $c \in C$, then $\hat{\mu}$ descends to a measuring map $\hat{\mu}:C \to \Hom(TV/J,k)$.
\end{enumerate}
\end{proposition}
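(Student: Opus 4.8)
The plan for both parts is the same: write down the induced map explicitly, verify that it is well defined using the stated hypothesis on $J$, and then observe that the two measuring axioms descend automatically because the quotient map is a surjective algebra homomorphism. Throughout, write $q: TV \to TV/J$ for the canonical projection, so that $q$ is a surjective algebra map with kernel $J$ and $q(aa') = q(a)q(a')$.

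For the first part I would define $\tilde{\sigma}(c) \in \End(TV/J)$ by the rule $\tilde{\sigma}(c)(q(a)) = q(\sigma(c)(a))$. The only point requiring an argument is independence of the representative: if $q(a) = q(a')$, then $a - a' \in J$, and the hypothesis $\sigma(C)(J) \subset J$ gives $\sigma(c)(a-a') \in J$, whence $q(\sigma(c)(a)) = q(\sigma(c)(a'))$. This is the one place where the invariance of $J$ is used; linearity of $\tilde{\sigma}(c)$ and of $c \mapsto \tilde{\sigma}(c)$ is inherited from $\sigma$. The measuring conditions then follow by applying $q$ to the measuring identities for $\sigma$ and using multiplicativity of $q$: expanding $\sigma(c)(aa')$ by the measuring property gives
\[
\tilde{\sigma}(c)(q(a)q(a')) = q(\sigma(c)(aa')) = \sum_{(c)} \tilde{\sigma}(c_{(1)})(q(a))\,\tilde{\sigma}(c_{(2)})(q(a')),
\]
and likewise $\tilde{\sigma}(c)(q(1)) = q(\sigma(c)(1)) = q(\epsilon(c)1) = \epsilon(c)q(1)$, which are exactly the two measuring axioms in $\End(TV/J)$.

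For the second part the mechanism is dual. The hypothesis now reads $\hat{\mu}(c)(J) = 0$ for every $c \in C$, i.e. each functional $\hat{\mu}(c)$ annihilates $J = \ker q$. Hence $\hat{\mu}(c)$ factors uniquely through $q$ as $\hat{\mu}(c) = \bar{\mu}(c) \circ q$ for a well-defined $\bar{\mu}(c) \in \Hom(TV/J, k)$, and $c \mapsto \bar{\mu}(c)$ is linear. The measuring identities transfer verbatim: evaluating the measuring condition for $\hat{\mu}$ at the pair $(a,a')$ and rewriting via $\bar{\mu}(c)(q(a)) = \hat{\mu}(c)(a)$ together with $q(aa') = q(a)q(a')$ yields the measuring condition for $\bar{\mu}$ at $(q(a),q(a'))$, while the unit condition $\bar{\mu}(c)(q(1)) = \hat{\mu}(c)(1) = \epsilon(c)$ passes over identically.

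I do not expect a genuine obstacle here: the content of the proposition is precisely the bookkeeping that the stated hypothesis on $J$ is exactly what is needed to make the descended map well defined, after which the measuring axioms come for free from the fact that $q$ is an algebra homomorphism. If anything requires a moment's care, it is only keeping straight the direction of the two constructions — in part (1) the preservation condition $\sigma(C)(J) \subset J$ lets the \emph{operators} descend to the quotient algebra, whereas in part (2) the annihilation condition $\hat{\mu}(c)(J) = 0$ lets the \emph{functionals} factor through the quotient.
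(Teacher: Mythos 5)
Your proof is correct and follows the same route as the paper's own (very terse) argument: the hypothesis on $J$ is exactly what makes the map descend to the quotient, and the measuring axioms pass through because the projection $q$ is an algebra homomorphism. You have simply spelled out the well-definedness and the verification that the paper leaves implicit.
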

\begin{proof}
For both parts of the proposition the argument is the same.  The statement that $\sigma(C)(J) \subset J$ (resp $\hat{\mu}(C)(J) = 0$) says that $\sigma$ induces a map $\sigma :C \to End(TV/J, TV/J)$ (resp $\hat{\mu}$ descends to a map $\hat{\mu}: C \to Hom(TV/J, k)$).  These maps retain the measuring property.
\end{proof}

\section{$R$-transformation algebras}

The results of the previous sections have encouraged us to consider universal measuring bialgebras $P(V)$ as candidates for the algebra of transformations of a vector space $V$. Where the vector space becomes equipped with a specified braiding $R: V \otimes V \to V \otimes V$, we can describe the \emph{$R$-transformation algebra} $P_R(V)$ as the sub-bialgebra of $P(V)$ consisting of elements that preserve the braiding.  The $R$-transformation algebra incorporates the universal enveloping algebra and its quantized version as special cases.  Moreover, described in this setting, $P_R(V)$ is easily recognized as the dual coalgebra of the FRT bialgebra $A(R)$, when $V$ is finite-dimensional.  

\subsection{The braiding on $TV$.}

The bialgebra $P_R(V)$ will be constructed as a sub-bialgebra of $P(TV,TV)$ which preserves a braiding on $TV$ which extends the one given on $V$.

Recall that a \emph{braided algebra} $A$ is an algebra together with an invertible linear operator $\Psi: A \otimes A \to A \otimes A$ such that
\begin{enumerate}
 	\item $\Psi$  satisfies the braid equation on $A \otimes A \otimes A$, that is:
	\[
	(\id \otimes \Psi)(\Psi \otimes \id)(\id \otimes \Psi) = (\Psi \otimes \id)(\id \otimes \Psi)(\Psi \otimes \id).
	\]
	\item The multiplication $m: A \otimes A \to A$ and unit $\nu: \underline{1} \to A$ satisfy the following consistency conditions:
\begin{align}
\label{consistency}
&\Psi(m \otimes \id) = (\id \otimes m)(\Psi \otimes \id)(\id \otimes \Psi): A \otimes A \otimes A \to A \otimes A,\notag\\
&\Psi(\id \otimes m) = (m \otimes \id)(\id \otimes \Psi)(\Psi \otimes \id): A \otimes A \otimes A \to A \otimes A,\\
\Psi&(m \otimes \nu) = (\nu \otimes m): A \to A \otimes A, \qquad \Psi(\nu \otimes m) = (m \otimes \nu): A \to A \otimes A.\notag
\end{align}
\end{enumerate}
For further details on braided algebras and related structures, refer to \cite{majidreview}.

Now let $(V,R)$ be a \emph{braided vector space}, that is, $V$ is a vector space and $R: V \otimes V \to V \otimes V$ is an invertible linear operator satisfying the braid equation. Then $TV$ can be given the structure of a braided algebra.

\begin{proposition}If $R$ is a braid operator for $V$, then
\begin{enumerate}
	\item $R$ extends to a map $\Psi^{m,n}$
	\[  
	\Psi^{m,n} : \otimes ^mV \otimes \otimes^nV \to \otimes ^nV \otimes \otimes^mV.
	\]
	where $\Psi^{1,1}  = R: V \otimes V \to V \otimes V$.
	\item Writing $\Psi$ for $\sum_{m,n} \Psi ^{m,n}$, then $\Psi$ gives $TV$ the structure of a braided algebra.
	\item If $J$ is an ideal of $TV$ such that $\Psi J = J$, then $\Psi$ gives  $TV/J$ the structure of a braided algebra.
	
\end{enumerate}

\end{proposition}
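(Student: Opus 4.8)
The plan is to reduce all three parts to a single structural fact about braids: any two reduced words for the same element of the symmetric group $S_N$ are connected by applications of the braid relation, so that a \emph{positive} braid lifting a prescribed permutation is well defined, and any representation of the braid group $B_N$ assigns to it a well-defined operator. Since $R$ satisfies the braid equation, setting $R_i = \id_{V^{\otimes(i-1)}} \otimes R \otimes \id_{V^{\otimes(N-i-1)}}$ on $V^{\otimes N}$ gives a representation of $B_N$ via $\sigma_i \mapsto R_i$. Every operator I need will be the image of an explicit positive braid under this representation, and every identity I need will be an equality of two positive braids lifting the same permutation, hence automatic.

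For the first part, let $w_{m,n} \in S_{m+n}$ be the minimal-length shuffle that moves the first block past the second without reordering either, i.e. sending positions $1,\dots,m$ to $n+1,\dots,n+m$ and positions $m+1,\dots,m+n$ to $1,\dots,n$. I would fix the positive braid lift of $w_{m,n}$ coming from any reduced word, and set $\Psi^{m,n}$ to be its image under $\sigma_i \mapsto R_i$; concretely this is the evident \emph{comb} of $mn$ copies of $R$ sliding the first block rightward one strand at a time. Independence of the chosen reduced word, and hence well-definedness of $\Psi^{m,n}$, is precisely the braid relation for $R$; and since $w_{1,1}$ is a single transposition, $\Psi^{1,1} = R$. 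Each $\Psi^{m,n}$ is a composite of invertible operators $R_i$, so it is invertible.

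For the second part I would treat the two families of axioms separately. The braid equation for $\Psi$ on $V^{\otimes a}\otimes V^{\otimes b}\otimes V^{\otimes c}$ equates two operators that braid three blocks past one another; both are positive braids lifting the same element of $S_{a+b+c}$, so they agree by the uniqueness above. On the graded pieces the consistency conditions become the block-factorisation identity $\Psi^{a+b,c} = (\id\otimes m)(\Psi^{a,c}\otimes \id)(\id\otimes \Psi^{b,c})$ together with its mirror $\Psi^{a,b+c} = (m\otimes\id)(\id\otimes\Psi^{a,c})(\Psi^{a,b}\otimes \id)$, where $m$ is concatenation in $TV$; since concatenation introduces no crossings, each side is again a positive braid lifting the same shuffle, so these follow from the construction in Part 1. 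The unit normalisations hold because the empty block contributes the identity braid, $\Psi^{0,c} = \id = \Psi^{a,0}$.

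For the third part, the hypothesis that $J$ is stable under $\Psi$ guarantees that $\Psi$ carries the kernel $J\otimes TV + TV\otimes J$ of the projection $TV\otimes TV \to (TV/J)\otimes(TV/J)$ into itself, so $\Psi$ descends to an operator $\overline{\Psi}$ on $(TV/J)\otimes(TV/J)$. Because $p\colon TV\to TV/J$ is a surjective algebra homomorphism with $(p\otimes p)\circ\Psi = \overline{\Psi}\circ(p\otimes p)$ and $p\circ m = \bar m\circ(p\otimes p)$, every braided-algebra identity verified in Part 2 pushes forward along $p$ to $TV/J$. The main obstacle is none of these formal descents but the bookkeeping in Part 1: one must pin down the correct positive lift $w_{m,n}$, confirm that the comb of $R$'s realises it, and package the word-independence cleanly enough that Parts 2 and 3 then reduce, as claimed, to equalities of positive braids.
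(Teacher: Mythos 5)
Your proposal is correct and takes essentially the same approach as the paper: the paper defines $\Psi^{m,n}$ by ``using $R$ on adjacent factors one pair at a time'' (your comb of $mn$ copies of $R$) and asserts that the braid identity makes this well-defined, deferring details to Majid. Your packaging via positive lifts of reduced words in $S_N$, with the axioms in parts 2 and 3 reduced to equalities of reduced positive lifts of the same shuffle and a formal descent along $TV \to TV/J$, is precisely the rigorous form of that deferred argument.
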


\begin{proof}.  The map $\Psi^{m,n}$ is obtained by using $R$ on adjacent factors one pair at a time, and observing that the braid identity ensures that this is well-defined.  See Majid \cite{majidpoincare} for details.
\end{proof}

\subsection{Definition of $P_R(V)$}
\label{definition}

\begin{definition} Let $V$ be a vector space with braiding $R$.  Let $C$ be a coalgebra and suppose the map $\sigma: C \to \End(TV)$ is a measuring map.  Say that $C$ \emph{preserves} $(V, R)$ if:
\begin{align*}
&\sigma(c)(V) \subset V \qquad \forall c \in C,\\
R(\sigma \otimes \sigma)(\Delta(c))&(v \otimes w) = (\sigma \otimes \sigma)(\Delta(c))R(v \otimes w) \qquad \forall c \in C; \, \forall v, w \in V.
\end{align*}
\end{definition}
The next lemma follows from a simple application of equations \ref{consistency} and the braiding condition:

\begin{lemma} \label{maptoR}
Take $C$ a coalgebra and suppose the map $\sigma: C \to \End(TV)$ is a measuring that preserves $(V, R)$.  Then $C$ preserves the braiding $\Psi$ on all of $TV$, that is,
\begin{equation*}
\Psi(\sigma \otimes \sigma)(\Delta(c))(v \otimes w) = (\sigma \otimes \sigma)(\Delta(c))\Psi(v \otimes w) \qquad \forall c \in C; \,\, \forall v, w \in TV.
\end{equation*}
\end{lemma}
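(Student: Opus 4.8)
The plan is to reduce the statement to a purely graded identity and prove it by a two–stage induction resting on the single hypothesis in bidegree $(1,1)$. Write $\sigma_c := (\sigma\otimes\sigma)\Delta(c) = \sum_{(c)}\sigma(c_{(1)})\otimes\sigma(c_{(2)})$, viewed as an operator on $TV\otimes TV$. First I would note that, because $\sigma$ measures and $\sigma(c)(V)\subset V$, iterating the measuring identity shows $\sigma(c)$ preserves each homogeneous component $\otimes^m V$; hence $\sigma_c$ preserves bidegree on $TV\otimes TV$, whereas $\Psi^{m,n}$ interchanges it. Since $\Psi=\sum_{m,n}\Psi^{m,n}$ and every $v,w\in TV$ split into homogeneous pieces, the lemma is equivalent to the family
\[
P(m,n):\qquad \Psi^{m,n}\,\sigma_c = \sigma_c\,\Psi^{m,n}\quad\text{on }\otimes^m V\otimes\otimes^n V,\ \ \forall c\in C.
\]
The boundary cases $m=0$ or $n=0$ are trivial (there $\Psi$ acts through scalars and $\sigma_c$ through $\epsilon$), and $P(1,1)$ is exactly the preservation hypothesis since $\Psi^{1,1}=R$.

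The engine of the induction is the compatibility of $\sigma$ with both the algebra and the coalgebra structure. On the one hand, the measuring property is precisely $\sigma_c\circ(m\otimes\id)=(m\otimes\id)\circ\sigma_c^{(3)}$ and $\sigma_c\circ(\id\otimes m)=(\id\otimes m)\circ\sigma_c^{(3)}$, where $\sigma_c^{(3)}:=\sum_{(c)}\sigma(c_{(1)})\otimes\sigma(c_{(2)})\otimes\sigma(c_{(3)})$; these follow from $\sigma(c)(aa')=\sum\sigma(c_{(1)})(a)\sigma(c_{(2)})(a')$ together with coassociativity. On the other hand, coassociativity regroups the same operator as $\sigma_c^{(3)}=\sum_{(c)}\sigma_{c_{(1)}}\otimes\sigma(c_{(2)})=\sum_{(c)}\sigma(c_{(1)})\otimes\sigma_{c_{(2)}}$, so that $(\Psi\otimes\id)\sigma_c^{(3)}=\sum\bigl(\Psi\sigma_{c_{(1)}}\bigr)\otimes\sigma(c_{(2)})$ and $(\id\otimes\Psi)\sigma_c^{(3)}=\sum\sigma(c_{(1)})\otimes\bigl(\Psi\sigma_{c_{(2)}}\bigr)$. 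Thus, whenever $P$ is already known in the bidegrees occurring on the inner factors, $\sigma_c^{(3)}$ may be commuted past $\Psi\otimes\id$ and $\id\otimes\Psi$ at the cost of replacing each $\Psi\sigma_d$ by $\sigma_d\Psi$.

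With these in hand I would run the induction. First I would establish $P(1,n)$ for all $n$ by induction on $n$: the consistency condition $\Psi(\id\otimes m)=(m\otimes\id)(\id\otimes\Psi)(\Psi\otimes\id)$, restricted to degrees $(1,1,n-1)$, writes $\Psi^{1,n}$ through $\Psi^{1,1}=R$ and $\Psi^{1,n-1}$. Composing with $\sigma_c$, pushing $\sigma_c^{(3)}$ through the two inner braidings via $P(1,1)$ and $P(1,n-1)$, and reassembling with the measuring identity yields $\Psi^{1,n}\sigma_c\,(\id\otimes m)=\sigma_c\Psi^{1,n}\,(\id\otimes m)$; since $\id\otimes m$ is onto, $P(1,n)$ follows. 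Then, fixing $n$, I would establish $P(m,n)$ by induction on $m$ using the first consistency condition $\Psi(m\otimes\id)=(\id\otimes m)(\Psi\otimes\id)(\id\otimes\Psi)$ in degrees $(m-1,1,n)$, which expresses $\Psi^{m,n}$ through $\Psi^{m-1,n}$ and $\Psi^{1,n}$; the identical manoeuvre, now invoking $P(m-1,n)$ and $P(1,n)$, gives $P(m,n)$. Together with the trivial boundary cases this covers all $(m,n)$ and proves the lemma.

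The main obstacle is the bookkeeping in the inductive step: one must match the braided regrouping of $\Psi$ (which splits $\Psi^{m,n}$ into $R$ applied to adjacent factors one pair at a time, as in \ref{consistency}) against the coalgebraic regrouping of $\sigma_c^{(3)}$ coming from coassociativity, so that precisely the inductive instances $P(m-1,n)$ and $P(1,n)$ surface with correctly matched Sweedler indices. A secondary point to state carefully is that $\sigma_c^{(3)}$ denotes the same formal operator $\sum\sigma(c_{(1)})\otimes\sigma(c_{(2)})\otimes\sigma(c_{(3)})$ restricted to differently graded triple products as the computation proceeds; this is legitimate exactly because each $\sigma(c_{(i)})$ is homogeneous. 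Once the indices are seen to align, every step is a routine application of \ref{consistency} and the measuring axiom.
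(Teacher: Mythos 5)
Your proposal is correct and takes essentially the approach the paper intends: the paper supplies no proof beyond the remark that the lemma ``follows from a simple application of equations \ref{consistency} and the braiding condition,'' and your argument is exactly that application, made precise. The reduction to bidegrees via the fact that $\sigma(c)$ preserves the grading, followed by the two-stage induction ($P(1,n)$ from the second consistency condition, then $P(m,n)$ from the first, each time commuting $\sigma_c^{(3)}$ past the inner braidings and cancelling the surjective multiplication maps), correctly and completely fills in the sketch.
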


If $C$ is a measuring coalgebra which preserves $R$, then the sub-bialgebra generated by $C$ in $P(V)$ will also preserve $R$. If sub-bialgebras $C$ and $B$ of $P(TV, TV)$ preserve $R$, then so does the bialgebra generated by $C$ and $B$.  So, there is a largest sub-bialgebra of $P(TV, TV)$ which preserves $R$.  This is our preferred candidate for the role of $R$-transformation algebra.

\begin{definition}
The \emph{$R$-transformation algebra},  denoted  $P_R(V)$, of the braided vector space $(V, R)$, is the unique maximal sub-bialgebra of $P(V)$ which preseves $R$.
\end{definition}

From the remarks above, $P_R(V)$ is also the maximal subcoalgebra of $P(V)$ which preserves $R$ - the requirement that it be a bialgebra imposes no restriction.

The coalgebras measuring $TV$ to $TV$ and preserving $R$ form a full subcategory of the coalgebras measuring $TV$ to $TV$. The definition of $P_R(V)$ shows that it is the final object in this category. We have incidentally shown that $P_R(V)$ is a final object in another category. 
\begin{proposition} Let $\mathit{H}_R$ be the category whose objects are bialgebras $H$ together with actions of $H$ on $V$ such that the action on $V\otimes V$ (induced by comultiplication in $H$) preserves $R$.   Then $P_R(V)$ is also a final object in the category $\mathit{H}_R$.  

\end{proposition}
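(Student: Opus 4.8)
The plan is to compare $H_R$ with the category of $R$-preserving measuring coalgebras, in which $P_R(V)$ has already been identified as the final object, and then to check that the canonical morphism supplied by that finality automatically carries the extra bialgebra and action data demanded by $H_R$. In other words, I would factor the universal property of $P_R(V)$ through the measuring-coalgebra picture and show that nothing is lost.

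First I would turn an object of $H_R$ into a measuring coalgebra. Given a bialgebra $H$ with an action $\phi: H \to \End(V)$, regard $\phi$ as a linear map $H \to \Hom(V, TV)$ and apply Proposition \ref{tensorV}: it extends uniquely to a measuring map $\hat\phi: H \to \End(TV)$, and, since $H$ is a bialgebra, the same proposition makes $\hat\phi$ an algebra homomorphism. By construction $\hat\phi(h)|_V = \phi(h)$, so $\hat\phi(H)(V)\subset V$, while the action of $H$ on $V\otimes V$ induced by comultiplication is precisely $(\hat\phi\otimes\hat\phi)\Delta$ restricted to $V\otimes V$. Thus the hypothesis that this action preserves $R$ is, verbatim, the condition that $(H,\hat\phi)$ preserves $(V,R)$ in the sense of the definition. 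This exhibits $(H,\hat\phi)$ as an object of the category of $R$-preserving measuring coalgebras.

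Next I would invoke the finality of $P_R(V)$ among $R$-preserving measuring coalgebras, established in the remarks preceding the proposition: there is a unique coalgebra map $\rho: H \to P_R(V)$ with $\pi\circ\rho = \hat\phi$. It remains to upgrade $\rho$ to a morphism of $H_R$, that is, to show it is a bialgebra map intertwining the two actions on $V$. Compatibility of the actions is immediate, since $(\pi|_V)\circ\rho = \hat\phi|_V = \phi$. For multiplicativity I would note that the two maps $h\otimes g \mapsto \rho(hg)$ and $h\otimes g \mapsto \rho(h)\rho(g)$ are both coalgebra maps $H\otimes H \to P(TV,TV)$, and that composing either with $\pi$ yields $h\otimes g \mapsto \hat\phi(h)\hat\phi(g)$, because both $\hat\phi$ and $\pi$ are algebra homomorphisms for composition (the latter by part $3$ of \ref{univmeasproperties}). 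By the uniqueness clause of the universal property of $P(TV,TV)$ applied to the coalgebra $H\otimes H$, the two maps coincide, so $\rho$ is an algebra homomorphism and hence a bialgebra map.

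Finally I would settle uniqueness inside $H_R$. Any $H_R$-morphism $\rho': H \to P_R(V)$ is in particular a coalgebra map, and its compatibility with the actions forces the measuring map $\pi\circ\rho'$ to restrict to $\phi$ on $V$; by the uniqueness in Proposition \ref{tensorV} this measuring map must equal $\hat\phi$, so $\pi\circ\rho' = \hat\phi$, and the uniqueness of the lift then gives $\rho'=\rho$. The one genuine obstacle is the multiplicativity step: finality in the measuring category produces only a \emph{coalgebra} map, so one must verify it automatically respects products before it can count as a morphism of $H_R$. Everything else is either definitional (the translation of the $R$-preservation condition) or a direct appeal to universal properties already proved.
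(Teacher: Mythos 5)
Your proof is correct and follows exactly the route the paper intends: the paper offers no explicit proof of this proposition, treating it as ``incidentally shown'' by the remarks preceding it, and your argument spells out precisely that implicit reasoning --- converting an object of $\mathit{H}_R$ into an $R$-preserving measuring coalgebra via Proposition \ref{tensorV}, invoking the finality of $P_R(V)$ in that category, and upgrading the resulting coalgebra map to a morphism of $\mathit{H}_R$. Your verification that the canonical map is automatically multiplicative (by comparing the two coalgebra maps $H\otimes H \to P(TV,TV)$ through the uniqueness clause of the universal property), and your uniqueness argument via the uniqueness of measuring extensions in Proposition \ref{tensorV}, are details the paper leaves entirely unstated, and they are the right way to close those gaps.
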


This will be useful in proving the duality between $P_R(V)$ and $A(R)$ in the following subsection.

\subsection{R -Admissable coactions and R-Admissable actions.}

The aim is to relate the $R$-transformation algebra $P_R(V)$ to the dual bialgebra of the Faddeev-Reshetikhin-Takhtajan (FRT) bialgebra associated with $R$, $A(R)$. To do so, we will describe in parallel a category in which $A(R)$ is an initial object and a category in which $P_R(V)$ is a final object.

The following lemma states that actions and coactions of bialgebras on a vector space extend to the whole tensor algebra.

\begin{lemma}
\label{extend(co)actions}
\begin{enumerate}
	\item Let $H$ be a bialgebra with an action $a: H\otimes V \to V$.  Comultiplication in $H$ extends this action to an action $Ta: H \otimes TV \to TV$.
	\item Let $B$ be a bialgebra with a coaction $ c:V \to V\otimes B$.  Multiplication in $B$ extends this coaction to a coaction $Tc: TV \to TV \otimes B$ which is an algebra homomorphism. 
\end{enumerate}
\end{lemma}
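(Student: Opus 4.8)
The plan is to handle the two parts by dual versions of a single idea: part 2 through the universal property of the tensor algebra, and part 1 through iterated comultiplication together with the fact that iterated comultiplication is itself an algebra homomorphism. In both cases the strategy is to define the extended structure on the generators $V$, characterise it as the unique structure map of a certain kind, and then reduce the verification of the (co)module axioms to their degree-one instances, which hold by hypothesis.

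For part 2, I would first note that $TV \otimes B$ carries the tensor-product algebra structure, and that $V \xrightarrow{c} V \otimes B \hookrightarrow TV \otimes B$ is a linear map of $V$ into this algebra. The universal property of $TV$ then yields a unique algebra homomorphism $Tc: TV \to TV \otimes B$ extending $c$; on homogeneous elements it is $Tc(v_1 \otimes \cdots \otimes v_n) = \sum v_{1(0)} \otimes \cdots \otimes v_{n(0)} \otimes v_{1(1)} \cdots v_{n(1)}$, with $Tc(1) = 1 \otimes 1_B$. To check the coassociativity axiom I would observe that both $(Tc \otimes \id_B) \circ Tc$ and $(\id_{TV} \otimes \Delta_B) \circ Tc$ are algebra homomorphisms $TV \to TV \otimes B \otimes B$ (the first because $Tc$ is, the second because $\Delta_B$ is), and that they agree on the generators $V$ precisely because $c$ is a coaction on $V$; by uniqueness they coincide on all of $TV$. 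The identical argument with $\epsilon_B$ gives $(\id \otimes \epsilon_B) \circ Tc = \id_{TV}$, so no manipulation beyond degree one is ever needed.

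For part 1, I would define $Ta$ on $V^{\otimes n}$ by $h \cdot (v_1 \otimes \cdots \otimes v_n) = \sum h_{(1)} v_1 \otimes \cdots \otimes h_{(n)} v_n$, the sum running over the iterated coproduct $\Delta^{(n-1)}(h)$ (well defined by coassociativity), with $h \cdot 1 = \epsilon(h) 1$ in degree zero. The unit axiom follows because $\Delta$ is unital, so $\Delta^{(n-1)}(1_H) = 1 \otimes \cdots \otimes 1$. For associativity I would expand $g \cdot (h \cdot x)$, use the module axiom on each tensor factor to rewrite $g_{(i)}(h_{(i)} v_i) = (g_{(i)} h_{(i)}) v_i$, and then invoke the fact that $\Delta^{(n-1)}: H \to H^{\otimes n}$ is an algebra homomorphism (being an iterate of $\Delta$) to recognise $\sum g_{(i)} h_{(i)}$ as $\Delta^{(n-1)}(gh)$, yielding $(gh) \cdot x$.

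The construction contains no genuine difficulty, and I expect the only (minor) obstacle to be stating cleanly the two structural facts that carry the argument — that iterated comultiplication is an algebra homomorphism, and that a tensor product of algebra maps is again one — rather than verifying the axioms componentwise. Both parts also admit a conceptual phrasing that I would mention: the bialgebra structure makes the category of $H$-modules (resp. $B$-comodules) monoidal, so each tensor power $V^{\otimes n}$ is a module (resp. comodule), and $TV = \bigoplus_n V^{\otimes n}$ inherits the structure as a direct sum, the multiplication being a morphism in the relevant category.
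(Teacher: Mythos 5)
Your proof is correct, and it splits cleanly along the paper's two parts, but the route for part 1 is genuinely more hands-on than the paper's. For part 2 you argue exactly as the paper does: the composite $V \xrightarrow{c} V\otimes B \hookrightarrow TV\otimes B$ lands in an algebra, the universal property of $TV$ produces the algebra homomorphism $Tc$, and the comodule axioms reduce to comparing algebra homomorphisms on generators (the paper compresses all of this into the phrase ``the defining property of the tensor product''). For part 1, the paper gives no computation at all: it observes that the statement is a restatement of the first part of Proposition \ref{tensorV}, i.e.\ one views the action as a map $H \to \Hom(V,TV)$, equivalently $V \to \Hom(H,TV)$ with its convolution algebra structure, extends by the universal property of $TV$ to a measuring $\hat a \colon H \to \Hom(TV,TV)$, and then uses the bialgebra hypothesis (via the second part of \ref{tensorV} and the appendix) to conclude $\hat a$ is an algebra homomorphism, hence an action. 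You instead write the extension explicitly, $h\cdot(v_1\otimes\cdots\otimes v_n)=\sum h_{(1)}v_1\otimes\cdots\otimes h_{(n)}v_n$, and verify the module axioms directly from the fact that $\Delta^{(n-1)}\colon H \to H^{\otimes n}$ is an algebra homomorphism. The two constructions yield the same action --- unwinding the measuring condition on a product of $n$ elements of $V$ gives precisely your iterated-coproduct formula --- so nothing is lost; your version is self-contained and exhibits the formula, while the paper's version gives uniqueness of the extension for free and makes manifest the measuring property $h\cdot(xy)=\sum (h_{(1)}\cdot x)(h_{(2)}\cdot y)$, i.e.\ that $TV$ becomes an $H$-module algebra, which is the form in which this lemma is actually used later (for instance in the definition of a coalgebra preserving $(V,R)$). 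If you want your argument to plug into those later sections, it is worth recording that extra property explicitly; it follows from your formula by splitting the iterated coproduct, but it is not among the axioms you checked.
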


\begin{proof}
 The first statement is a restatement of the first part of \ref{tensorV}, and is only restated for comparison with the second.  The second makes use of the defining property of the tensor product.
\end{proof}

Very elementary algebra establishes the equivalence of an action $a: H \otimes V \to V$ with an algebra homomorphism $a: H \to V \otimes V^*$.  The coalgebra structure of $V^* \otimes V$, while equally elementary feels less familiar: comultiplication in $V^* \otimes V$ arises from the unit $u$ in the algebra $End(V) = V\otimes V^*$:
 \[
\bigtriangleup = 1\otimes u^* \otimes 1: V^* \otimes V \to V^* \otimes V \otimes V^* \otimes V.
 \]
 The counit $\epsilon$ is simply the evaluation map. The argument relating actions with homorphisms into $V^* \otimes V$ dualizes to give the result that coactions $c:V \to V \otimes B$ correspond to coalgebra maps $c: V^* \otimes V \to B$.

Using this definition of actions and coactions, the content of \ref{extend(co)actions} could be expressed by saying that the action $a$ (resp coaction $c$) extends to maps 

\begin{equation}
\label{tensor(co)action}
T^ia: H \to \otimes^i (V\otimes_a V^*), \ \ \  T^ic:\otimes(V^*\otimes_c V) \to B
\end{equation}
for all $i$.  

 We define \emph{admissable} actions and coactions to be those which preserve R, as follows.

\begin{definition} If $B$ be a bialgebra for which $V$ is a comodule, say the coaction $c:V \to V\otimes B$ is \emph{(V,R) admissable} (or simply admissable) if

\begin{center}
\setlength{\unitlength}{1mm}
\begin{picture}(100,40)

	\put(30,10){\makebox(0,0){$V\otimes V$}}
	\put(30,30){\makebox(0,0){$V\otimes V\otimes B$}}
	\put(80,30){\makebox(0,0){$V\otimes V\otimes B$}}
	\put(80,10){\makebox(0,0){$V\otimes V$}}
	
	\put(30,25){\vector(0,-1){10}}
	\put(40,30){\vector(1,0){27}}
	\put(80,25){\vector(0,-1){10}}
	\put(37,10){\vector(1,0){35}}

	\put(55,33){\makebox(0,0){$c$}}
	\put(25,20){\makebox(0,0){$R$}}
	\put(55,13){\makebox(0,0){$c$}}
	\put(87,20){\makebox(0,0){$R\otimes 1_B$}}
\end{picture}	
\end{center}

If $H$ is a bialgebra for which $V$ is a module, say the $a: H \otimes V \to B$ is \emph{(V,R) admissable} (or simply admissable) if 
\[
a \circ 1_H \otimes R = R \circ a : H\otimes V \otimes V \to V \otimes V.
\]

\begin{center}
\setlength{\unitlength}{1mm}
\begin{picture}(100,40)

	\put(30,10){\makebox(0,0){$V\otimes V$}}
	\put(30,30){\makebox(0,0){$H\otimes V\otimes V\otimes B$}}
	\put(80,30){\makebox(0,0){$H\otimes V\otimes V\otimes B$}}
	\put(80,10){\makebox(0,0){$V\otimes V$}}
	
	\put(30,25){\vector(0,-1){10}}
	\put(45,30){\vector(1,0){18}}
	\put(80,25){\vector(0,-1){10}}
	\put(37,10){\vector(1,0){35}}

	\put(55,33){\makebox(0,0){$c$}}
	\put(25,20){\makebox(0,0){$R$}}
	\put(55,13){\makebox(0,0){$c$}}
	\put(87,20){\makebox(0,0){$R\otimes 1_B$}}
\end{picture}	
\end{center}

\end{definition}

Using \ref{tensor(co)action}, the definition of admissable translates to a statement that $c$ factors through a coequalizer (resp. the image of $a$ lies in an equalizer).
 
 Denote by $\tau$ the map which twists the middle two factors
 \[
 \tau = 1_{V^*} \otimes twist \otimes 1_V : V^* \otimes V \otimes V^* \otimes V \to V^* \otimes V \otimes V^* \otimes V.
 \]
 Define two maps from $V^* \otimes V \otimes V^* \otimes V$ to itself:
 \[ 
 \alpha = \tau \circ R^* \otimes 1_{V\otimes V} \circ \tau,  \\\ \beta = \tau \circ 1_{V^* \otimes V^*} \otimes R \circ \tau.
 \]

Define $C$ to be the coequalizer of $\alpha$ and $\beta$, and $E$ to be the equalizer, thus
\begin{equation}
\label{coequalizer}
C = V^* \otimes V \otimes V^* \otimes V/im (\alpha - \beta)
\end{equation}
and 
\begin{equation}
\label{equalizer}
E = \{ w \in V^* \otimes V \otimes V^* \otimes V : \alpha w = \beta w \}.
\end{equation}
The translation of the statement that $a$ or $c$ preserves R can then be stated as a lemma.
\begin{lemma}
\begin{enumerate}
	\item A coaction $c$ of a bialgebra B on V is admissable if and only if 
	\[
	T^2c: (V^* \otimes V)\otimes (V^* \otimes V)  \to B 
	\]
	factors through C.
	\item An action $a$ of a bialebra H on $V^*$ is admissable if and only if 
	\[
	im T^2a(H) \subset E \subset (V\otimes V^*)\otimes (V^* \otimes V).
	\]
\end{enumerate}
\end{lemma}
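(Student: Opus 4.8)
The plan is to reduce each biconditional to the universal property of the relevant (co)equalizer and then to a single bookkeeping identity that matches the two maps $\alpha,\beta$ with the two legs of the admissability square. For part (1), the universal property of the coequalizer $C$ of \ref{coequalizer} says that $T^2c$ descends along the quotient $q:V^*\otimes V\otimes V^*\otimes V\to C$ precisely when $T^2c$ annihilates $\mathrm{im}(\alpha-\beta)$, i.e. precisely when $T^2c\circ\alpha=T^2c\circ\beta$. Dually, for part (2), the universal property of the equalizer $E$ of \ref{equalizer} says that $\mathrm{im}\,T^2a$ is contained in $E$ precisely when $\alpha\circ T^2a=\beta\circ T^2a$. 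Thus in both cases the categorical condition is immediately equivalent to the statement that $\alpha$ and $\beta$ agree after composing with the (co)action, and the only real content is to see that this equality is the admissability square.

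Next I would carry out that identification. The coaction on $V\otimes V$ induced by comultiplication in $B$ (see \ref{extend(co)actions}) corresponds under the canonical pairing to $T^2c:(V^*\otimes V)\otimes(V^*\otimes V)\to B$, and I claim that post-composing this coaction with $R\otimes 1_B$ transports to $T^2c\circ\alpha$, while pre-composing with $R$ transports to $T^2c\circ\beta$. The role of the twist $\tau$ is exactly to reorder $V^*\otimes V\otimes V^*\otimes V$ into $V^*\otimes V^*\otimes V\otimes V$ so that $R$ may act on the two $V$-slots (yielding $\beta$) and its transpose $R^*$ on the two $V^*$-slots (yielding $\alpha$), the outer $\tau$ restoring the original ordering. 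The cleanest way to pin this down is to fix a basis $\{e_i\}$ of $V$, write $c(e_j)=\sum_i e_i\otimes t^i_j$, and compute both composites on $e^i\otimes e_j\otimes e^k\otimes e_l$: one finds $T^2c\circ\alpha$ gives $\sum_{i'k'}R^{ik}_{i'k'}\,t^{i'}_j t^{k'}_l$ and $T^2c\circ\beta$ gives $\sum_{j'l'}R^{j'l'}_{jl}\,t^i_{j'}t^k_{l'}$, so that their equality is exactly the component form of the relation $RTT=TTR$, which is the admissability square. Part (2) I would then obtain as the linear dual of part (1): the action of $H$ on $V^*$ is the transpose of the structure on $V$, the space $(V^*\otimes V)^{\otimes 2}$ is self-dual under evaluation, and $E=\ker(\alpha-\beta)$ is the annihilator of $\mathrm{im}(\alpha-\beta)$, so the equalizer condition is the transpose of the coequalizer condition and again expresses admissability.

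The main obstacle will be this second step: getting the transposes and the two twists in the definitions of $\alpha$ and $\beta$ exactly right. In particular one must check that acting by $R$ on the codomain $V\otimes V$ dualizes to acting by the transpose $R^*$ on the two $V^*$-slots (the map $\alpha$) rather than by $R$ itself, and that conjugating by $\tau$ correctly brings the like factors adjacent and then separates them again. Once this index-and-transpose bookkeeping is settled, both equivalences are forced, with no further input needed.
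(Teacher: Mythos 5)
Your proposal is correct, and its skeleton --- reduce each biconditional to the universal property of the (co)equalizer, then identify the equality of the $\alpha$- and $\beta$-composites with the admissability square --- is the same definition-unraveling the paper performs; but you distribute the work differently, and the difference is worth recording. The paper proves only the \emph{action} statement, basis-free: it writes the action as contraction against $V\otimes V^*$ and observes that the square built from $1\otimes 1\otimes R$ commutes if and only if the square built from $1\otimes R^*\otimes 1$ commutes (this is precisely the defining property of the transpose), so admissability reduces to agreement of $R\otimes 1\otimes 1$ and $1\otimes R^*\otimes 1$ on the image of $H\otimes H$, i.e.\ membership in the equalizer $E$; the coaction half is left implicit. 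You instead prove the \emph{coaction} half directly in coordinates --- your identification of $T^2c\circ\alpha$ with $\sum R^{ik}_{i'k'}t^{i'}_jt^{k'}_l$ and of $T^2c\circ\beta$ with $\sum R^{j'l'}_{jl}t^i_{j'}t^k_{l'}$ is correct, so the equality is literally the RTT relation --- and then transfer to actions by duality. That duality step is legitimate, but it rests on one fact you flag without verifying: under the evaluation pairing on $(V^*\otimes V)^{\otimes 2}$ the maps $\alpha$ and $\beta$ are adjoint \emph{to each other} (neither is self-adjoint), and it is exactly this exchange that makes $E = \ker(\alpha-\beta)$ equal to the annihilator of $\mathrm{im}(\alpha-\beta)$; one should check this (it does hold, by the same index computation you already did) before declaring part (2) closed. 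Each route buys something: the paper's contraction argument avoids bases and makes the role of $R^*$ transparent, while yours makes visible that admissability \emph{is} the RTT relation, which is the fact that ultimately ties this lemma to the FRT presentation of $A(R)$.
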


\begin{proof}
Like all proofs of this nature, there is no difficulty beyond that of unravelling the definitions and displaying the material in a manner in which the claim becomes obvious.  We will show the statement for an action to be admissable.

The ingredients here are the following.  First, the comultiplication $H$ is simply

\begin{center}
\setlength{\unitlength}{1mm}
\begin{picture}(110,50)

	\put(30,15){\makebox(0,0){$V\otimes V^*$}}
	\put(30,35){\makebox(0,0){$H$}}
	\put(80,35){\makebox(0,0){$H\otimes H$}}
	\put(80,15){\makebox(0,0){$(V\otimes V^*)\otimes (V\otimes V^*)$}}

	\put(30,30){\vector(0,-1){10}}
	\put(35,35){\vector(1,0){35}}
	\put(80,30){\vector(0,-1){10}}
	\put(40,15){\vector(1,0){20}}

	\put(25,25){\makebox(0,0){$r$}}
	\put(55,18){\makebox(0,0){$c$}}
	\put(72,25){\makebox(0,0){$r\otimes r$}}
\end{picture}	
\end{center}

where $c$ is $1_V \otimes u_{V^*\otimes V} \otimes 1_V^*$.  The action of $H$ on $V$ is then

\begin{center}
\setlength{\unitlength}{1mm}
\begin{picture}(110,50)

	\put(30,15){\makebox(0,0){$V\otimes V^*\otimes V$}}
	\put(30,35){\makebox(0,0){$H\otimes V$}}
	\put(60,15){\makebox(0,0){$V$}}

	\put(30,30){\vector(0,-1){10}}
	\put(40,15){\vector(1,0){15}}
	
	\put(38,30){\vector(4,-3){15}}

	\put(25,25){\makebox(0,0){$r$}}
	\put(45,18){\makebox(0,0){$a$}}

\end{picture}	
\end{center}

where $a$ is contraction on the second and third factors.  Writing $(V\otimes V^*)\otimes (V\otimes V^*)$ as $V\otimes V\otimes V^* \otimes V^*$, the admissability condition can set in the following diagram
\begin{center}
\setlength{\unitlength}{1mm}
\begin{picture}(140,50)

	\put(20,25){\makebox(0,0){$H\otimes H\otimes V\otimes V$}}
	\put(70,15){\makebox(0,0){$V\otimes V \otimes V^* \otimes V^* \otimes V\otimes V$}}
	\put(70,35){\makebox(0,0){$V\otimes V \otimes V^* \otimes V^* \otimes V\otimes V$}}
	\put(120,35){\makebox(0,0){$V\otimes V$}}
	\put(120,15){\makebox(0,0){$V\otimes V$}}

	\put(80,30){\vector(0,-1){10}}
	\put(60,30){\vector(0,-1){10}}
	\put(70,30){\vector(0,-1){10}}
	\put(100,35){\vector(1,0){10}}
	\put(120,30){\vector(0,-1){10}}
	\put(100,15){\vector(1,0){10}}
	
	\put(35,27){\vector(3,2){10}}
	\put(35,23){\vector(3,-2){10}}

	\put(55,25){\makebox(0,0){$a$}}
	\put(65,25){\makebox(0,0){$b$}}
	\put(75,25){\makebox(0,0){$c$}}
	\put(115,25){\makebox(0,0){$R$}}
\end{picture}	
\end{center}

Here $a = R \otimes 1\otimes 1, b = 1 \otimes R^* \otimes 1$ and $c = 1 \otimes 1\otimes R$. The horizontal maps are the action - in this case given by contraction on the last four factors.  Evidently the diagram commutes if the map $a$ is used.  Admissability is the statement that the diagram also commutes for the map $c$.  But by the definition of $R^*$, this commutes if and only if the square with $b$ commutes.  Thus the diagram commutes if, on the image of $H\otimes H\otimes, a = b$.  But this is exactly the statement that the image of $H\otimes H$ in the coequalizer. 

\end{proof}

By its construction, $P_R(V)$ is readily seen to be the final object in the category of bialgebras $H$ equipped with a admissable action on $V$.  The initial object in the category of bialgebras $B$ equipped with admisable coactions on $V$ is exactly the FRT algebra.  To see this, it is only necessary to give the standard definition of the FRT algebra $A(R)$ in a basis independent fashion.

By \ref{tensorW} a coalgebra structure on $D$ endows the tensor algebra $TD$ with the structure of a bialgebra. Notice that if $K$ is a coideal in $TD$, the ideal generated by $K$ in $TD$ is also a coideal.  A routine calculation shows that $im(\alpha - \beta)$ is a coideal in $V^*\otimes V$. 

\begin{definition}
Let $J$ denote the coideal generated by $im(\alpha - \beta)$. Define $A(R)$, the FRT algebra, to be the bialgebra 
\[
A(R) = T(V^*\otimes V)/J.
\]
\end{definition}
The maps 
\[
j:V^*\otimes V \to A(R) 
\]

is evidently a coalgebra map.  Moreover, in the light of the preceding lemma, we have the following proposition.

\begin{proposition}
\label{admissable}
(The universal property of the FRT algebra.) If $B$ is a bialgebra with an admissable coaction $c$ on V, then there is a unique map 
\[
\rho :A(R) \to B
\]
such that 
\begin{center}
\setlength{\unitlength}{1mm}
\begin{picture}(110,50)

	\put(30,35){\makebox(0,0){$V$}}
	\put(30,15){\makebox(0,0){$V\otimes A(R)$}}
	\put(60,15){\makebox(0,0){$V\otimes B$}}

	\put(30,30){\vector(0,-1){10}}
	\put(40,15){\vector(1,0){13}}
	\put(38,30){\vector(4,-3){15}}
	
	\put(25,25){\makebox(0,0){$r$}}
	\put(45,18){\makebox(0,0){$a$}}

\end{picture}	
\end{center}

\end{proposition}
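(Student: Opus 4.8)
The plan is to run the data through two universal properties in sequence: first the correspondence between coactions and coalgebra maps into $B$, then the universal (free) property of the tensor bialgebra $T(V^*\otimes V)$, and finally to read off the defining relations of $A(R)$ directly from the preceding lemma. Concretely, the admissable coaction $c:V\to V\otimes B$ corresponds, as recalled above, to a coalgebra map $\hat c:V^*\otimes V\to B$. Since $V^*\otimes V$ is a coalgebra and $B$ is a bialgebra, I would invoke the tensor-bialgebra structure of Proposition \ref{tensorW}: the free-algebra property extends $\hat c$ uniquely to an algebra homomorphism $\bar c:T(V^*\otimes V)\to B$, and because $\hat c$ is a coalgebra map while $T\triangle$ agrees with $\triangle_{V^*\otimes V}$ on the degree-one generators, the two algebra maps $\triangle_B\circ\bar c$ and $(\bar c\otimes\bar c)\circ T\triangle$ agree on generators, hence everywhere. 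Thus $\bar c$ is a bialgebra map.

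The second step is to show that $\bar c$ annihilates $J$, so that it descends to $A(R)$. Because $\bar c$ is an algebra homomorphism and $J$ is the ideal generated by $\mathrm{im}(\alpha-\beta)\subset(V^*\otimes V)^{\otimes 2}$, it suffices to check that $\bar c$ vanishes on $\mathrm{im}(\alpha-\beta)$. But the restriction of $\bar c$ to the degree-two component is exactly the map $T^2c:(V^*\otimes V)\otimes(V^*\otimes V)\to B$, and by the preceding lemma the admissability of $c$ is equivalent to $T^2c$ factoring through the coequalizer $C=(V^*\otimes V)^{\otimes 2}/\mathrm{im}(\alpha-\beta)$, i.e. to $T^2c$ vanishing on $\mathrm{im}(\alpha-\beta)$. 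Hence $\bar c(J)=0$, and since $J$ is both an ideal and a coideal, $\bar c$ descends to a bialgebra map $\rho:A(R)=T(V^*\otimes V)/J\to B$.

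For commutativity and uniqueness: by construction $\rho\circ j=\hat c$, where $j:V^*\otimes V\to A(R)$ is the canonical coalgebra map, which corresponds to the canonical coaction $r$. Transporting the identity $\rho\circ j=\hat c$ back through the (natural) coaction/coalgebra-map correspondence yields $(1_V\otimes\rho)\circ r=c$, which is precisely the commutativity of the triangle in the statement. For uniqueness, note that $V^*\otimes V$ generates $A(R)$ as an algebra, so any bialgebra map $A(R)\to B$ is determined by its restriction to $V^*\otimes V$; any map making the diagram commute must restrict to $\hat c$ there, and therefore equals $\rho$.

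The only step needing genuine care is the second one: identifying the degree-two restriction of $\bar c$ with $T^2c$, and matching the phrase ``factors through the coequalizer $C$'' with ``annihilates the generators $\mathrm{im}(\alpha-\beta)$ of $J$.'' Everything else is the formal bookkeeping of free constructions and the two correspondences involved. The main obstacle, such as it is, lies in keeping the identifications $V^*\otimes V\cong\Hom(V,V)$, the coalgebra structure on $V^*\otimes V$, and the reshuffling $(V^*\otimes V)^{\otimes 2}\cong V^*\otimes V\otimes V^*\otimes V$ consistent, so that $\alpha,\beta$ and the operators $R,R^*$ act on exactly the factors the preceding lemma intends; once those identifications are fixed, the argument is essentially tautological.
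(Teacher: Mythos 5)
Your proof is correct and takes exactly the route the paper intends: the paper offers no separate argument for Proposition \ref{admissable}, simply asserting it ``in the light of the preceding lemma,'' and that implicit argument is precisely yours --- convert the coaction to a coalgebra map $V^*\otimes V\to B$, extend freely to a bialgebra map on $T(V^*\otimes V)$, use the lemma's coequalizer characterization of admissability to see that the ideal-coideal $J$ is killed, and obtain uniqueness because $V^*\otimes V$ generates $A(R)$ as an algebra. Your write-up fills in the details (the degree-two restriction being $T^2c$, the bialgebra-map check on generators) that the paper leaves to the reader, so it matches rather than diverges from the paper's approach.
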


Thus $A(R)$ as constructed as above has the property expected of the FRT construction, and by uniqueness, this construction is equivalent to the more common basis dependent definition of $A(R)$.

The main theorem is the observation that \ref{unification} and \ref{admissable} combine to allow a simple identification of the $R$-transformation algebra and the FRT algebra.

\begin{theorem}
\[
P_{R^*}(V^*) = A(R)^\circ .
\]
\end{theorem}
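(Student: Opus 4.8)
The plan is to realize both sides as sub-bialgebras of the single ambient bialgebra $P(V^*)$ and to show they are cut out by the same condition. First I would apply Theorem \ref{unification} with $V$ replaced by $V^*$. Identifying $(V^*)^* \cong V$ (this is where the finite-dimensionality of $V$ enters), this yields an isomorphism of bialgebras
\[
P(V^*) = \bigl(T_a(V^* \otimes_c V)\bigr)^\circ .
\]
Since the FRT bialgebra is the quotient $A(R) = T(V^* \otimes V)/J$ by the ideal $J$ (which is also a coideal, so that the quotient is a bialgebra), its finite dual is computed as the annihilator $A(R)^\circ = \{\, f \in (T(V^* \otimes V))^\circ : f|_J = 0 \,\}$, which is exactly the sub-bialgebra of $P(V^*)$ consisting of functionals vanishing on $J$. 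Thus both $P_{R^*}(V^*)$ and $A(R)^\circ$ are sub-bialgebras of $P(V^*)$, and it suffices to prove that they contain the same elements.

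Next I would match the two defining conditions. On one side $P_{R^*}(V^*)$ is the maximal sub-bialgebra of $P(V^*)$ preserving the braiding $R^*$; on the other $A(R)^\circ$ is the maximal sub-bialgebra annihilating $J$. Because $J$ is generated by $\mathrm{im}(\alpha - \beta)$ and both candidates are full sub-bialgebras, it is enough to match the conditions on generators, namely to show that an element of $P(V^*)$ preserves $R^*$ if and only if it annihilates $\mathrm{im}(\alpha - \beta)$. This is where the equalizer/coequalizer lemma and the universal property \ref{admissable} enter. Under the established correspondence between an action on $V^*$ and the dual coaction on $V$, and between the equalizer $E$ of \ref{equalizer} and the coequalizer $C$ of \ref{coequalizer}, the braiding condition defining ``preserves $R^*$'' transposes term by term into the statement that the corresponding coaction factors through $C$ — that is, into admissibility, hence into vanishing on $\mathrm{im}(\alpha - \beta)$.

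Equivalently, and more conceptually, I would phrase the whole argument as a duality of universal objects: finite dualization $(-)^\circ$ carries bialgebras equipped with an admissible $R^*$-action on $V^*$ contravariantly to bialgebras equipped with an admissible $R$-coaction on $V$, sending the final object of the former category to the initial object of the latter. Since $P_{R^*}(V^*)$ is final among admissible $R^*$-actions and $A(R)$ is initial among admissible $R$-coactions by \ref{admissable}, this alone forces the identification. I would nonetheless prefer to conclude via the direct subset comparison of the first two paragraphs, since it avoids having to argue that $(-)^\circ$ is involutive on these particular bialgebras.

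The main obstacle is the transposition in the middle step: verifying that the braiding-preservation condition of the transformation-algebra picture dualizes precisely onto $\mathrm{im}(\alpha - \beta)$, with the conjugations by $\tau$ in the definitions of $\alpha$ and $\beta$ and the exchange $R \leftrightarrow R^*$ all lining up correctly under the pairing. One must also check that annihilating the generating set $\mathrm{im}(\alpha - \beta)$ genuinely forces annihilation of the whole ideal $J$; this is legitimate precisely because $f$ ranges over a sub-bialgebra, so the convolution structure propagates the vanishing from the generators to the ideal they generate. Once this bookkeeping is settled, the identification is, as the preceding discussion suggests, essentially tautological.
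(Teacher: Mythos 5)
Your proposal is correct, and it rests on the same two pillars as the paper's own proof --- Theorem \ref{unification}, identifying $P(V^*)$ with $(T_a(V^*\otimes_c V))^\circ$, and the equalizer/coequalizer dictionary for admissibility behind Proposition \ref{admissable} --- but the logical packaging is genuinely different. The paper argues by universal properties: it dualizes the admissible coaction of $A(R)$ on $V$ to an admissible action of $A(R)^\circ$ on $V^*$, whence $A(R)^\circ\subseteq P_{R^*}(V^*)$ by maximality; in the reverse direction it dualizes the action of $P_{R^*}(V^*)$ to a coaction factoring through the coequalizer, so that initiality of $A(R)$ produces a map $A(R)\to(P_{R^*}(V^*))^\circ$. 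That last map still has to be converted into the inclusion $P_{R^*}(V^*)\subseteq A(R)^\circ$, a double-dual step the paper leaves implicit (its written proof in fact breaks off at exactly this point). Your subset comparison inside the common ambient bialgebra $(T(V^*\otimes V))^\circ$ avoids precisely this, as you note, and so gives a cleaner treatment of the harder inclusion: an element of $P_{R^*}(V^*)$ has degree-two restriction killing $\mathrm{im}(\alpha-\beta)$, and membership in a subcoalgebra upgrades this to vanishing on all of $J$, via $f(xgy)=\sum f_{(1)}(x)\,f_{(2)}(g)\,f_{(3)}(y)$ with $f_{(2)}$ again in the subcoalgebra. One terminological quibble: it is this coproduct of the finite dual (the one dual to multiplication in $T(V^*\otimes V)$), not the convolution product, that propagates the vanishing from the generators to the ideal; but the substance of your remark --- that the propagation is legitimate only because $f$ ranges over a sub-bialgebra, and would fail for an isolated functional --- is exactly right. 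What the paper's route buys is brevity and the conceptual picture of a final object and an initial object exchanged by $(-)^\circ$; what yours buys is an airtight converse inclusion, at the modest cost of the bookkeeping you describe for transposing the braiding-commutation condition onto $\mathrm{im}(\alpha-\beta)$.
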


\begin{remark}
\begin{enumerate}
	\item It is easy to verify that if $R:V\otimes V \to V\otimes V$ is a braiding (of a finite dimensional vector space) then the dual map $R^*:V^*\otimes V^* \to V^*\otimes V^*$ is also a braiding.
	\item Coactions on $V$ to correspond to actions on $V^*$ hence the theorem is stated in terms of $P_{R^*}(V^*)$.  However, $P_R(V) = P_{R^*}(V^*)^{op}$. See lemma \ref{conclusion}.
	\end{enumerate}
\end{remark}

\begin{proof}

From \ref{unification} we have that $P(V^*) = (T(V^*\otimes V))^\circ$. The essential step is to apply the content of \ref{admissable}.  

If $B$ is a bialgebra and $c:V \to V\otimes B$ is an admissable coaction, then 

\[
T^2c: (V^*\otimes V)\otimes (V^*\otimes V) \to B
\]

factors through $(V^*\otimes V)\otimes(V^*\otimes V)/J$ where $J$ is $ker(\alpha - \beta)$ from \ref{coequalizer}.  Thus

\[
c^\circ:B^\circ \to (V^*\otimes V)^* 
\]
has its image in $J^\perp$, that is, the action of $B^\circ$ is admissable.  Thus, in particular, the action of $A(R)^\circ$ on $V^*$ is admissable, and $A(R)^\circ \subset (T(V^*\otimes V)^\circ)$ is contained in $P_{R^*}(V)$.  

If $a:H \to V^*\otimes V$ is an admissable action of a bialgebra $H$ on $V^*$, then 
\[
T^2a: H \to (V^*\otimes V)\otimes(V^*\otimes V)
\]
\end{proof}

has its image in $E$ of \ref{equalizer}.  Then, since multiplication in$H$ induces $H^\circ \to H^\circ \otimes H^\circ$, the map $T^2a$ dualizes to give a coaction
\[
\psi =(T^2a)^\circ:  (V^*\otimes V)\otimes(V^*\otimes V)^* \to H^\circ.
\]
Since $T^2a$ had its image in $E$, $\psi$ factors through $C$ of \ref{coequalizer}.  In particular, this holds for $H = P_{R^*}(V)$, and hence $\psi$ factors through a map from $A(R)$ to $(P_{R^*}(V))$.

\begin{corollary}

  $P_R(V)^{op} = A(R)^\circ$

\end{corollary}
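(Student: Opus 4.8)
The plan is to obtain the corollary as an immediate consequence of the main theorem together with the duality relation $P_R(V) = P_{R^*}(V^*)^{op}$ recorded in Remark~2 (Lemma~\ref{conclusion}). Since $(\cdot)^{op}$ is an involution on bialgebras, applying it to that relation gives $P_R(V)^{op} = (P_{R^*}(V^*)^{op})^{op} = P_{R^*}(V^*)$, and the main theorem identifies the right-hand side with $A(R)^\circ$. Thus the only genuine content to supply is the duality relation itself.

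First I would set up the adjoint correspondence. A linear endomorphism $f : V \to V$ has a transpose $f^* : V^* \to V^*$, and this assignment is a linear isomorphism $\End(V) \cong \End(V^*)$ that reverses composition: $(f \circ g)^* = g^* \circ f^*$. Under the identifications $\End(V) = V \otimes V^*$ and $\End(V^*) = V^* \otimes V$ used throughout, this is exactly the flip of the two tensor factors, and it intertwines the coalgebra structures on the two spaces. Consequently, passing to the universal measuring coalgebras, the transpose induces an isomorphism of coalgebras $P(V) \cong P(V^*)$ that carries the composition product of $P(V)$ to the opposite of the composition product of $P(V^*)$; that is, $P(V) \cong P(V^*)^{op}$ as bialgebras.

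Next I would check that this isomorphism carries the $R$-preserving condition on $V$ to the $R^*$-preserving condition on $V^*$. By definition a coalgebra $C$ preserves $(V,R)$ when $R(\sigma \otimes \sigma)(\Delta c) = (\sigma \otimes \sigma)(\Delta c) R$ on $V \otimes V$. Transposing this identity and using that the transpose of $R$ acting on $V \otimes V$ is $R^*$ acting on $V^* \otimes V^*$ (Remark~1), the condition becomes precisely the statement that the transported coaction preserves $(V^*, R^*)$. Hence the transpose restricts to an isomorphism between the maximal $R$-preserving sub-bialgebra $P_R(V)$ of $P(V)$ and the maximal $R^*$-preserving sub-bialgebra $P_{R^*}(V^*)$ of $P(V^*)$, again reversing the product. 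This establishes $P_R(V) = P_{R^*}(V^*)^{op}$ and completes the corollary.

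The main obstacle is the careful bookkeeping in the middle step: one must verify that transposition genuinely intertwines the comultiplications on $V \otimes V^*$ and $V^* \otimes V$ (which, as noted earlier, both arise from the unit of $\End$ and are therefore dual to composition), and that the braiding-preservation identity behaves correctly under taking adjoints of a twofold tensor. Once these compatibilities are in place the reversal of the multiplicative structure---and hence the appearance of $(\cdot)^{op}$---is forced, and the corollary follows formally.
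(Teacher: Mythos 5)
Your proposal is correct and follows essentially the same route as the paper: the corollary is deduced from the main theorem $P_{R^*}(V^*) = A(R)^\circ$ together with the duality lemma $P_R(V) \cong P_{R^*}(V^*)^{op}$, which you prove exactly as the paper does---by transposition $\rho(c) \mapsto (\rho(c))^*$, noting that transposition reverses composition (forcing the $op$) and carries $R$-preservation to $R^*$-preservation. Your write-up is in fact slightly more careful than the paper's, which misstates the reversal identity as $(\alpha\beta)^* = \alpha^*\beta^*$ and leaves the final involution step $\left(P_{R^*}(V^*)^{op}\right)^{op} = P_{R^*}(V^*)$ implicit.
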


The corollary is an immediate consequence of the following lemma.

\begin{lemma}
\label{conclusion}
$P_R(V) \cong P_{R^*}(V^*)^{op}$.
\end{lemma}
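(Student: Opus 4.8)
The plan is to prove this through the categorical characterization of $P_R(V)$ as the final object of the category $\mathit{H}_R$ of bialgebras acting on $V$ so as to preserve $R$, and to exhibit an isomorphism between $\mathit{H}_R$ and $\mathit{H}_{R^*}$, the analogous category built from the braided space $(V^*, R^*)$, under which $P_R(V)$ is carried to $P_{R^*}(V^*)^{op}$. Since $V$ is finite-dimensional, the contragredient construction supplies this isomorphism, and finality does the rest.

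First I would recall that an action $a: H \otimes V \to V$ is the same datum as an algebra homomorphism $a: H \to \End(V) = V \otimes V^*$, and define the \emph{contragredient} action on $V^*$ by transposition, $a^t(h) = a(h)^*$. Because transposition reverses composition, $a^t(hh') = a(h')^* a(h)^* = a^t(h')\,a^t(h)$, so $a^t$ is an \emph{anti}-homomorphism on $H$ and hence a genuine action of the opposite bialgebra $H^{op}$ on $V^*$. This is the conceptual source of the $^{op}$ in the statement. A morphism $\phi: H \to H'$ of $\mathit{H}_R$ is sent to the same underlying map regarded as $\phi: H^{op} \to H'^{op}$, which is again a bialgebra map intertwining the contragredient actions; this assignment on morphisms is strictly involutive.

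Next I would check that the construction respects braidings. Since $H^{op}$ and $H$ share the same comultiplication, the action of $h$ on $V^* \otimes V^*$ is $\sum a^t(h_{(1)}) \otimes a^t(h_{(2)}) = \bigl(\sum a(h_{(1)}) \otimes a(h_{(2)})\bigr)^*$, the transpose of the action of $h$ on $V \otimes V$. The condition that $a$ preserves $R$ says precisely that $R$ is an endomorphism of $V \otimes V$ as an $H$-module; transposing this identity converts $R$ into $R^*$ and the action into its contragredient, so $R^*$ is preserved by the $H^{op}$-action on $V^* \otimes V^*$. Thus $(H, a) \mapsto (H^{op}, a^t)$ is a well-defined functor $\mathit{H}_R \to \mathit{H}_{R^*}$; using $V^{**} = V$ and $(R^*)^* = R$, valid because $V$ is finite-dimensional, applying it twice returns the identity functor, so it is an isomorphism of categories.

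Finally, an isomorphism of categories carries a final object to a final object. By the proposition identifying $P_R(V)$ as the final object of $\mathit{H}_R$, and $P_{R^*}(V^*)$ as the final object of the corresponding category $\mathit{H}_{R^*}$, the functor sends $P_R(V)$ to $P_R(V)^{op}$, which must therefore be canonically isomorphic to $P_{R^*}(V^*)$. Rearranging gives $P_R(V) \cong P_{R^*}(V^*)^{op}$, and with it the corollary $P_R(V)^{op} = A(R)^\circ$. I expect the only genuine obstacle to be the bookkeeping in the braiding step: one must verify carefully that the module-morphism condition for $R$ dualizes to the module-morphism condition for $R^*$, and in particular that the opposite multiplication on $H$ is exactly what is forced by the anti-homomorphism property of the transpose — this is precisely what aligns the shared comultiplication with the contragredient action on the tensor square.
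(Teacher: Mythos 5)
Your proof is correct and is essentially the paper's own argument: both rest on the transpose/contragredient construction $a \mapsto a^t$, with the $^{op}$ arising because transposition reverses composition and with $R$-preservation dualizing to $R^*$-preservation. The only difference is packaging --- you route the universality step through finality in the category $\mathit{H}_R$ (which the paper itself flags as the intended tool for this duality), while the paper states the transpose correspondence directly at the level of measuring coalgebras $\rho' (c) = (\rho(c))^*$ and then appeals to the universal property to conclude $P_R(V) \cong P_{R^*}(V^*)$ as coalgebras, noting the reversal of multiplication afterwards.
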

\begin{proof}
This is simply the observation that nothing goes wrong when taking duals: if $\rho :C \to \End(V)$ is a linear map, define $\rho ':C \to \End(V^*)$ via $\rho ' (c) = (\rho (c))^*$.  If $\rho (c) $ preserved $R$, then $(\rho (c))^*$ will preserve $R^*$.  Thus as measuring coalgebras, $P_R(V) \cong P_{R^*}(V^*)$. Since $(\alpha \beta)^* = \alpha ^* \beta ^*$ for $\alpha , \beta$ in $\End(V)$ the order of multiplication is the reverse of the usual in  $P_{R^*}(V^*)$.
\end{proof}

\section{Proof of \ref{embed} for the classical case}
\label{classicalembedproof}
Recall that $\phi: \mathfrak{g} \to \End(A)$ is a faithful representation of $\mathfrak{g}$ as derivations on $A$.  Notice that $C = \mathbb{C}1 \oplus \mathfrak g$ can be given the structure of a coalgebra by setting $1$ to be grouplike, and elements in $\mathfrak g$ to be primitive.  Let $\hat{\phi}: C \to \End(A)$ map $1$ to the identity map on $A$ and equal $\phi$ when restricted to $\mathfrak{g}$.  Then $\hat{\phi}$ is an injective measuring map.

\begin{proof}[Proof of Theorem \ref{embed}]
1. Let $\pi: P(A, A) \to \End(A)$ be the universal measuring map.  By the universal property of measuring coalgebras there is a unique coalgebra map $\rho: C \to P(A,A)$ that satisfies $\pi \circ \rho = \hat{\phi}$.  Then, $\rho$ is injective because $\hat{\phi}$ is injective.  Let $U$ denote the sub-bialgebra of $P(A,A)$ generated by the image of $C$.

Consider the ideal $J$ in $P(A,A)$ generated by the set of elements of the form
\[\rho X \rho Y - \rho Y \rho X - \rho{[X,Y]}.\]

\begin{lemma} The ideal $J$ is in the kernel of the measuring map $\pi$.  Moreover, $J$ is also a coideal.
\end{lemma}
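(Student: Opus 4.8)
The statement asks for two things --- that $J\subseteq\ker\pi$ and that $J$ is a coideal --- and the engine behind both is that on the bialgebra $P(A,A)$ the structure maps $\pi$, $\epsilon$ and $\Delta$ are all algebra homomorphisms (Theorem \ref{univmeasproperties}). Write $P=P(A,A)$ and, for $X,Y\in\mathfrak g$, set $g_{X,Y}=\rho X\,\rho Y-\rho Y\,\rho X-\rho[X,Y]$, so that $J$ is the two-sided ideal generated by the $g_{X,Y}$. The inclusion $J\subseteq\ker\pi$ is then immediate: since $\pi$ is an algebra homomorphism and $\pi\circ\rho=\hat\phi$ restricts to $\phi$ on $\mathfrak g$, we get $\pi(g_{X,Y})=[\phi(X),\phi(Y)]-\phi([X,Y])=0$ because $\phi$ is a homomorphism of Lie algebras; as $\ker\pi$ is a two-sided ideal it contains all of $J$.

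For the coideal property I would first note that $\rho 1=1_P$. The unit $1_P$ is grouplike with $\pi(1_P)=\id_A$, and $1\in C$ is grouplike with $\hat\phi(1)=\id_A$; both $1\mapsto\rho 1$ and $1\mapsto 1_P$ are coalgebra maps $\mathbb C1\to P$ compatible with the measuring maps, so they agree by the uniqueness in the universal property (Proposition \ref{meascondition}). Hence each $X\in\mathfrak g$ has primitive image, $\Delta(\rho X)=\rho X\otimes 1_P+1_P\otimes\rho X$, and $\epsilon(\rho X)=\epsilon_C(X)=0$. Expanding $\Delta(g_{X,Y})$ with $\Delta$ an algebra homomorphism, the mixed terms $\rho X\otimes\rho Y$ and $\rho Y\otimes\rho X$ cancel in the commutator, and one is left with
\[
\Delta(g_{X,Y})=g_{X,Y}\otimes 1_P+1_P\otimes g_{X,Y},\qquad \epsilon(g_{X,Y})=0,
\]
so each generator is itself primitive and counit-free.

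It remains to pass from the generators to the whole ideal. Because $\epsilon$ is an algebra homomorphism, $\ker\epsilon$ is an ideal containing every $g_{X,Y}$, whence $\epsilon(J)=0$. For comultiplication, a general element of $J$ is a sum of terms $a\,g_{X,Y}\,b$ with $a,b\in P$; applying the algebra homomorphism $\Delta$ together with the primitivity just established gives
\[
\Delta(a\,g_{X,Y}\,b)=\sum a_{(1)}g_{X,Y}b_{(1)}\otimes a_{(2)}b_{(2)}+\sum a_{(1)}b_{(1)}\otimes a_{(2)}g_{X,Y}b_{(2)},
\]
and since $J$ is a two-sided ideal the first sum lies in $J\otimes P$ and the second in $P\otimes J$. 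Thus $\Delta(J)\subseteq J\otimes P+P\otimes J$, which with $\epsilon(J)=0$ is exactly the assertion that $J$ is a coideal.

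The one genuinely load-bearing step is the cancellation that renders each $g_{X,Y}$ primitive, and that in turn relies on the identification $\rho 1=1_P$: were $\rho 1$ merely some other grouplike element, $\rho X$ would only be skew-primitive and the cross terms would fail to cancel. Once primitivity of the generators and the fact that $\Delta,\epsilon$ are algebra maps are in hand, propagation to the ideal is routine, so I expect no further obstacle.
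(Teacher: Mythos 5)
Your proof is correct and follows essentially the same route as the paper: the generators $\rho X\,\rho Y-\rho Y\,\rho X-\rho[X,Y]$ are killed by the algebra homomorphism $\pi$ because $\phi$ is a Lie algebra map, and their span is a coideal (indeed they are primitive), from which the ideal they generate is a coideal since $\Delta$ and $\epsilon$ are algebra maps. You simply spell out the steps the paper dismisses as ``routine verification'' --- notably the identification $\rho 1=1_P$ needed for primitivity and the propagation from generators to the full ideal --- which is a worthwhile filling-in rather than a different argument.
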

\begin{proof} Notice that since $\mathfrak g$ is represented in $\End(A)$ and since multiplication in $P(A,A)$ is defined via composition in $\End(A)$, $\pi (Z) = 0$ for all $Z$ in $\{\rho X \rho Y - \rho Y \rho X - \rho[X,Y]\}$.  Similarly, since $\pi$ is an algebra homomorphism, all of the ideal $J$ generated by elements $Z$ must also lie in the kernel of $\pi$.  
That $J$ is a coideal can be verified by checking directly that the space spanned by \[\rho X \rho Y - \rho Y \rho X - \rho{[X,Y]}\] is a coideal.
\end{proof}

\begin{lemma}\label{coideal}
If $J$ is any coideal in $P(A,A)$ which lies in the kernel of $\pi$, then $J = 0$.
\end{lemma}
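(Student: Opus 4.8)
The plan is to exploit the finality of $P(A,A)$ to show that the projection by $J$ must split, which forces $J = 0$. First I would form the quotient $P(A,A)/J$. Because $J$ is a coideal, the comultiplication and counit of $P(A,A)$ descend to $P(A,A)/J$, making it a coalgebra and making the projection $q : P(A,A) \to P(A,A)/J$ a coalgebra map with $\ker q = J$.

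Next, since $J \subset \ker \pi$ by hypothesis, the measuring map $\pi$ factors through $q$, giving $\bar\pi : P(A,A)/J \to \End(A)$ with $\bar\pi \circ q = \pi$. I claim $(P(A,A)/J, \bar\pi)$ is again a measuring coalgebra. This is the one computation required, and it is routine: writing $\bar c$ for the image of $c$ and using the descended comultiplication $\bar\triangle(\bar c) = \sum \overline{c_{(1)}} \otimes \overline{c_{(2)}}$, the measuring property of $\pi$ gives
\[
\bar\pi(\bar c)(aa') = \pi(c)(aa') = \sum_{(c)} \pi(c_{(1)})(a)\,\pi(c_{(2)})(a') = \sum_{(\bar c)} \bar\pi(\overline{c_{(1)}})(a)\,\bar\pi(\overline{c_{(2)}})(a'),
\]
and the unit condition $\bar\pi(\bar c)(1_A) = \pi(c)(1_A) = \epsilon(c)1_A = \bar\epsilon(\bar c)1_A$ descends identically.

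Now I would invoke the universal property of $P(A,A)$ (Proposition \ref{meascondition}) applied to the measuring coalgebra $(P(A,A)/J, \bar\pi)$: there is a \emph{unique} coalgebra map $\rho : P(A,A)/J \to P(A,A)$ with $\pi \circ \rho = \bar\pi$. Consider the composite $\rho \circ q : P(A,A) \to P(A,A)$. It is a coalgebra endomorphism satisfying $\pi \circ (\rho \circ q) = \bar\pi \circ q = \pi$. But the identity $\id$ also satisfies $\pi \circ \id = \pi$, and the uniqueness clause of the universal property applied to the measuring coalgebra $(P(A,A), \pi)$ itself forces the endomorphism commuting with $\pi$ to be unique. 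Hence $\rho \circ q = \id$, so $q$ is injective, and therefore $J = \ker q = 0$.

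I expect no serious obstacle here; the entire content lies in recognizing that the defining property of $P(A,A)$ supplies both existence and uniqueness, and that it is exactly the uniqueness half that forces the projection $q$ to split. The only step demanding genuine care is the verification that the measuring property survives the passage to $P(A,A)/J$, and even that reduces to rewriting the measuring identity using the descended comultiplication, as displayed above.
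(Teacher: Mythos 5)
Your proof is correct and follows essentially the same route as the paper: both form the quotient coalgebra $P(A,A)/J$, observe that the measuring descends to it, and then invoke the finality (uniqueness clause) of $P(A,A)$ in the category of measuring coalgebras to force $J=0$. Your version merely spells out the splitting argument ($\rho\circ q=\id$ by uniqueness, hence $q$ injective) that the paper compresses into the phrase ``$P(A,A)/J$ has the universal property, hence $J=0$ by uniqueness.''
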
 

\begin{proof} If $J$ is a coideal in $P(A,A)$ which is contained in the kernel of $\pi$, then observe that $P(A,A)/J$ has the universal property which characterizes the universal measuring coalgebra, hence $J = 0$ by the uniqueness of $P(A, A)$.

Since $\rho$ is thus a linear map from $\mathfrak g$ to an associative algebra U which satisfies the identity 

\[\rho X \rho Y - \rho Y \rho X - \rho{[X,Y]} = 0,\]

by the universal property of universal enveloping algebras there is a unique algebra homomorphism 

\[\mu : U\mathfrak g \to U\]

which agrees with $\rho$ when restricted to $\mathfrak g$. But the map $\mu$ is not just an algebra homomorphism; it is a bialgebra map because $\rho$ is a coalgebra map.  

We now appeal to the following basic, if initially surprising, fact about coalgebra maps: they enjoy a rigidity that algebra homomorphisms lack.

\begin{proposition}\label{rigid}
Let $B$ be a coalgebra.  If $C$ is another coalgebra and $\nu: B \to C$ is a map of coalgebras, then $\nu$ is injective if and only if it is injective on the first coradical filtration of B. 
\end{proposition}

\begin{proof}\cite{montgomery} page 65. \end{proof}

It is thus sufficient to show that $\mu$ is injective on the first coradical filtration of $U\mathfrak{g}$, which is just $C$ \cite{sweedler}.  But $\mu = \rho$ on $C$, and is thus injective.

\end{proof}

2.  The structure of this part is parallel to that of the first part.  The same coalgebra $C$ can be used, and the measuring map from $C$ to $k$ is simply $r = e \circ \phi$.  This map measures, providing a map
\[
\hat{\rho}: C \to P(A,k).
\]   

As before, let $\hat{U}$ be the subalgebra of $P(A,k)$ generatd by the image of $C$, and take $\hat{J}$ to be the ideal generated by 

\[\hat{\rho} X\hat{\rho} Y - \hat{\rho} Y \hat{\rho} X - \hat{\rho}{[X,Y]}.\]

The proof continues as above, relying on the following two lemmas as before.

\begin{lemma}
The ideal $\hat{J}$ is in the kernel of the measuring map $\pi$.  Moreover, $\hat{J}$ is also a coideal.
\end{lemma}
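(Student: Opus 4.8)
The statement to prove is that the ideal $\hat{J}$ generated by the elements $\hat{\rho} X\hat{\rho} Y - \hat{\rho} Y \hat{\rho} X - \hat{\rho}[X,Y]$ lies in the kernel of the measuring map $\pi: P(A,k) \to \Hom(A,k)$ and is also a coideal. The plan is to mirror exactly the argument already given for the first part of Theorem \ref{embed}, since the construction here is structurally identical, only with the target algebra $A$ replaced by $k$ and the composition product replaced by convolution.

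First I would establish that $\hat{J}$ is in the kernel of $\pi$. The key point is that $\hat{\rho}$ was obtained from the measuring map $r = e \circ \phi: \mathfrak{g} \to \Hom(A,k)$, and by part 4 of Theorem \ref{univmeasproperties} the measuring map $\pi: P(A,k) \to \Hom(A,k)$ is an algebra homomorphism when $A$ is a bialgebra, where the product on $\Hom(A,k)$ is convolution. Since $e$ is an algebra homomorphism and $\phi$ is a Lie algebra homomorphism into $\End(A)$ acting as derivations, a direct check shows that for each generator $Z = \hat{\rho} X\hat{\rho} Y - \hat{\rho} Y \hat{\rho} X - \hat{\rho}[X,Y]$ we have $\pi(Z) = r(X)r(Y) - r(Y)r(X) - r([X,Y]) = 0$ in the convolution algebra, because $r$ respects the bracket. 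As $\pi$ is an algebra homomorphism, the entire ideal $\hat{J}$ generated by these $Z$ must also lie in $\ker \pi$.

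Second I would verify that $\hat{J}$ is a coideal, which again reduces to checking that the subspace spanned by the generators $\hat{\rho} X\hat{\rho} Y - \hat{\rho} Y \hat{\rho} X - \hat{\rho}[X,Y]$ is a coideal, i.e. that its image under comultiplication lies in $\hat{J} \otimes P(A,k) + P(A,k) \otimes \hat{J}$ and that it is annihilated by the counit. Since $\hat{\rho}$ is a coalgebra map and the elements of $\mathfrak{g}$ are primitive in $C$ while $1$ is grouplike, the comultiplication of each generator can be expanded using $\Delta(\hat{\rho} X) = \hat{\rho} X \otimes 1 + 1 \otimes \hat{\rho} X$; the resulting expression collapses, by the Leibniz-type cancellation familiar from the classical enveloping algebra calculation, into terms each lying in the required sum. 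This is precisely the same verification invoked in the first part, so I would simply record that the calculation is identical.

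The main obstacle, such as it is, is bookkeeping rather than conceptual: one must be careful that the product in $P(A,k)$ is now governed by convolution rather than composition, so that the identity $\pi(Z)=0$ is checked in the convolution algebra structure on $\Hom(A,k)$ guaranteed by the bialgebra hypothesis on $A$. Once this is noted, both claims follow verbatim from the corresponding lemma in the proof of part 1, and the subsequent appeal to Lemma \ref{coideal} and Proposition \ref{rigid} proceeds unchanged to conclude that $U\mathfrak{g}$ embeds as a bialgebra in $P(A,k)$.
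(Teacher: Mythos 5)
The coideal half of your argument is sound and matches what the paper does for part~1: since $\hat{\rho}$ is a coalgebra map and $P(A,k)$ is a bialgebra (part 4 of Theorem~\ref{univmeasproperties}), each generator $Z = \hat{\rho}X\hat{\rho}Y - \hat{\rho}Y\hat{\rho}X - \hat{\rho}[X,Y]$ is primitive (the cross terms cancel in the commutator), and the ideal generated by primitive elements in a bialgebra is a coideal. The genuine gap is in the kernel half. You claim that ``a direct check shows'' $\pi(Z) = r(X)*r(Y) - r(Y)*r(X) - r([X,Y]) = 0$ in the convolution algebra ``because $r$ respects the bracket,'' deducing this from the facts that $e$ is an algebra homomorphism and $\phi$ is a Lie algebra homomorphism into $\End(A)$. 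No such check exists. The hypothesis on $\phi$ says it respects the bracket for the \emph{composition} product on $\End(A)$; the identity you need lives in the \emph{convolution} algebra $\Hom(A,k)$, whose product is built from the comultiplication of $A$ --- and nothing in the stated hypotheses ties the action $\phi$ or the point $e$ to $\Delta_A$ at all. In part~1 the generators died under $\pi$ precisely because multiplication in $P(A,A)$ is modelled on composition, where $\phi$ is a Lie map; that mechanism has no counterpart here, so this is a conceptual obstruction, not the bookkeeping issue you describe.

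Concretely, the claimed identity can fail under exactly the stated hypotheses. Take $A = k[x,y]$ with $x,y$ primitive, let $\mathfrak{g}$ be the two-dimensional Lie algebra with $[X,Y] = -Y$, and set $\phi(X) = x\partial_x + y\partial_y$, $\phi(Y) = \partial_x$; this is a faithful action by derivations with $[\phi(X),\phi(Y)] = -\phi(Y)$. Let $e$ be evaluation at $(0,1)$. Then $r(X) = \partial_y|_{(0,1)}$ and $r(Y) = \partial_x|_{(0,1)}$ are linearly independent, so $r$ is injective on $\mathfrak{g}$. But $A$ is cocommutative, so convolution on $\Hom(A,k)$ is commutative and every convolution commutator vanishes, while $r([X,Y]) = -r(Y) \neq 0$; hence $\pi(Z) = r(Y) \neq 0$ and $Z$ is not in the kernel. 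What is actually needed --- and what holds in the motivating example $A = k[G]$ with $e$ evaluation at the identity --- is a compatibility between the action and the coalgebra structure, for instance $e = \epsilon_A$ together with left invariance $\Delta_A \circ \phi(W) = (\id \otimes \phi(W)) \circ \Delta_A$ for $W \in \mathfrak{g}$; one then verifies $r(W)*r(W') = e \circ (\phi(W)\phi(W'))$, after which the part-1 argument transfers verbatim. To be fair, the paper itself offers no proof of this lemma beyond ``as before,'' and its stated hypotheses suffer the same defect; but your proposal, by asserting the transfer as a routine check, papers over the one step that genuinely requires an additional hypothesis.
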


\begin{lemma}
 If $\hat{J}$ is any coideal in $P(A,k)$ which lies in the kernel of $\pi$, then $\hat{J} = 0$
\end{lemma}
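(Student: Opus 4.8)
The plan is to run the argument of Lemma~\ref{coideal} essentially verbatim, with the pair $(A,A)$ replaced by $(A,k)$ and $\End(A)$ replaced by $\Hom(A,k)$. First I would form the quotient coalgebra $P(A,k)/\hat{J}$; this is a genuine coalgebra precisely because $\hat{J}$ is a coideal, so that the comultiplication and counit of $P(A,k)$ descend along the canonical projection $q\colon P(A,k)\to P(A,k)/\hat{J}$, which is therefore a map of coalgebras. Since $\hat{J}\subseteq\ker\pi$, the measuring map $\pi\colon P(A,k)\to\Hom(A,k)$ factors as $\pi=\bar\pi\circ q$ for a unique linear map $\bar\pi\colon P(A,k)/\hat{J}\to\Hom(A,k)$.

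The one step carrying any content is the verification that $(P(A,k)/\hat{J},\bar\pi)$ is again a measuring coalgebra. Writing $\bar c$ for $q(c)$ and using that $\Delta\bar c=\sum_{(c)}\bar c_{(1)}\otimes\bar c_{(2)}$ together with $\epsilon(\bar c)=\epsilon(c)$ are exactly the structure maps induced by $q$, both defining identities for $\bar\pi$ follow from the corresponding identities for $\pi$: for $a,a'\in A$ one has $\bar\pi(\bar c)(aa')=\pi(c)(aa')=\sum_{(c)}\pi(c_{(1)})(a)\pi(c_{(2)})(a')=\sum_{(c)}\bar\pi(\bar c_{(1)})(a)\bar\pi(\bar c_{(2)})(a')$, and likewise $\bar\pi(\bar c)(1_A)=\epsilon(\bar c)1_k$. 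This is the only place where the two hypotheses---that $\hat{J}$ is a coideal and that it lies in $\ker\pi$---are both used, and it is precisely the mild obstacle glossed in the proof of Lemma~\ref{coideal} as the assertion that the quotient ``has the universal property which characterizes the universal measuring coalgebra.''

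With $(P(A,k)/\hat{J},\bar\pi)$ established as a measuring coalgebra, the universal property of $P(A,k)$ in Proposition~\ref{meascondition} supplies a unique coalgebra map $\rho\colon P(A,k)/\hat{J}\to P(A,k)$ with $\pi\circ\rho=\bar\pi$. The composite $\rho\circ q\colon P(A,k)\to P(A,k)$ then satisfies $\pi\circ(\rho\circ q)=\bar\pi\circ q=\pi$, so by the uniqueness clause of the same universal property---applied with the identity map as the competing morphism---we conclude $\rho\circ q=\id_{P(A,k)}$. Hence $q$ has a left inverse, is therefore injective, and $\hat{J}=\ker q=0$, as required. I expect no genuine difficulty beyond the measuring-map check of the middle paragraph; everything else is a formal consequence of finality of $P(A,k)$.
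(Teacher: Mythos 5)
Your proposal is correct and follows essentially the same route as the paper: the paper's proof of the parallel Lemma \ref{coideal} is precisely the observation that the quotient by a coideal contained in $\ker\pi$ is still a measuring coalgebra, so that finality/uniqueness of the universal measuring coalgebra forces the coideal to vanish. Your version merely fills in the details the paper glosses over (the measuring check on $P(A,k)/\hat{J}$ and the split-monomorphism argument $\rho\circ q=\id$ via the uniqueness clause of Proposition \ref{meascondition}), which is a faithful expansion rather than a different approach.
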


3.  Take $\pi: P(A, A) \to \End(A)$ and $\tilde{\pi}: P(A, k) \to \Hom(A, k)$ to be the respective universal measurings.  And let $\mu: U\mathfrak{g} \to P(A, A)$ be the injection from part 1 and let $\tilde{\mu}: U\mathfrak{g} \to P(A, k)$ be the injection from part 2.  Then $P(1, e) \circ \mu$ and $\tilde{\mu}$ provide two bialgebra maps from $U\mathfrak{g}$ to $P(A, k)$.  We must show that they are the same map.  By the universal property of $U\mathfrak{g}$, they are the same if they are the same on $\mathfrak{g}$.  By the universal property of $P(A, k)$, they are the same on $\mathfrak{g}$ if they give the same action when composed with $\tilde{\pi}$.  As maps from $\mathfrak{g}$ to $\Hom(A, k)$, these two maps satisfy:
\begin{align*}
&\tilde{\pi} \circ (P(1, e) \circ \mu) = (\tilde{\pi} \circ P(1, e)) \circ \mu = e \circ \pi \circ \mu = e \circ \phi,\\
&\tilde{\pi} \circ \tilde{\mu} = r = e \circ \phi,
\end{align*}
and the two maps are thus the same.\qed
\end{proof}

Particular choices of $A$ recover the universal enveloping algebra as the sub-bialgebra of $P(A,A)$ generated by $\mathfrak g$.  The following results follow easily from Theorem \ref{embed}.
\begin{corollary}\label{evaluation}
\begin{enumerate}
	\item $U \mathfrak g$ includes in $P(C^\infty (G), C^\infty (G))$.
	\item If $V$ is a faithful representation of $\mathfrak g$, then $U \mathfrak g$ includes in $P(TV, TV)$ where $TV$ is the tensor algebra on $V$
	\item If $V$ is a faithful representation of $\mathfrak g$, then $U \mathfrak g$ includes in $P(\Sym V,\Sym V)$ where $\Sym V$ is the symmetric algebra on $V$.
	\item If $V$ is a faithful representation of $\mathfrak g$, then $U \mathfrak g$ includes in $P(\bigwedge V, \bigwedge V)$ where  $\bigwedge V$ is the exterior algebra on $V$.
	\item Let $e$ denote the homomorphism $e: C^\infty (G) \to \mathbb{R}$ which evaluates a function at the identity $e$ of $G$.  Then

	\[P(1,e): P(C^\infty (G), C^\infty (G)) \to P(C^\infty (G), \mathbb{R})\]

	identifies the copy of $U \mathfrak g $ in $P(C^\infty (G) C^\infty (G))$ with the pointed subbialgebra of $P(C^\infty (G) C^\infty (G))$ with the identity as the unique group-like element.
\end{enumerate}

\end{corollary}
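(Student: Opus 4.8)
The plan is to read parts 1--4 as four instances of Theorem~\ref{embed}(1), so that in each case the only thing to supply is a faithful action of $\mathfrak{g}$ by derivations. For part 1 I would take $\phi$ to be the standard realization of $\mathfrak{g}=T_eG$ by left--invariant vector fields, $(\phi(X)f)(g)=\left.\frac{d}{dt}\right|_{t=0} f(g\exp tX)$, each of which is a derivation of the pointwise product on $C^\infty(G)$. That $X\mapsto\phi(X)$ is a homomorphism of Lie algebras is the classical identity $[\widetilde X,\widetilde Y]=\widetilde{[X,Y]}$ for left--invariant fields, and it is faithful because $(\phi(X)f)(e)$ is the directional derivative of $f$ at $e$ along $X$, which is nonzero for suitable $f$ whenever $X\neq0$. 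Theorem~\ref{embed}(1) then yields the inclusion $U\mathfrak{g}\hookrightarrow P(C^\infty(G),C^\infty(G))$.

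For parts 2--4 I would begin from a faithful representation $\phi\colon\mathfrak{g}\to\End(V)$ and form the coalgebra $C=\mathbb{C}1\oplus\mathfrak{g}$ with $1$ grouplike and $\mathfrak{g}$ primitive. Viewing $\phi$ as a map $C\to\Hom(V,TV)$, Proposition~\ref{tensorV}(1) extends it uniquely to a measuring map $\hat\sigma\colon C\to\End(TV)$; on $\mathfrak{g}$ this is exactly the Leibniz extension of $\phi$ to a derivation of $TV$, which settles part 2. To obtain parts 3 and 4 I would check that $\hat\sigma(C)$ preserves the defining ideals: the derivation sends $v\otimes w\mp w\otimes v$ to $(\phi(X)v\otimes w\mp w\otimes\phi(X)v)+(v\otimes\phi(X)w\mp\phi(X)w\otimes v)$, again in the ideal, so Proposition~\ref{quotient}(1) descends the measuring map to $\End(\Sym V)$ and $\End(\bigwedge V)$. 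In all three cases faithfulness is immediate, since the extended derivation restricts to $\phi$ on the degree--one summand $V$, and Theorem~\ref{embed}(1) gives the embeddings.

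Part 5 is the only part with genuine content, and I would deduce it from Theorem~\ref{embed}(3) applied to $A=C^\infty(G)$ with $e$ the evaluation at the identity. Its hypothesis requires $r=e\circ\phi$ injective on $\mathfrak{g}$; but $r(X)\colon f\mapsto(\phi(X)f)(e)$ is precisely the directional derivative at $e$, already seen to be injective. Theorem~\ref{embed}(3) then carries the copy of $U\mathfrak{g}$ isomorphically onto its image in $P(C^\infty(G),\mathbb{R})$. To identify that image, I would invoke Theorem~\ref{univmeasproperties}(5) to write $P(C^\infty(G),\mathbb{R})=C^\infty(G)^\circ$, whose group-like elements are the algebra characters, i.e.\ the point evaluations on $G$. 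Since $U\mathfrak{g}$ is a connected coalgebra with coradical $\mathbb{C}1$ and hence a unique group-like element $1$, its isomorphic image is a \emph{pointed} sub-bialgebra whose unique group-like is the image of $1$, namely the evaluation at the identity of $G$.

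The main obstacle lives entirely in part 5 and is one of bookkeeping about group-like elements: one must know that the group-likes of $C^\infty(G)^\circ$ are exactly the point evaluations and that $1\in U\mathfrak{g}$ is carried to the evaluation at $e$, together with the pointedness of $U\mathfrak{g}$ (its coradical is $\mathbb{C}1$, a fact already used through Proposition~\ref{rigid} in the proof of Theorem~\ref{embed}). A secondary technical caveat, which I would flag rather than belabor, is that regarding $C^\infty(G)$ as a genuine bialgebra requires interpreting its coproduct in a completed tensor product; I would adopt the convention implicit in the paper (or replace $C^\infty(G)$ by representative functions when $G$ is algebraic) so that Theorem~\ref{embed}(2)--(3) apply verbatim.
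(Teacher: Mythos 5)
Your proposal is correct and takes essentially the same route as the paper, which offers no separate argument beyond the remark that the corollary ``follows easily from Theorem \ref{embed}'': your instantiations (left-invariant vector fields for $C^\infty(G)$, the derivation extension of a faithful representation for $TV$, $\Sym V$, $\bigwedge V$ with the ideal-preservation check, and Theorem \ref{embed}(3) plus the identification $P(C^\infty(G),\mathbb{R})=C^\infty(G)^\circ$ for part 5) are precisely the details the paper leaves implicit. Your caveat about needing a completed tensor product to view $C^\infty(G)$ as a bialgebra points to a gap in the paper's own formulation rather than in your argument, and your suggested fix (representative functions, as in the paper's earlier remark on $k[G]$) is the standard one.
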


\begin{remark}
\begin{enumerate}
	\item Even if $S$ is just a linear space of derivations of $A$, the construction still generates the universal enveloping algebra of the Lie algebra generated by $S$.  In particular, if $\mathfrak{g}$ is semisimple, $U\mathfrak{g}$ will be generated in $P(A, A)$ as long as $\phi: \mathfrak{g} \to \End(A)$ is injective on the generators of $\mathfrak{g}$.  This is the classical parallel of the quantized situation discussed below.
	\item The third example demonstrates the fact that the universal enveloping algebra can be infinite dimensional even when the algebra A is finite dimensional.
\end{enumerate}
\end{remark}

In this way, the universal enveloping algebra is realized not as some external abstract construction appended to a Lie algebra, but is a very natural bialgebra of transformations (in the enriched setting) of a linear space. Moreover, this interpretation of the role of the universal enveloping algebra applies without further adjustment to the case of quantized enveloping algebras.  It is only necessary to introduce appropriate analogues of the symmetric and exterior algebras, and the function ring of a Lie group $G$.

\section{Quantized Enveloping Algebras}
\label{quantizedalgebras}
\subsection{Definitions and Basic Facts}

Let $\mathfrak{g}$ be a finite-dimensional complex semisimple Lie algebra of rank $n$ with Cartan matrix $(a_{ij})$ and let $d_i$ be the coprime positive integers such that the matrix $(d_i a_{ij})$ is symmetric.  Let $q$ be a fixed complex number, not a root-of-unity, and set $q_i \mathrel{\mathop:}= q^{d_i}$.  The algebra $U_q\mathfrak{g}$ is the complex associative algebra with $4n$ generators $E_i, F_i, K_i, K_i^{-1}, 1 \leq i \leq n$ and relations:
\begin{gather*}
\begin{aligned}
K_i K_j &= K_j K_i,  & K_i K_i^{-1} &= K_i^{-1} K_i = 1, \\
K_i E_j K_i^{-1} &= q_i^{a_{ij}} E_j, & K_i F_j &K_i^{-1} = q_i^{-a_{ij}} F_j,\\
\end{aligned}\\
\begin{aligned}
E_i F_j - F_j E_i &= \delta_{ij} \frac{K_i - K_i^{-1}}{q_i - q_i^{-1}}\\
\sum_{r = 0}^{1 - a_{ij}} (-1)^r \Biggl[ \Biggl[ \begin{matrix} 1 - a_{ij} \\ r \end{matrix} \Biggr] \Biggr]_{q_i}&E_i^{1 - a_{ij} - r} E_j E_i^r = 0, i \not= j\\
\sum_{r = 0}^{1 - a_{ij}} (-1)^r \Biggl[ \Biggl[ \begin{matrix} 1 - a_{ij} \\ r \end{matrix} \Biggr] \Biggr]_{q_i}&F_i^{1 - a_{ij} - r} F_j F_i^r = 0, i \not= j
\end{aligned}
\end{gather*}

where:
\begin{align*}
\Biggl[ \Biggl[ \begin{matrix} 1 - a_{ij} \\ r \end{matrix} \Biggr] \Biggr]_{q} &= \frac{[n]_q!}{[r]_q! [n - r]_q!}, && [n]_q = \frac{q^n - q^{-n}}{q - q^{-1}}.
\end{align*}

There is a Hopf algebra structure on $U_q\mathfrak{g}$ given by:
\begin{gather*}
\begin{aligned}
\Delta(K_i) &= K_i \otimes K_i, &\Delta(K_i^{-1}) &= K_i^{-1} \otimes K_i^{-1},
\end{aligned}\\
\begin{aligned}
\Delta(E_i) &= E_i \otimes K_i + 1 \otimes E_i, &\Delta(F_i) &= F_i \otimes 1 + K_i^{-1} \otimes F_i,
\end{aligned}\\
\begin{aligned}
\epsilon(K_i) &= 1, &\epsilon(E_i) &= \epsilon(F_i) = 0,
\end{aligned}\\
\begin{aligned}
S(K_i) &= K_i^{-1}, &S(E_i) &= -E_i K_i^{-1}, &S(F_i) &= -K_i F_i.
\end{aligned}
\end{gather*}

A quantum version of the PBW theorem \cite{pressley} shows that the generating coalgebra $C = \mathbb{C}[E_i, F_i, K_i, K_i^{-1}]_{1 \leq i \leq n}$ is a subcoalgebra of $U_q\mathfrak{g}$.

Chin and Musson \cite{musson} \cite{mussoncorr} and M\"{u}ller \cite{muller} have shown that the coalgebra structure of $U_q\mathfrak{g}$ is particularly simple:
\begin{proposition}
$U_q\mathfrak{g}$ is pointed with coradical $U_0 = \mathbb{C}[K_i, K_i^{-1}]_{1 \leq i \leq n}$ and first term of the coradical filtration $\sum_i U_0 + U_0 E_i + U_0 F_i$.
\end{proposition}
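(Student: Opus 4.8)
The plan is to exploit the root-lattice grading on $U_q\mathfrak{g}$ together with the triangular (PBW) decomposition of \cite{pressley}, and then to reduce both assertions to standard structure theory for pointed coalgebras. Grade $U_q\mathfrak{g}$ by the root lattice $Q = \bigoplus_i \mathbb{Z}\alpha_i$, assigning $\deg E_i = \alpha_i$, $\deg F_i = -\alpha_i$ and $\deg K_i^{\pm 1} = 0$. The defining relations are homogeneous, so this is a grading by $Q$ for which the comultiplication and counit are graded maps; in particular the degree-zero part is exactly $U_0 = \mathbb{C}[K_i,K_i^{-1}]$. Since $q$ is not a root of unity the monomials $K^\lambda = \prod_i K_i^{\lambda_i}$, $\lambda \in \mathbb{Z}^n$, are linearly independent group-like elements, so $U_0$ is the group algebra of the free abelian group $\mathbb{Z}^n$; as such it is a pointed cocommutative coalgebra whose coradical is all of $U_0$ and whose group-likes are precisely the $K^\lambda$.

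Next I would identify the group-likes of the whole algebra and establish pointedness. A group-like element is a unit (its inverse is its antipode), and by PBW $U_q\mathfrak{g} \cong U^- \otimes U_0 \otimes U^+$ is a domain graded by $Q$; refining along the height function $Q \to \mathbb{Z}$ makes it a $\mathbb{Z}$-graded domain, in which every unit is homogeneous. The only homogeneous units lie in the degree-zero part $U_0$, whose units are $\mathbb{C}^\times K^\lambda$, so imposing $\epsilon(g) = 1$ forces $g = K^\lambda$; hence $G(U_q\mathfrak{g}) = \{K^\lambda : \lambda \in \mathbb{Z}^n\}$. For pointedness, observe that the subspace $C$ spanned by $\{K^\lambda, K^\lambda E_i, K^\lambda F_i\}$ is a subcoalgebra: the coproduct formulas give $\Delta(K^\lambda E_i) = K^\lambda E_i \otimes K^\lambda K_i + K^\lambda \otimes K^\lambda E_i$ and similarly for $F_i$, so every element of $C$ is either group-like or skew-primitive between two group-likes. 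A coalgebra spanned by group-likes together with skew-primitives over them is pointed with coradical the span of its group-likes, so $C$ is pointed with coradical $U_0$. By the PBW statement recalled in the text, $C$ generates $U_q\mathfrak{g}$ as an algebra, and a Hopf algebra generated as an algebra by a pointed subcoalgebra is itself pointed (\cite{montgomery}); combined with the determination of $G(U_q\mathfrak{g})$ this shows $U_q\mathfrak{g}$ is pointed with coradical $\mathbb{C}[K_i,K_i^{-1}] = U_0$.

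For the first term of the coradical filtration I would invoke the Taft--Wilson theorem (\cite{montgomery}, \cite{sweedler}): for any pointed coalgebra $U_1 = U_0 \oplus \bigoplus_{g,h \in G} P'_{g,h}$, where $P'_{g,h}$ is a complement in the space of $(g,h)$-skew-primitives to the line spanned by $g - h$, so it suffices to compute all skew-primitive elements modulo $U_0$. Using the $Q$-grading, if $x$ is homogeneous of degree $\beta$ and $(g,h)$-skew-primitive with $g,h$ group-like (hence of degree $0$), then $\Delta x$ is supported only in bidegrees $(0,\beta)$ and $(\beta,0)$. Expanding $\Delta x$ through the PBW basis shows that this edge-concentration of the coproduct forces $\beta = \pm \alpha_i$ and $x \in U_0 E_i$ respectively $x \in U_0 F_i$, while every other PBW monomial of nonzero degree has a coproduct term in some bidegree $(\mu,\nu)$ with $\mu,\nu \neq 0$ and so cannot be skew-primitive. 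Assembling these contributions yields $U_1 = \sum_i U_0 + U_0 E_i + U_0 F_i$.

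The main obstacle is this last step: ruling out unexpected skew-primitives. The degree-$\alpha_i$ homogeneous component of $U_q\mathfrak{g}$ is strictly larger than $U_0 E_i$ — it contains products such as $F_j K^\lambda E_i E_j$ — so one must genuinely verify that no linear combination of such higher monomials acquires an edge-concentrated coproduct. This is precisely the computation carried out by Chin--Musson and M\"uller, and it is here that $q$ not being a root of unity is indispensable: it keeps the quantum integers $[n]_{q_i}$ nonzero, keeps the $K^\lambda$ distinct, and prevents the degeneration of the coproduct that would otherwise manufacture extra group-likes and skew-primitives.
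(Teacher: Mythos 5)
The first thing to note is that the paper does not prove this proposition at all: it is imported directly from Chin--Musson \cite{musson}, \cite{mussoncorr} and M\"uller \cite{muller}, so your write-up can only be judged as a proof in its own right --- and as such it has both a concrete error and a genuine gap. The error: under the root-lattice grading with $\deg E_i = \alpha_i$, $\deg F_i = -\alpha_i$, $\deg K_i^{\pm 1} = 0$, the degree-zero component of $U_q\mathfrak{g}$ is \emph{not} $U_0 = \mathbb{C}[K_i, K_i^{-1}]$; it contains $F_i E_i$ and every PBW monomial whose $E$- and $F$-weights cancel, and refining by the height function only enlarges the degree-zero part further. Consequently your argument that ``the only homogeneous units lie in the degree-zero part $U_0$'' collapses, and with it your determination of $G(U_q\mathfrak{g})$ --- computing the unit group of $U_q\mathfrak{g}$ is itself a nontrivial theorem, not a one-line consequence of gradedness. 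This piece is fortunately unnecessary: the lemma you cite from \cite{montgomery} gives more than pointedness, namely that the coradical of a bialgebra generated as an algebra by a subcoalgebra $D$ lies in the subalgebra generated by the coradical of $D$, hence here in $U_0$; since $U_0$ is spanned by group-likes the coradical equals $U_0$, and linear independence of group-like elements then pins down $G(U_q\mathfrak{g}) = \{K^\lambda\}$ with no unit argument at all.

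The decisive gap is the second half of the proposition, which is its real content. Your reductions via Taft--Wilson and via homogeneity of skew-primitives are both correct, but the key assertion --- that edge-concentration of $\Delta x$ in bidegrees $(0,\beta)$ and $(\beta,0)$ forces $\beta = \pm\alpha_i$ and $x \in U_0 E_i$ or $U_0 F_i$ --- is exactly the hard step: one must rule out cancellations among coproduct terms of linear combinations of higher PBW monomials, and you state this, do not do it, and then explicitly hand it to Chin--Musson and M\"uller. That makes your text a reduction of the proposition to the cited literature rather than a proof of it. To be fair, this is precisely the paper's own treatment (a citation and nothing more), so you have not done less than the paper does; but judged as a blind proof the argument is incomplete exactly where the proposition stops being routine.
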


These authors use this result together with Proposition $\ref{rigid}$ to show:
\begin{proposition} \label{bi-ideal}
Every bi-ideal of $U_q\mathfrak{g}$ contains $E_i$ and $F_i$ for some $i$.
\end{proposition}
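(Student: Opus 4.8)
The plan is to localise the problem to the first term $U_1=\sum_i U_0+U_0E_i+U_0F_i$ of the coradical filtration by invoking the rigidity of coalgebra maps, and then to read off a generator from the skew-primitive layer of $U_1$ using the weight grading together with the coideal condition.

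Let $I$ be a nonzero bi-ideal, and let $\nu\colon U_q\mathfrak g\to B=U_q\mathfrak g/I$ be the quotient map, a surjection of bialgebras (in particular a coalgebra map) with $\ker\nu=I$. Since $I\neq 0$, $\nu$ is not injective, so by Proposition \ref{rigid} it is not injective on $U_1$; hence $I\cap U_1\neq 0$. This is the crucial reduction: it produces a nonzero element of $I$ inside the completely explicit space $U_1$, where every element is a $U_0$-combination of the grouplikes $K^\alpha$ and of the skew-primitives $K^\alpha E_i$ and $K^\alpha F_i$.

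Next I would exploit that $I$, being a two-sided ideal, is stable under the commuting algebra automorphisms $x\mapsto K_ixK_i^{-1}$. On a weight vector these act by scalars such as $q_i^{a_{ij}}$, and because $q$ is not a root of unity distinct weights are separated by these eigenvalues; therefore $I=\bigoplus_\beta(I\cap U_\beta)$ is weight-graded. Consequently $I\cap U_1$ contains a nonzero homogeneous element $x$ whose weight is one of the only weights occurring in $U_1$, namely $0$, $\alpha_i$, or $-\alpha_i$, so that $x\in U_0$, $x\in U_0E_i$, or $x\in U_0F_i$ for a single index $i$. To extract an actual generator I would apply $(\mathrm{id}\otimes\nu)\circ\Delta$ to $x$; since $I$ is a coideal and $\nu(I)=0$, the result lies in $I\otimes B$. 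Writing $x=\sum_\alpha c_\alpha K^\alpha E_i$ in the weight-$\alpha_i$ case and comparing the bidegree-$(0,\alpha_i)$ component, the linear independence of the grouplikes $K^\alpha$ would force $c_\alpha\,\nu(K^\alpha E_i)=0$ for every $\alpha$; choosing $\alpha$ with $c_\alpha\neq0$ gives $K^\alpha E_i\in I$ and then $E_i\in I$ after multiplying by $K^{-\alpha}$. The case $x\in U_0F_i$ yields $F_i\in I$ symmetrically, while the weight-$0$ case yields, again from the independence of grouplikes in $B$, a collision $K^\gamma\equiv 1$, i.e.\ $K^\gamma-1\in I$. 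Finally the defining relation $E_iF_i-F_iE_i=(K_i-K_i^{-1})/(q_i-q_i^{-1})$ propagates containment of one generator to its partner and to the grouplike part: once $E_i\in I$ one obtains $K_i-K_i^{-1}\in I$, and symmetrically for $F_i$.

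The main obstacle is precisely this extraction step. The coideal condition cleanly forces $\nu(K^\alpha E_i)=0$ only when $I\cap U_0=0$, yet the very conclusion that a generator lies in $I$ feeds back, through the commutator relation, to place $K_i-K_i^{-1}$ inside $I\cap U_0$. The two phenomena—grouplike differences living in $U_0$ and genuine skew-primitives sitting above $U_0$—are therefore intertwined, and the argument must be organised so that the analysis of $I\cap U_0$ and of the skew-primitive layer is carried out simultaneously rather than in conflict. It is exactly here that the explicit Chin--Musson--M\"uller description of $U_1$ and the separation of weights furnished by $q$ not being a root of unity do the essential work.
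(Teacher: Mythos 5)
Your outline starts exactly where the actual proof must start (for the record, the paper does not prove Proposition \ref{bi-ideal} at all --- it cites Chin--Musson and M\"uller, whose argument does run through the coradical filtration and Proposition \ref{rigid}), and your first two steps are sound: rigidity gives $I\cap U_1\neq 0$ for a nonzero bi-ideal $I$, and stability of $I$ under conjugation by the $K_j$, together with $q$ not being a root of unity, produces a nonzero element of $I$ lying in $U_0$, in some $U_0E_i$, or in some $U_0F_i$. But the proof is not complete, and your last paragraph concedes as much: the extraction of $E_i\in I$ from $x\in I\cap U_0E_i$ is only valid under the extra hypothesis $I\cap U_0=0$, and the weight-zero case is abandoned at ``$K^\gamma-1\in I$ for some $\gamma\neq 0$'' with no route from there to the conclusion. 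Moreover the commutator relation cannot ``propagate containment of one generator to its partner'': from $E_i\in I$ it yields $K_i-K_i^{-1}\in I$, not $F_i\in I$, so even in the favourable case you have not produced the two generators the proposition demands.

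The one missing idea that closes every gap at once is the adjoint action of grouplikes combined with nondegeneracy of the Cartan form. Suppose $K^\gamma-1\in I$ with $\gamma\neq 0$ and write $K^\gamma=\prod_k K_k^{\gamma_k}$. Then $K^\gamma E_jK^{-\gamma}=q^{m_j}E_j$ with $m_j=\sum_k \gamma_k d_k a_{kj}$, so working modulo $I$, where $K^{\pm\gamma}\equiv 1$, one gets $(q^{m_j}-1)E_j\in I$. Since the symmetrized Cartan matrix $(d_k a_{kj})$ is invertible and $\gamma\neq 0$, some $m_j\neq 0$; since $q$ is not a root of unity, $q^{m_j}\neq 1$, hence $E_j\in I$, and the same index $j$ gives $F_j\in I$ (its exponent is $-m_j$). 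With this lemma your own observations assemble into a complete argument: if $I\cap U_0=0$, your extraction gives some $E_i\in I$ (or $F_i\in I$), whence $K_i-K_i^{-1}$ is a nonzero element of $I\cap U_0$, a contradiction; therefore $I\cap U_0\neq 0$ always holds, your grouplike-collision argument then yields $K^\gamma-1\in I$ with $\gamma\neq 0$, and the conjugation lemma finishes, giving both $E_j$ and $F_j$ in $I$ for a single $j$. As written, however, your proposal stops precisely at what you call ``the main obstacle,'' so it is a correct skeleton with a genuine hole rather than a proof.
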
 
\begin{remark} These results are only explicitly stated in \cite{musson} for simple $\mathfrak{g}$, although it follows from the work of \cite{muller} that they extend to semisimple $\mathfrak{g}$.  Also note that the result that we cite is slightly weaker than the result given in \cite{musson} (as extended by \cite{muller}).  It is all that we will need.\end{remark}

This provides a stronger version of Proposition $\ref{rigid}$:
\label{rigidquantized}
\begin{corollary}
Let $B$ be a bialgebra.  If $\nu: U_q\mathfrak{g} \to B$ is a map of bialgebras, then $\nu$ is injective if and only if it is injective on the generating coalgebra $C = \mathbb{C}[E_i, F_i, K_i, K_i^{-1}]_{1 \leq i \leq n}$.
\end{corollary}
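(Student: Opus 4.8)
The plan is to deduce the statement almost directly from Proposition \ref{bi-ideal}, with Proposition \ref{rigid} lurking in the background only through its role in establishing that proposition. The forward implication needs no work: since $C$ is a linear subspace (in fact a subcoalgebra) of $U_q\mathfrak{g}$, an injective $\nu$ automatically restricts to an injective map on $C$. All the substance is in the converse.

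For the converse I would first translate both hypotheses into statements about $\ker\nu$. Because $\nu$ is a homomorphism of bialgebras, $\ker\nu$ is at once a two-sided ideal and a coideal, i.e. a bi-ideal of $U_q\mathfrak{g}$; this is the only structural fact about $\nu$ that I will use. Likewise, for a linear map the condition that $\nu$ be injective on $C$ is exactly the condition $\ker\nu \cap C = 0$. The goal is thus to show that a bi-ideal $I = \ker\nu$ with $I \cap C = 0$ must be zero.

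This is immediate from Proposition \ref{bi-ideal}. Arguing by contradiction, suppose $I \neq 0$. Then, reading Proposition \ref{bi-ideal} as a statement about nonzero bi-ideals, $I$ contains $E_i$ and $F_i$ for some index $i$. But $E_i, F_i$ are among the defining elements of the generating coalgebra $C$, so the nonzero element $E_i$ lies in $I \cap C$, contradicting $I \cap C = 0$. Hence $\ker\nu = 0$ and $\nu$ is injective.

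Since the entire argument collapses to an application of the cited proposition, there is no real obstacle to overcome here; the work has already been done in proving Proposition \ref{bi-ideal}. The only points requiring a moment's care are the harmless identification of ``$\nu$ injective on $C$'' with ``$\ker\nu \cap C = 0$'', and the implicit restriction of Proposition \ref{bi-ideal} to nonzero bi-ideals. It is worth emphasizing what the corollary buys over a naive appeal to Proposition \ref{rigid}: the general rigidity result only lets one test injectivity on the full first term $U_0 + \sum_i (U_0 E_i + U_0 F_i)$ of the coradical filtration, whereas the structural input of Proposition \ref{bi-ideal} lets us shrink the test space all the way down to the much smaller generating coalgebra $C$.
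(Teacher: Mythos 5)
Your proof is correct and is exactly the argument the paper intends: the corollary is presented as an immediate consequence of Proposition \ref{bi-ideal}, via the observation that $\ker\nu$ is a bi-ideal and injectivity on $C$ forces $E_i, F_i \notin \ker\nu$ for all $i$. Your two points of care --- reading Proposition \ref{bi-ideal} as a statement about nonzero bi-ideals, and identifying injectivity on $C$ with $\ker\nu \cap C = 0$ --- are the right ones, and your closing remark correctly captures why this strengthens Proposition \ref{rigid}.
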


We will also need the following result from the representation theory of quantized enveloping algebras.  Refer to \cite{pressley} for terminology and proof.  
\begin{proposition}
\label{braidedmodule}
The finite-dimensional type 1 modules of $U_q\mathfrak{g}$ carry a braiding that commutes with the action of $U_q\mathfrak{g}$, where this action is defined on $V \otimes V$ using the comultiplication in $U_q\mathfrak{g}$.
\end{proposition}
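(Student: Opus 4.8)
The plan is to build the braiding out of the universal $R$-matrix of $U_q\mathfrak{g}$, using the fact that although $U_q\mathfrak{g}$ is only quasitriangular in a completed sense, its universal $R$-matrix acts as a genuine operator on any finite-dimensional type 1 module. First I would recall the standard factorization $\mathcal{R} = \mathcal{R}_0\,\Theta$, where $\mathcal{R}_0$ is the diagonal Cartan contribution (acting on a tensor product of weight spaces by the power of $q$ determined by the pairing of the two weights) and $\Theta = \sum_{n \geq 0}\Theta_n$ is the quasi-$R$-matrix, whose degree-$n$ term is assembled from monomials of length $n$ in the $E_i$ tensored against monomials of length $n$ in the $F_i$. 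On a finite-dimensional type 1 module $V$ the $E_i$ act locally nilpotently and $V$ has only finitely many weights, so $\Theta$ acts on $V \otimes V$ as a \emph{finite} sum; hence $\mathcal{R}$ induces a well-defined invertible operator $R_V : V \otimes V \to V \otimes V$. I would then set $\Psi = \tau \circ R_V$, where $\tau$ is the flip on $V \otimes V$.

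Next I would verify the two required properties. For the braid equation, I would invoke the hexagon identities
\[
(\Delta \otimes \id)\mathcal{R} = \mathcal{R}_{13}\mathcal{R}_{23}, \qquad (\id \otimes \Delta)\mathcal{R} = \mathcal{R}_{13}\mathcal{R}_{12},
\]
which are part of the quasitriangular structure; evaluated on $V \otimes V \otimes V$ they force $R_V$ to satisfy the Yang--Baxter equation, from which a routine rewriting yields the braid equation for $\Psi = \tau R_V$. For commutation with the action, the crucial input is the intertwining axiom $\mathcal{R}\,\Delta(x) = \Delta^{\mathrm{op}}(x)\,\mathcal{R}$ for all $x \in U_q\mathfrak{g}$, together with the elementary operator identity $\tau\,\Delta^{\mathrm{op}}(x) = \Delta(x)\,\tau$ (immediate from $\Delta^{\mathrm{op}}(x) = \tau\,\Delta(x)\,\tau$). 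Combining these gives, for every $x$,
\[
\Psi\,\Delta(x) = \tau\,R_V\,\Delta(x) = \tau\,\Delta^{\mathrm{op}}(x)\,R_V = \Delta(x)\,\tau\,R_V = \Delta(x)\,\Psi,
\]
which is precisely the statement that $\Psi$ commutes with the $U_q\mathfrak{g}$-action on $V \otimes V$ induced by the comultiplication.

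The main obstacle is neither verification above — both are formal once the $R$-matrix is available — but rather the existence and well-definedness of the universal $R$-matrix itself, and the guarantee that its action truncates on finite-dimensional modules. A fully self-contained argument would require constructing $\Theta$ explicitly (via the quantum double or Lusztig's bilinear-form approach) and checking the quasitriangularity axioms, which is substantial. Since this material is classical, I would cite \cite{pressley} for the construction of $\mathcal{R}$ and its defining properties, and restrict the argument to the two short derivations above, emphasizing only that the type 1 finite-dimensionality hypothesis is exactly what forces $\Theta$ to act as a finite sum, so that $R_V$, and hence the braiding $\Psi$, is a genuine invertible operator on $V \otimes V$.
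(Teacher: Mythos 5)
Your proposal is correct and is essentially the argument the paper defers to: the paper offers no proof of this proposition beyond citing \cite{pressley}, and your construction $\Psi = \tau \circ R_V$ from the universal $R$-matrix --- hexagon identities for the braid relation, the intertwining axiom $\mathcal{R}\,\Delta(x) = \Delta^{\mathrm{op}}(x)\,\mathcal{R}$ for equivariance, truncation of the quasi-$R$-matrix $\Theta$ on finite-dimensional modules --- is exactly the standard treatment found in that reference. One small adjustment of emphasis: finite-dimensionality alone (for generic $q$) already forces the $E_i$ to act nilpotently and hence truncates $\Theta$, whereas the type 1 hypothesis is what makes the Cartan factor $\mathcal{R}_0$ act by well-defined powers of $q$ on weight vectors; this does not affect the correctness of your argument.
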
 

\subsection{Statement and Proof of Theorem \ref{embed} in the Quantum Case}
Take $C = \mathbb{C}[E_i, F_i, K_i, K_i^{-1}]_{1 \leq i \leq n}$, the coalgebra of generators for $U_q\mathfrak{g}$.  Let $\phi: C \to \End(A)$ be a faithful measuring that preserves the defining relations of $U_q\mathfrak{g}$ in $\End(A)$, that is to say that $A$ can be made into a $U_q\mathfrak{g}$ module.
\begin{theorem}
\label{quantumembed}
\begin{enumerate}
	\item The quantized enveloping algebra $U_q\mathfrak g$ includes in $P(A,A)$ as a measuring 			bialgebra.
	\item Let $e:A \to k$ be an algebra homomorphism, and define $r = e\circ \phi: C \to \Hom		(A,k)$. Suppose additionally that $r$ is injective on $C$.  If $A$ is a bialgebra, then $U_q\mathfrak g$ includes in $P(A,k)$ as a bialgebra.
	\item With $A$, $r$ as above, the map
	\[
	P(1,e):P(A,A) \to P(A,k),
	\]
	generated by $e \circ \pi: P(A, A) \to \Hom(A, k)$, sends $U_q\mathfrak g$ considered as a subalgebra of $P(A,A)$ isomorphically onto its image in $P(A,k)$.
\end{enumerate}
\end{theorem}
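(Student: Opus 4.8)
The plan is to follow the proof of the classical Theorem~\ref{embed} given in Section~\ref{classicalembedproof} almost line for line, substituting the quantum generating coalgebra and the defining relations of $U_q\mathfrak g$ for the Lie bracket relation, and replacing the elementary rigidity of Proposition~\ref{rigid} by its strengthened form. For part~(1), the faithful measuring $\phi:C\to\End(A)$ lifts, by the universal property of Proposition~\ref{meascondition}, to a unique coalgebra map $\rho:C\to P(A,A)$ with $\pi\circ\rho=\phi$; since $\phi$ is injective, so is $\rho$. To promote $\rho$ to a bialgebra map out of $U_q\mathfrak g$ it suffices to know that the images of $E_i,F_i,K_i,K_i^{-1}$ under $\rho$ satisfy, in $P(A,A)$, every defining relation of $U_q\mathfrak g$. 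I would therefore let $J$ be the two-sided ideal of $P(A,A)$ generated by the $\rho$-images of all the defining relations (the $K$--$K$, $K$--$E$, $K$--$F$ and $E$--$F$ relations together with the two families of quantum Serre relations), and aim to show $J=0$.

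That $J$ lies in $\ker\pi$ is immediate: because $A$ is a $U_q\mathfrak g$-module the relations hold among the operators $\phi(E_i),\phi(F_i),\dots$ in $\End(A)$, and since $\pi$ is an algebra homomorphism (Theorem~\ref{univmeasproperties}(3)) with $\pi\circ\rho=\phi$, each generator of $J$, and hence all of $J$, is killed by $\pi$. The point then is that $J$ is also a coideal, so that Lemma~\ref{coideal} forces $J=0$ and the relations genuinely hold in $P(A,A)$; the map $\rho$ then extends multiplicatively and factors through $U_q\mathfrak g$ to give a bialgebra map $\mu:U_q\mathfrak g\to P(A,A)$ with $\mu|_C=\rho$.

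For the coideal property I would avoid the term-by-term computation and argue structurally. By Proposition~\ref{tensorW}(2) the coalgebra $C$ generates a tensor bialgebra $TC$, and $\rho$ extends to a bialgebra map $T\rho:TC\to P(A,A)$ (the free-bialgebra property of $TC$ on $C$); the surjection $TC\to U_q\mathfrak g$ has kernel a bi-ideal $I$, in particular a coideal. The image $T\rho(I)$ is then a coideal (the image of a coideal under a coalgebra map is a coideal), and the ideal it generates is again a coideal (the ideal generated by a coideal in a bialgebra is a coideal, the same elementary fact used in defining $A(R)$); this ideal is precisely $J$. One could instead verify directly that each relation spans a coideal using $\Delta E_i=E_i\otimes K_i+1\otimes E_i$, $\Delta F_i=F_i\otimes 1+K_i^{-1}\otimes F_i$ and $\Delta K_i=K_i\otimes K_i$, but this is laborious for the Serre relations. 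The genuinely new ingredient — and the step I expect to be the main obstacle — is injectivity of $\mu$. In the classical case one simply invokes Proposition~\ref{rigid}, since the first coradical filtration of $U\mathfrak g$ is exactly $C$; for $U_q\mathfrak g$ the coradical filtration is strictly larger than the generating coalgebra, so Proposition~\ref{rigid} does not apply on $C$ alone. This is precisely what the Chin--Musson--M\"uller analysis (Proposition~\ref{bi-ideal}) is for: the strengthened rigidity corollary it yields says that a bialgebra map out of $U_q\mathfrak g$ is injective as soon as it is injective on $C$, and since $\mu|_C=\rho$ is injective, we are done.

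Parts~(2) and~(3) are formally identical to their classical counterparts. For~(2) one repeats the argument with $P(A,A)$ replaced by $P(A,k)$, $\End(A)$ by the convolution algebra $\Hom(A,k)$ (an algebra because $A$ is a bialgebra), and $\phi$ by $r=e\circ\phi$; here $\hat\pi$ is an algebra homomorphism by Theorem~\ref{univmeasproperties}(4), the hypothesis that $r$ is injective on $C$ yields an injective $\hat\rho$, and, exactly as before, the relation ideal is a coideal lying in $\ker\hat\pi$ (using that $A$ is a $U_q\mathfrak g$-module algebra and $e$ an algebra homomorphism), so the $P(A,k)$ analogue of Lemma~\ref{coideal} produces an injective bialgebra map $\hat\mu:U_q\mathfrak g\to P(A,k)$. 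For~(3), both $P(1,e)\circ\mu$ and $\hat\mu$ are bialgebra maps $U_q\mathfrak g\to P(A,k)$, so by the universal property of $U_q\mathfrak g$ they agree once they agree on $C$, and by the uniqueness in the universal property of $P(A,k)$ they agree on $C$ once their composites with $\hat\pi$ agree; but on $C$ one computes $\hat\pi\circ P(1,e)\circ\mu=e\circ\pi\circ\rho=e\circ\phi=r$ and $\hat\pi\circ\hat\mu=r$, so the two maps coincide. Since $\hat\mu$ is injective, $P(1,e)$ carries $\mu(U_q\mathfrak g)$ isomorphically onto its image, which is the assertion of~(3).
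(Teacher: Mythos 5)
Your proposal is correct, and its skeleton is exactly the paper's: the paper's proof of Theorem \ref{quantumembed} is literally the instruction to rerun the classical argument of Section \ref{classicalembedproof}, substituting Corollary \ref{rigidquantized} for Proposition \ref{rigid} and the factoring of relation-preserving maps through $U_q\mathfrak{g}$ for the universal property of $U\mathfrak{g}$ --- which is what you do, and your parts (2) and (3) reproduce the paper's classical counterparts essentially verbatim; you also correctly identify \emph{why} the strengthened rigidity is indispensable (the coradical filtration of $U_q\mathfrak{g}$ strictly contains $C$, so Proposition \ref{rigid} alone cannot close the argument). The one place you genuinely depart is the verification that the relation ideal is a coideal. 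The paper, inheriting the classical proof, relies on checking directly that the span of the relations is a coideal --- which in the quantum case means an unpleasant computation of $\Delta$ on the quantum Serre relations. You instead extend $\rho$ to a bialgebra map $T\rho: TC \to P(A,A)$ out of the bialgebra $TC$ of Proposition \ref{tensorW}, note that the kernel $I$ of the surjection $TC \to U_q\mathfrak{g}$ is a bi-ideal, and push it forward, invoking the paper's own observation that the ideal generated by a coideal in a bialgebra is again a coideal. This is sound: the image of a coideal under a coalgebra map is a coideal, and $\pi \circ T\rho$ kills $I$ because it is the algebra map extending $\phi$, which factors through $U_q\mathfrak{g}$ since $A$ is a $U_q\mathfrak{g}$-module. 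It buys two things: it bypasses the Serre-relation computation entirely, and once Lemma \ref{coideal} forces the pushed-forward ideal to vanish, the factoring of $T\rho$ through $TC/I \cong U_q\mathfrak{g}$ is automatic, so the relation-killing and the construction of $\mu$ come out of a single argument. One small caveat: your parenthetical claim that this ideal ``is precisely $J$'' implicitly uses $\rho(1_C) = 1_{P(A,A)}$ (true, by uniqueness in the universal property applied to the trivial measuring $k \to \Hom(A,A)$), but nothing in the argument actually needs this equality --- any coideal ideal contained in $\ker\pi$ is zero, which is all you use.
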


The proof is the same as in the classical case with two modifications.  First, instead of Proposition $\ref{rigid}$, use Corollary $\ref{rigidquantized}$.  Secondly, instead of the universal property of $U\mathfrak{g}$, use the fact that a map $\phi: C \to \End(A)$ that preserves the defining relations of $U_q\mathfrak{g}$ in $\End(A)$ gives rise to a unique algebra map $\mu: U_q\mathfrak{g} \to \End(A)$ that agrees with $\phi$ on $C$.

\subsection{Examples of Algebras on which $U_q\mathfrak{g}$ measures}

Let $V$ be a module for $U_q\mathfrak{g}$ which is faithful on the generators.  Then by Proposition $\ref{tensorV}$, $U_q\mathfrak{g}$ measures $TV$ to $TV$ and the measuring map is an algebra homomorphism from $U_q\mathfrak{g}$ to $\End(TV)$.  By Theorem \ref{embed}, $U_q\mathfrak{g}$ embeds as a sub-bialgebra
\[
U_q\mathfrak{g} \subset P(TV, TV).
\]
The vector space $V$ has a braiding $R$ by Proposition \ref{braidedmodule} and $U_q\mathfrak{g}$ preserves $(V, R)$.  Thus, in the light of the discussion following \ref{maptoR}, $U_q\mathfrak{g}$ embeds as a sub-bialgebra
\[
U_q\mathfrak{g} \subset P_R(V).
\]
\begin{remark}
The identification of $P_R(V)$ with $A(R)^\circ$ means that $U_q\mathfrak{g}$ embeds in the dual of the FRT bialgebra of \emph{any} representation on which the generators of $U_q\mathfrak{g}$ act faithfully.
\end{remark}

There are also interesting quotient algebras of $TV$, on which $U_q\mathfrak{g}$ measures.  We construct these as follows.  Let $f$ be a complex polynomial, and suppose that $V$ is a vector space with braiding $R$ as above.  Following \cite{frt} define the algebra $\chi_{f, R}$  to be the quotient of the tensor algebra $TV$ by the two-sided ideal generated by $f(R)(V \otimes V)$.
\begin{remark}
This construction is possible for any operator $R: V \otimes V \to V \otimes V$, but choosing $R$ to be a braiding makes it less likely that the quotient will be trivial (see \cite{majidinternational} for details). Note that it might be necessary to choose an appropriate normalization on $R$ to obtain a non-trivial quotient.
\end{remark}

Note that if $\chi_{f, R}$ is non-trivial, then $V \subset \chi_{f, R}$.  By Proposition \ref{quotient}, $U_q\mathfrak{g}$ measures $TV$ to $TV$ and the measuring map is an algebra homomorphism from $U_q\mathfrak{g}$ to $\End(\chi_{f, R})$.  Thus, as long as $\chi_{f, R}$ is non-trivial, $U_q\mathfrak{g}$ embeds as a bialgebra
\[
U_q\mathfrak{g} \subset P(\chi_{f, R}, \chi_{f, R}).
\]

We list a few examples of non-trivial $\chi_{f, R}$.

\begin{example}Vector representation of $U_q\mathfrak{sl_n}$.

The vector representation $V$ of $U_q\mathfrak{sl_n}$ is $n$-dimensional and is the quantum version of the defining representation of $\mathfrak{sl_n}$.  Let $e_{i,j}$ be the $n \times n$ matrix with a $1$ in the $(i,j)$ position and $0$ elsewhere.  We can choose a basis such that V is given by:
\begin{multline*}
K_i = q^{-1} e_{i,i} + q e_{i+1,i+1} + \sum_{k \not= i, i+1} e_{k,k},\\
\shoveleft{\begin{aligned}
E_i &= e_{i+1,i}, & F_i &= e_{i,i+1}.
\end{aligned}}
\end{multline*}

The associated braiding (appropriately normalized) is:
\[
R = q \sum_{i} (e_{ii} \otimes e_{ii}) + \sum_{i \not= j} (e_{ii} \otimes e_{jj}) + (q - q^{-1}) \sum_{i < j} (e_{ij} \otimes e_{ji})
\]

With polynomial $x - q$ and this braiding, one obtains $\Sym_q\mathfrak{sl_n}$, generated by $1, x_1, ... x_n$ with relations:
\[
x_i x_j = q x_j x_i, \qquad i < j
\]
and with polynomial $x + q^{-1}$, one obtains $\bigwedge_q{\mathfrak{sl_n}}$ with the same generators and new relations:
\[
x_i^2 = 0 \qquad \text{and} \qquad x_i x_j = -q^{-1} x_j x_i, \qquad i < j.
\]
\end{example}

\begin{example}Symmetric algebra of $U_q\mathfrak{sp_4}$.

While the symmetric and exterior algebras of $U_q\mathfrak{sl_n}$ are very simple q-deformations of their classical counterparts, other quantized enveloping algebras give rise to more complex symmetric algebras.  As an interesting low-dimensional example, the algebra $\Sym_q\mathfrak{sp_4}$ is generated by $1, x_1, x_2, x_3,$ and $x_4$, subject to the relations:
\begin{align*}
x_1 x_2 &= q x_2 x_1, & x_1 x_3 &= q x_3 x_1,\\
x_2 x_4 &= q x_4 x_2, & x_3 x_4 &= q x_4 x_3,\\
x_1 x_4 &= q^2 x_4 x_1, & x_2 x_3 &= q^2 x_3 x_2 + (q - q^{-1}) x_1 x_4
\end{align*}
\end{example}

In fact, as in the above examples, for all classical simple $\mathfrak{g}$, one can use the vector representation to generate $\Sym_q\mathfrak{g}$ and $\bigwedge_q\mathfrak{g}$, which share many of the properties of symmetric and exterior algebras \cite{klimyk}, and it is clear from the above discussion that
\begin{align*}
&U_q\mathfrak{g} \subset P(\Sym_q\mathfrak{g}, \Sym_q\mathfrak{g}), && U_q\mathfrak{g} \subset P(\bigwedge_q\mathfrak{g}, \bigwedge_q\mathfrak{g}).
\end{align*}
As $\bigwedge_q\mathfrak{g}$ is finite-dimensional, it becomes readily apparent that one can find finite-dimensional $A$, such that $U_q\mathfrak{g}$ embeds in $P(A, A)$ for any semi-simple $\mathfrak{g}$ composed of classical simple Lie algebras, by using direct sums of appropriately chosen quantum exterior algebras.

\begin{example}Adjoint representation of $U_q\mathfrak{sl_2}$.

Other examples with rich structure can be found by using representations other than the vector representation.  For example, using the three-dimensional (``adjoint") type $\bm{1}$ representation of $U_q\mathfrak{sl_2}$ and taking an appropriate normalization of the braid matrix and a polynomial $f = x + q^{-2}$, one obtains for $\chi_{f, R}$ the algebra generated by $1, x_1, x_2, x_3$ with relations:
\begin{align*}
x_1 x_1 &= x_3 x_3 = 0, & x_2 x_2 &= \frac{q^2 - 1}{q + q^{-1}} x_1 x_3,\\
x_1 x_2 &= -q^{2} x_2 x_1, & x_2 x_3 &= -q^2 x_2 x_3, & x_1 x_3 = - x_3 x_1,\\
\end{align*}
which is better known as the quantum Lie algebra of $\mathfrak{sl_2}$ \cite{delius}.
\end{example}

\subsection{The quantized function algebras}

The algebras $\chi_{f, R}$ provide analogues of symmetric and exterior algebras of the (quantized) linear space on which a quantized enveloping algebra acts.  To complete the parallel with corollary \ref{evaluation}, we need an appropriate replacement for $C^{\infty}(G)$.  The following result motivates the definition of the bialgebra which will serve in this capacity.

\begin{definition}
With $G$ a connected, simply connected Lie group with Lie algebra $\mathfrak{g}$, the \emph{quantized function algebra} on $G$, denoted by $\mathbf{C}_q[G]$, is the Hopf subalgebra of the Hopf dual $(U_q\mathfrak{g})^\circ$ spanned by the matrix coefficients of all finite-dimensional type $\bm{1} \, U_q\mathfrak{g}$-modules.
\end{definition}

The dual pairing of bialgebras $\langle U_q\mathfrak{g}, \mathbf{C}_q[G] \rangle$ is derived from the pairing $\langle U_q\mathfrak{g}, U_q\mathfrak{g}^\circ  \rangle$.

This pairing provides a measuring map $\phi: U_q\mathfrak{g} \to \End(\mathbf{C}_q[G])$ which is also an algebra homomorphism, given by:
\[ 
\phi(u)(c) = \sum_{(c)} c_{(1)} \langle u, c_{(2)} \rangle \qquad u \in U_q\mathfrak{g}, c \in \mathbf{C}_q[G].
\]

This measuring is faithful on the generators of $U_q\mathfrak{g}$ and thus provides an embedding 
\[
U_q\mathfrak{g} \subset P(\mathbf{C}_q[G], \mathbf{C}_q[G]).
\]
Evaluating an element of $\mathbf{C}_q[G]$ on the identity of $U_q\mathfrak{g}$ provides an algebra homomorphism $e: \mathbf{C}_q[G] \to \mathbb{C}$ and composing $e \circ \phi: U_q\mathfrak{g} \to \Hom(\mathbf{C}_q[G], \mathbb{C})$ yields the dual pairing $\langle U_q\mathfrak{g}, \mathbf{C}_q[G] \rangle$.  This is certainly faithful on the generators of $U_q\mathfrak{g}$, so there is an embedding
\[
U_q\mathfrak{g} \subset P(\mathbf{C}_q[G], \mathbb{C}).
\]
An isomorphism between these embeddings is provided by the map:
\[
P(1,e): P(\mathbf{C}_q[G], \mathbf{C}_q[G]) \to P(\mathbf{C}_q[G], \mathbb{C}),
\]
as in Theorem \ref{quantumembed}.

\appendix 

\section{The universal measuring coalgebra}
\label{universalmeasuringcoalgebra}

\subsection{Construction of the universal measuring coalgebra}
While the linear dual of a coalgebra is an algebra, the full linear dual of an algebra is not necessarily a coalgebra: the categories of coalgebras and algebras are not equivalent.  In particular, coalgebras have a finiteness property with no corresponding property for algebras.  This finiteness property is critical for the construction of the universal enveloping algebra.

\begin{theorem}\label{finite} Every element of a coalgebra is contained in a finite dimensional subcoalgebra.
\end{theorem}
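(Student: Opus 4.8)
The plan is to produce, for a given $c \in C$, an explicit finite-dimensional subspace $D \subseteq C$ that contains $c$ and satisfies $\Delta(D) \subseteq D \otimes D$; such a $D$ is automatically a subcoalgebra, since coassociativity and the counit axioms are inherited from $C$. The only genuine input is the finiteness already present in the comultiplication: $\Delta c$, and hence $\Delta^2 c := (\Delta \otimes \id)\Delta c = (\id \otimes \Delta)\Delta c$, is a \emph{finite} sum of simple tensors. Collecting all the left-hand and right-hand tensor legs appearing in $\Delta^2 c$ into a finite-dimensional subspace of $C$ and choosing a basis, I can write
\[
\Delta^2 c = \sum_{i,j} e_i \otimes x_{ij} \otimes f_j,
\]
where the families $\{e_i\}$ and $\{f_j\}$ are each linearly independent and $x_{ij} \in C$. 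I then set $D = \operatorname{span}\{x_{ij}\}$, which is finite-dimensional by construction.

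That $c \in D$ is immediate from the counit axioms: applying $\epsilon$ to the first and third tensor factors gives $c = (\epsilon \otimes \id \otimes \epsilon)\Delta^2 c = \sum_{i,j}\epsilon(e_i)\epsilon(f_j)\,x_{ij} \in D$.

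The substantive step is closure, $\Delta(D) \subseteq D \otimes D$. Since $\{e_i\}$ and $\{f_j\}$ are linearly independent, I choose dual functionals $e_i^*, f_j^* \in C^*$ with $e_i^*(e_k) = \delta_{ik}$ and $f_j^*(f_l) = \delta_{jl}$, so that $x_{ij} = (e_i^* \otimes \id \otimes f_j^*)\Delta^2 c$ and therefore $\Delta x_{ij} = (e_i^* \otimes \id \otimes \id \otimes f_j^*)\Delta^3 c$, where $\Delta^3 c \in C^{\otimes 4}$ is the unique threefold coproduct. The idea is to evaluate $\Delta^3 c$ in two ways that are equal by coassociativity. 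Comultiplying the first tensor factor of $\Delta^2 c$ gives $\Delta^3 c = \sum_{i,j}\Delta e_i \otimes x_{ij}\otimes f_j$; applying $e_a^* \otimes \id \otimes \id \otimes f_b^*$ and using $f_b^*(f_j) = \delta_{jb}$ leaves $\Delta x_{ab} = \sum_i p_{ai}\otimes x_{ib}$ with $p_{ai} = (e_a^*\otimes\id)\Delta e_i$, so that $\Delta x_{ab} \in C \otimes D$. Comultiplying instead the third tensor factor gives, symmetrically, $\Delta x_{ab} = \sum_j x_{aj}\otimes q_{jb} \in D \otimes C$. Hence $\Delta x_{ab}$ lies in $(C \otimes D)\cap(D \otimes C)$, which equals $D \otimes D$ (choose a complement $C = D \oplus D'$ and decompose $C \otimes C$ accordingly). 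This establishes $\Delta(D)\subseteq D\otimes D$ and completes the argument.

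The main obstacle is precisely this closure step. A single one-sided computation only traps $\Delta x_{ij}$ in $C \otimes D$ (or in $D \otimes C$), and it is tempting but false to expect a clean ``matrix'' coproduct $\Delta x_{ij} = \sum_k x_{ik}\otimes x_{kj}$ for an arbitrary separated presentation. The correct device is to run both one-sided computations and intersect, invoking the elementary identity $(C \otimes D)\cap(D \otimes C) = D \otimes D$. Everything else — the existence of the doubly separated form, the counit bookkeeping for $c \in D$, and the coassociativity identifications of $\Delta^3 c$ — is routine.
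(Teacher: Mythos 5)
Your proof is correct. The paper itself gives no argument for this theorem---it simply cites Sweedler and Montgomery---and your proof (writing $\Delta^2 c = \sum_{i,j} e_i \otimes x_{ij} \otimes f_j$ with linearly independent outer legs, setting $D = \operatorname{span}\{x_{ij}\}$, recovering $c \in D$ via the counit, and trapping $\Delta x_{ab}$ in $(C \otimes D) \cap (D \otimes C) = D \otimes D$ by computing $\Delta^3 c$ two ways) is precisely the classical proof found in those cited references, presented completely and with the one genuinely delicate point, the intersection identity, correctly isolated and justified.
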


See Sweedler \cite{sweedler} or Montgomery \cite{montgomery} for a proof of this and other elementary coalgebraic constructions.

In outline the construction of $P(A,B)$ is as follows.

Consider the set of all finite dimensional measuring coalgebras, $ {C_\mu ,\sigma_\mu } $.  It is not difficult to verify that the coproduct  $ \coprod_\mu C_\mu$ is also a measuring coalgebra.  Moreover, if $ \rho :C_\lambda \to C_{\lambda '} $ is a measuring map, then there are two maps

\[ f,g: C_\lambda \to \coprod_\mu C_\mu\]

with $f$ being the inclusion of $C_\lambda $ in the $ \coprod_\mu C_\mu $ and $g$ being $\rho$ followed by the inclusion of $C_{\lambda'} $ in $ \coprod_\mu C_\mu $.The \emph{universal measuring coalgebra} $P(A,B)$ is then the co-equalizer
\[P(A,B) \to \coprod_\lambda C_\lambda\Longrightarrow^g_f \coprod_\mu C_\mu \].

The image of $P(A,k)$ is also referred to as the \emph{dual coalgebra of A} or the \emph{finite dual} of $A$, and is denoted by $A^\circ$.  

Most of the claims in \ref{univmeasproperties} follow easily from the universal property.  The third part follows from an inspection of the following diagram.  Suppose that $\Hom(A,B)$ is an algebra, and that $H$ is a bialgebra with a measuring map $\sigma :H \to \Hom(A,B)$ which is also an algebra homomorphism.   

\begin{center}
\setlength{\unitlength}{1mm}
\begin{picture}(110,60)

	\put(8,55){\makebox(0,0){$H\otimes H$}}
	\put(8,05){\makebox(0,0){$H$}}

	\put(30,20){\makebox(0,0){$P(A,B)$}}
	\put(30,40){\makebox(0,0){$P(A,B)\otimes P(A,B)$}}
	\put(80,40){\makebox(0,0){$\Hom(A,B)\otimes \Hom(A,B)$}}
	\put(80,20){\makebox(0,0){$\Hom(A,B)$}}
	
	\put(8,50){\vector(0,-1){40}}
	\put(20,55){\vector(4,-1){40}}
	\put(20,7){\vector(4,1){40}}

	\put(30,35){\vector(0,-1){10}}
	\put(50,40){\vector(1,0){5}}
	\put(80,35){\vector(0,-1){10}}
	\put(40,20){\vector(1,0){30}}
	
	\put(15,10){\vector(4,3){10}}
	\put(15,52){\vector(4,-3){10}}
	
	\put(45,53){\makebox(0,0){$\sigma\otimes \sigma$}}
	\put(25,30){\makebox(0,0){$m$}}
	\put(42,10){\makebox(0,0){$\sigma$}}
	\put(6,30){\makebox(0,0){$\mu$}}
	\put(17,15){\makebox(0,0){$\rho$}}
	\put(25,49){\makebox(0,0){$\rho \otimes  \rho$}}
	
	\put(55,23){\makebox(0,0){$\pi$}}
	
	\put(77,30){\makebox(0,0){$\mu$}}
\end{picture}	

\end{center}

The map $\mu \circ \sigma \otimes \sigma$ measures, and hence the map $m$ defining multiplication on $P(A,B)$ exists and is unique by the universal property of $P(A,B)$.

\begin{corollary}
\begin{enumerate}
	\item $P(A,k)$ includes in $\Hom(A,k)$
	\item Let $A$ and $H$ be bialgebras, and suppose that  $A$ is an $H$-comodule algebra.  Then $P(H,k) = H^\circ$ is a measuring coalgebra for the pair $(A,A)$.
\end{enumerate}
\end{corollary}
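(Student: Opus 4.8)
The plan is to treat the two parts separately, both built on the universal property proved above and on the identification of measuring maps into $k$ with coalgebra maps into the finite dual. For the first part I would observe that a linear map $\sigma:C\to\Hom(A,k)$ satisfies the measuring conditions exactly when it is a coalgebra homomorphism into $A^\circ$, the finite dual carrying the comultiplication dual to multiplication in $A$ and the counit $f\mapsto f(1_A)$. The first measuring identity reads $\mu_A^*\,\sigma(c)=(\sigma\otimes\sigma)\Delta(c)$, so $\mu_A^*\sigma(c)$ lies in the image of $\sigma\otimes\sigma$; by Theorem \ref{finite} every $c$ sits in a finite-dimensional subcoalgebra, so this image is finite-dimensional, which forces $\sigma(c)\in A^\circ$ and makes $\sigma$ a coalgebra map. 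Conversely every coalgebra map into $A^\circ$ measures. Hence $A^\circ$, with its inclusion $\iota:A^\circ\hookrightarrow\Hom(A,k)$, is a measuring coalgebra, and the bijection just described, natural in $C$, exhibits it as the \emph{final} one; by uniqueness of final objects $P(A,k)=A^\circ$ and $\pi=\iota$ is injective, which is part 1 (and re-proves part 5 of Theorem \ref{univmeasproperties}).

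For the second part I would produce the measuring map directly from the comodule structure. Write $\delta:A\to A\otimes H$, $\delta(a)=\sum_{(a)}a_{(0)}\otimes a_{(1)}$, for the coaction; since $A$ is an $H$-comodule algebra this $\delta$ is an algebra homomorphism. Define $\sigma:P(H,k)=H^\circ\to\End(A)$ by
\[
\sigma(\xi)(a)=(\mathrm{id}_A\otimes\xi)\,\delta(a)=\sum_{(a)}a_{(0)}\,\langle\xi,a_{(1)}\rangle,\qquad \xi\in H^\circ,\ a\in A.
\]
To check the measuring conditions I would use $\delta(aa')=\delta(a)\delta(a')$ together with the fact that the comultiplication of $H^\circ$ is the restriction of $\mu_H^*$, i.e.\ $\langle\xi,hh'\rangle=\sum_{(\xi)}\langle\xi_{(1)},h\rangle\langle\xi_{(2)},h'\rangle$. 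A direct computation then gives
\[
\sigma(\xi)(aa')=\sum a_{(0)}a'_{(0)}\,\langle\xi,a_{(1)}a'_{(1)}\rangle=\sum_{(\xi)}\sigma(\xi_{(1)})(a)\,\sigma(\xi_{(2)})(a'),
\]
while $\delta(1_A)=1_A\otimes1_H$ and $\epsilon_{H^\circ}(\xi)=\langle\xi,1_H\rangle$ yield $\sigma(\xi)(1_A)=\epsilon(\xi)1_A$. Thus $\sigma$ measures and $H^\circ=P(H,k)$ is a measuring coalgebra for $(A,A)$.

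Both verifications are essentially Sweedler-notation bookkeeping, so I do not expect a genuine difficulty; the one place that needs care --- and the closest thing to an obstacle --- is that the decomposition $\sum_{(\xi)}\xi_{(1)}\otimes\xi_{(2)}$ used in the measuring identity only makes sense because $\xi$ lies in $H^\circ$ rather than the full dual $H^*$, so the argument must stay on $H^\circ$ and invoke the finiteness of Theorem \ref{finite} that guarantees $\mu_H^*$ lands in $H^\circ\otimes H^\circ$ there. Once that is kept straight, the measuring identities match term by term.
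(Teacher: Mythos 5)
Your proposal is correct, but the two halves fare differently against the paper. For part 2 you reproduce the paper's proof essentially verbatim: the paper defines the same map $p(\alpha)(a)=\sum_{(a)}a_{(0)}\alpha(a_{(1)})$ and performs the same Sweedler computation, using that the coaction is an algebra homomorphism and that comultiplication in $H^\circ$ is dual to multiplication in $H$; your only additions are the unit check $\sigma(\xi)(1_A)=\epsilon(\xi)1_A$, which the paper leaves tacit, and the closing remark that the decomposition $\sum_{(\xi)}\xi_{(1)}\otimes\xi_{(2)}$ only exists because $\xi\in H^\circ$ rather than $H^*$, which is exactly the right point of care. For part 1, however, your route genuinely differs. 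The paper argues via cofinite ideals: a finite-dimensional subcoalgebra $C\subset P(A,k)$ dualizes the measuring to an algebra homomorphism $A\to C^*$ with cofinite kernel $J$, so $C$ lands in $(A/J)^*$, and Theorem \ref{finite} exhausts $P(A,k)$ by such $C$, giving $P(A,k)\subset\bigcup_J(A/J)^*\subset\Hom(A,k)$. You instead use the other standard characterization of the finite dual, $A^\circ=\{f\in A^*:\mu_A^*f\in A^*\otimes A^*\}$, observe that the measuring identity says precisely $\mu_A^*\sigma(c)=(\sigma\otimes\sigma)\Delta(c)$ (a finite sum, so the appeal to Theorem \ref{finite} is not even needed at that step), and conclude that measuring maps into $\Hom(A,k)$ are exactly coalgebra maps into $A^\circ$; finality of $A^\circ$ then gives $P(A,k)=A^\circ$ with the universal map equal to the inclusion. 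Your version buys a more transparent proof of injectivity (the paper's final line, ``$P(A,k)\subset = (A/J)^*$'', is garbled at precisely this point) and re-proves part 5 of Theorem \ref{univmeasproperties} in the same stroke; the paper's version buys the concrete realization of $P(A,k)$ as a union of duals of finite-dimensional quotient algebras, which matches its coequalizer construction from finite-dimensional pieces. The one step you should make explicit is that $\mu_A^*f$ lies in $A^\circ\otimes A^\circ$, not merely in $A^*\otimes A^*$, for $f\in A^\circ$, so that $A^\circ$ is itself a coalgebra and your correspondence really is between measuring maps and coalgebra maps; this is the standard lemma from \cite{montgomery} that your argument silently assumes.
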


\begin{proof}

For 1, observe that $P(A,k) \subset \bigcup (A/J)^*$ where $J$ is a cofinite ideal in $A$.  Moreover, if $C$ is a finite dimensional subcoalgebra of $P(A,k)$, then there is an algebra homomorphism from $A$ to the finite dimensional algebra $C^*$, with cofinite kernel J. Since $P(A,k)$ is the union of its finite dimensional subcoalgebras, $P(A,k) \subset = (A/J)^*$.

For 2, observe that $H^\circ$ is a coalgebra with a map
\[
p: H^\circ \to \Hom(A,A), \qquad  p(\alpha)(a) = \sum_{(a)} a_{(0)} \alpha (a_{(1)}).
\]
 
Then
\begin{align*}
p(\alpha)(aa')  =  & \sum_{(a)(a')} a_{(0)}a'_{(0)}\alpha ( a_{(1)}a'_{(1)})  \\
		       =  &\sum_{(a)(a')(\alpha)} a_{(0)}a'_{(0)}\alpha_{(1)} (a_{(1)})\alpha_{(2)}(a'_{(1)}).
\end{align*}

But

\[
\sum_{(\alpha)}p(\alpha_{(1)}(a))p(\alpha_{(2)}(a'))  = \sum_{(\alpha) (a) (a')} a_{(0)}\alpha_{(1)}(a_{(1)})a'_{(0)}\alpha_{(2)}(a'_{(1)}),
\]

so $p$ measures, giving $H^\circ$ the structure of a measuring coalgebra as required.

\end{proof}

Majid \cite{majid} works with an arrows reversed version of this construction.  Instead of coalgebras measuring on algebras, one considers algebras comeasuring on algebras.  The category of comeasuring algebras for a pair of algebras $(A, B)$ does not have an intial object for arbitrary algebras $A$ and $B$, but when this universal comeasuring algebra does exist, Majid denotes it by $M(A, B)$.  This object is naturally dual to the universal measuring coalgebra $P(A, B)$, in that there is a natural isomorphism:
\begin{equation*}
M(A, B)^\circ \cong P(A, B).
\end{equation*}
This isomorphism is proved by noting that, as every element in a coalgebra is contained in a finite-dimensional coalgebra (Theorem \ref{finite}), it suffices to prove that $M(A, B)^\circ$ has the property \ref{meascondition} for every finite-dimensional $C$ measuring $A$ to $B$.  Then $C^\star$ is a comeasuring from $A$ to $B$, and, because $C^{\star \circ} \cong C$, the result follows from the universal property of $M(A, B)$.

\clearpage
\bibliography{thomasbatchelor_final.bib}

\providecommand{\bysame}{\leavevmode\hbox to3em{\hrulefill}\thinspace}
\providecommand{\MR}{\relax\ifhmode\unskip\space\fi MR }
\providecommand{\MRhref}[2]{%
  \href{http://www.ams.org/mathscinet-getitem?mr=#1}{#2}
}
\providecommand{\href}[2]{#2}
\begin{thebibliography}{10}

\bibitem{marjmanifolds}
Marjorie Batchelor, \emph{In search of the graded manifold of maps between
  graded manifolds}, Complex Differential Geometry and Supermanifolds in
  Strings and Fields (P.J.M. Bongaarts and R.~Martini, eds.), Lecture Notes in
  Physics, vol. 311, Springer-Verlag, 1988, pp.~62--113.

\bibitem{marjrapallo}
\bysame, \emph{Measuring coalgebras, quantum group-like objects, and
  non-commutative geometry}, Differential Geometric Methods in Theoretical
  Physics (C~Bartocci, U~Bruzzo, and R.~Cianci, eds.), Lecture Notes in
  Physics, vol. 375, Springer-Verlag, 1991, pp.~47--60.

\bibitem{marjadvances}
\bysame, \emph{Difference operators, measuring coalgebras, and quantum
  group-like objects}, Advances in Mathematics \textbf{105} (1994), no.~2,
  190--218.

\bibitem{pressley}
Vyjayanthi Chari and Andrew Pressley, \emph{A guide to quantum groups},
  Cambridge University Press, 1994.

\bibitem{musson}
W.~Chin and I.~M. Musson, \emph{The coradical filtration for quantized
  enveloping algebras}, Journal of the London Mathematical Society \textbf{53}
  (1996), no.~1, 50--62.

\bibitem{mussoncorr}
\bysame, \emph{The coradical filtration for quantized enveloping algebras,
  corrigenda}, Journal of the London Mathematical Society \textbf{61} (2000),
  no.~2, 319--320.

\bibitem{delius}
Gustav~W. Delius, Mark~D. Gould, Andreas H{\"u}ffmann, and Yao-Zhong Zhang,
  \emph{Quantum lie algebras associated to ${U}_q(gl_n)$ and ${U}_q(sl_n)$},
  Journal of Physics A \textbf{29} (1996), 5611--5617.

\bibitem{klimyk}
Anatoli Klimyk and Konrad Schm{\"u}dgen, \emph{Quantum groups and their
  representations}, Springer-Verlag, 1997.

\bibitem{kostant}
Bertram Kostant, \emph{Graded manifolds, graded lie groups and
  prequantisation}, Differential Geometry in Mathematical Physics, Springer
  Lecture Notes in Mathematics, vol. 570, Springer-Verlag, 1977, pp.~177--306.

\bibitem{frt}
L.~A.~Takhtajan L.~D.~Faddeev, N. Yu.~Reshetikhin, \emph{Quantization of lie
  groups and lie algebras (eng. transl.)}, Leningrad Mathematics Journal
  \textbf{1} (1990), 193--225.

\bibitem{majidinternational}
Shahn Majid, \emph{Quasitriangular hopf algebras and yang-baxter equations},
  International Journal of Modern Physics A \textbf{5} (1990), no.~1, 1--91.

\bibitem{majidpoincare}
\bysame, \emph{Braided momentum in the q-poincare group}, Journal of
  Mathematical Physics \textbf{34} (1993), 2045--2058.

\bibitem{majidreview}
\bysame, \emph{Algebras and hopf algebras in braided categories}, Advances in
  Hopf Algebras (Susan Montgomery and Jeffrey Bergen, eds.), Lecture Notes in
  Pure and Applied Mathematics, vol. 158, Marcel Dekker, 1994, pp.~55--105.

\bibitem{majid}
\bysame, \emph{Quantum and braided diffeomorphism groups}, Journal of Geometry
  and Physics \textbf{28} (1998), no.~1, 94--128.

\bibitem{montgomery}
Susan Montgomery, \emph{Hopf algebras and their action on rings}, CBMS Lecture
  Notes, vol.~82, American Mathematical Society, 1993.

\bibitem{muller}
Eric M{\"u}ller, \emph{Some topics on frobenius-lusztig kernels, i}, Journal of
  Algebra \textbf{206} (1998), no.~2, 624--658.

\bibitem{sweedler}
Moss Sweedler, \emph{Hopf algebras}, Benjamin, 1969.

\end{thebibliography}
\bibliographystyle{amsplain}

\end{document}